 \def\qed{\unskip\quad \hbox{\vrule\vbox to 6pt {\hrule width
          4pt\vfill\hrule}\vrule} }
\newcommand{\bez}{\nopagebreak\hspace*{\fill}
          \nolinebreak$\qed$\vspace{2ex}\par}
\newenvironment{proof}{\vspace{1ex}
    \vspace*{10mm}\vspace{-10mm}\par\noindent{\bf Proof.}\nopagebreak
     \par}
     {\nopagebreak\linebreak[0]\hspace*{\fill} $\qed$\pagebreak[0]\vspace{2ex}\par}
\newtheorem{Th}{Theorem}
\newtheorem{Prop}{Proposition}
\newtheorem{Lemma}{Lemma}
\newtheorem{Remark}{Remark}
\newtheorem{Cor}{Corollary}
\newcommand{\tor}{{\Bbb{T}}}
\newcommand{\cs}{{\cal S}}
\newcommand{\ck}{{\cal K}}
\newcommand{\cb}{{\cal B}}
\newcommand{\ca}{{\cal A}}
\newcommand{\cc}{{\cal C}}
\newcommand{\R}{{\Bbb{R}}}
\newcommand{\T}{{\Bbb{T}}}
\newcommand{\C}{{\Bbb{C}}}
\newcommand{\Z}{{\Bbb{Z}}}
\newcommand{\N}{{\Bbb{N}}}
\newcommand{\xbm}{(X,{\cal B},\mu)}
\newcommand{\ycn}{(Y,{\cal C},\nu)}
\newcommand{\zde}{(Z,{\cal D},\eta)}
\newcommand{\tfs}{T_{\varphi,{\cal S}}}
\newcommand{\stfs}{\sigma_{T_{\varphi,{\cal S}}}}
\newcommand{\lf}{\Lambda_{\varphi}}
\newcommand{\svf}{\Sigma_{\varphi}}
\newcommand{\vhf}{V_{\chi\circ\varphi}}
\newcommand{\ov}{\overline}
\newcommand{\til}{\widetilde}
\newcommand{\beq}{\begin{equation}}
\newcommand{\eeq}{\end{equation}}
\newcommand{\vep}{\varepsilon}
\newcommand{\va}{\varphi}
\newcommand{\si}{\sigma}
\newcommand{\ot}{\otimes}
\newcommand{\la}{\lambda}
\begin{document}

\title{Lifting mixing properties by Rokhlin cocycles}
\author{M. Lema\'nczyk\thanks{Research
partly supported by  Polish MNiSzW grant N N201 384834} \and F.
Parreau}
\date{}

\maketitle


\begin{abstract} We study the problem of lifting
various mixing properties from a base automorphism $T\in {\rm
Aut}\xbm$ to skew products of the form $\tfs$, where $\va:X\to G$
is a cocycle with values in a locally compact Abelian group $G$,
$\cs=(S_g)_{g\in G}$ is a measurable representation of $G$ in
${\rm Aut}\ycn$ and $\tfs$ acts on the product space
 $(X\times Y,\cb\ot\cc,\mu\ot\nu)$ by
$$\tfs(x,y)=(Tx,S_{\va(x)}(y)).$$
It is also shown that whenever $T$ is ergodic (mildly mixing,
mixing) but $\tfs$ is not ergodic (is not mildly mixing, not
mixing), then on a non-trivial factor $\ca\subset\cc$ of $\cs$ the
corresponding Rokhlin cocycle $x\mapsto S_{\va(x)}|_{\ca}$ is a
coboundary (a quasi-coboundary).
\end{abstract}

\section*{Introduction} Given an ergodic automorphism $T$ of a
standard Borel space $\xbm$ we can study various extensions
$\widetilde{T}$ of it. Among such extensions a special role is
played by so called compact group extensions or, more generally,
isometric extensions (see \cite{Fu3}, \cite{Gl} and~\cite{Zi}). In
particular, one can ask which ergodic properties of $T$ are lifted
by isometric extensions. The two papers\footnote{In \cite{Ru1} it
is proved that Bernoullicity is lifted whenever the extension is
weakly mixing, while in \cite{Ru2} it is shown that mixing
(multiple mixing) lifts whenever the extension is weakly mixing.}
by Dan Rudolph \cite{Ru1} and \cite{Ru2} are beautiful examples of
the mechanism that once the extension enjoys some ``minimal''
ergodic property then it shares some strong ergodic properties
assumed to hold for its base. By iterating the procedure of taking
isometric extensions we can hence lift ergodic properties of $T$
to weakly mixing  distal extensions of it.

The notion complementary to distality is relative weak mixing
\cite{Fu3}, \cite{Gl},  \cite{Zi} and a natural question arises
what happens with lifting ergodic properties from $T$ to
$\widetilde{T}$ when $\widetilde{T}$ is relatively weakly mixing
over the factor $T$. This, by Abramov-Rokhlin's theorem
\cite{Ab-Ro}, leads to the study of so called Rokhlin cocycle
extensions which are automorphisms of the form
$\widetilde{T}=T_\Theta$ acting on $(X\times
Y,\cb\ot\cc,\mu\ot\nu)$ by the formula
$$
T_\Theta(x,y)=(Tx,\Theta_x(y)),$$ where $\Theta:X\to{\rm Aut}\ycn$
is measurable\footnote{The map $\Theta$ is often called a Rokhlin
cocycle.}. Since the above formula describes all possible
(ergodic) extensions of $T$, it is hard to expect interesting
theorems on such a level of generality -- one has to specify
subclasses of Rokhlin cocycles for which one can obtain some
results. We will focus on the following class.

Let $G$ be a second countable locally compact Abelian (LCA) group.
Assume that we have a measurable action $\cs$ of this group given
by $g\mapsto S_g\in \mbox{Aut}\ycn$. Let $\va:X\to G$ be a
cocycle. The automorphism $\tfs$ acting on $(X\times Y,{\cal
B}\otimes{\cal C},\mu\otimes\nu)$ given by
$$
\tfs(x,y)=(Tx,S_{\va(x)}(y))
$$
will be called the {\em Rokhlin $(\va,\cs)$-extension}\footnote{We
would like to emphasize that, as noticed in \cite{Da-Le}, if we
admit $G$ to be non-Abelian locally compact, then each ergodic
extension $\widetilde{T}=T_{\Theta}$ is of the form $\tfs$; more
specifically, a general Rokhlin cocycle $x\mapsto\Theta(x)$ is
cohomologous to a cocycle $x\mapsto S_{\va(x)}$ for some $G,\va$
and $\cs$.} of $T$.

A systematic study of the problem of lifting ergodic properties
from $T$ to $\tfs$ was originated by D. Rudolph in \cite{Ru3}.
Since then, extensions $\tfs\to T$  have been studied in numerous
papers, see  {\em e.g.}  \cite{Da-Le}, \cite{Gl0}, \cite{Gl},
\cite{Gl-We}, \cite{Le-Le}, \cite{Le-Pa}, \cite{Ro} and~\cite{Ry}.

The present paper is a  continuation of investigations from
\cite{Le-Le} and \cite{Le-Pa}, and, due to a new approach
presented here, makes them complete. This new approach is based on
a harmonic analysis result from \cite{Ho-Me-Pa}, and it consists
in showing that given an action $\cs=(S_g)_{g\in G}$ of a second
countable LCA group $G$ on a probability standard Borel space
$\ycn$ and a saturated Borel subgroup $\Lambda\subset\widehat{G}$,
the spectral space of functions in $L^2\ycn$ whose spectral
measures are concentrated on $\Lambda$ is the $L^2$-space of an
$\cs$-invariant sub-$\sigma$-algebra ${\cal A}\subset{\cal C}$  (a
measure-theoretic factor of $\cs$). This will systematically be
used in our study because the group of $L^\infty$-eigenvalues of
the Mackey $G$-action associated to $T$ and $\va$ is saturated and
hence yields an $\cs$-factor.

Using that we will prove natural necessary and sufficient
conditions for weak mixing of $\tfs$ and relative weak mixing of
$\tfs$ over $T$. We also compute possible eigenvalues of $\tfs$
and determine the relative Kronecker factor whenever $\tfs$ is
ergodic. The idea of a factor determined by a saturated group
allows us to prove that if $T$ is ergodic but $\tfs$ is not, then
the Rokhlin cocycle $x\mapsto S_{\va(x)}|_{\ca}$ is a coboundary
as a cocycle taking values in ${\rm Aut}(\ca)$, where $\ca$ is the
non-trivial factor of $\cs$ corresponding to the above-mentioned
eigenvalue group. Finally, by replacing coboundary by
quasi-coboundary, a similar conclusion is achieved when $T$
is mildly mixing but $\tfs$ is not, and when $T$ is mixing but
$\tfs$ is not.

Another tool explored here is a use of mixing sequences of
weighted unitary operators, that is, of operators on $L^2\xbm$
given by the formula
$$
f\mapsto \xi\cdot f\circ T\;\;\mbox{for each $f\in L^2\xbm$}
$$
determined by a measurable $\xi:X\to\T$ and an automorphism $T$.
This, in particular, will solve the problem of lifting mild mixing
property, and complete the picture from \cite{Le-Pa} of lifting
mixing and multiple mixing.

\section{Preliminaries}
We briefly recall basic definitions, some known results and fix
notation for the rest of the paper.

\subsection{Self-joinings of an  automorphism, relative concepts}
\label{jeden} Assume that $T$ is an  automorphism of a standard
probability Borel space $\xbm$, which we denote $T\in{\rm
Aut}\xbm$ \footnote{We shall also denote  by $T$ the unitary
operator $f\mapsto f\circ T$ on $L^2\xbm$.}. Denote by $J(T)$  the
set of self-joinings of $T$, that means the set of $T\times
T$-invariant probability measures on $(X\times X,{\cal
B}\otimes{\cal B})$ whose both marginals are equal to $\mu$. To
each self-joining $\eta\in J(T)$ one associates a Markov operator
\footnote{A linear bounded operator $\Phi$ of $L^2\xbm$ is called
{\em Markov} if $\Phi(1)=1=\Phi^\ast(1)$ and $\Phi f\geq0$
whenever $f\geq0$. Notice also that we always have $\|\Phi_\eta
f\|\leq \|f\|$ and thus $\|\Phi_\eta\|=1$.} $\Phi_\eta$ of
$L^2\xbm$ given by
$$
\int_X \Phi_\eta f(y)g(y)\,d\mu(y)=\int_{X\times
X}f(x)g(y)\,d\eta(x,y)$$ for each $f,g\in L^2\xbm$. Moreover, the
$T\times T$--invariance of $\eta$ means that \beq\label{mar2}
\Phi_\eta \circ T=T\circ \Phi_\eta. \eeq On the other hand each
Markov operator $\Phi$ on $L^2\xbm$ for which (\ref{mar2}) holds
determines a self-joining $\eta_\Phi$ by the formula
$$\eta_\Phi(A\times B)=\int_B\Phi(1_A)\,d\mu$$ for each $A,B\in\cb$.
Then \beq\label{odp} \Phi=\Phi_{\eta_\Phi}\;\mbox{ and
}\;\eta=\eta_{\Phi_\eta}.\eeq Therefore the set $J(T)$ can
naturally be identified with the set ${\cal J}(T)$ of Markov
operators on $L^2\xbm$ satisfying (\ref{mar2}). The set ${\cal
J}(T)$ is a closed subset in the weak operator topology and hence
it is compact. Thus
$$
\Phi_{n}\to\Phi \;\;\mbox{iff}\;\;
\langle\Phi_{n}f,g\rangle\to\langle\Phi f,g\rangle\;\;\mbox{for
each } f,g\in L^2\xbm.$$ By transferring the weak operator
topology via~(\ref{odp}) we obtain the weak topology on $J(T)$ and
$$
\eta_n\to\eta \;\;\mbox{iff}\;\; \eta_n(A\times B)\to\eta(A\times
B)\;\;\mbox{for each } A, B\in{\cal B}.$$ Since the composition of
two Markov operators is Markov, ${\cal J}(T)$ is a compact
semitopological semigroup. By the same token, $J(T)$ is also a
compact semitopological semigroup
($\eta_1\circ\eta_2:=\eta_{\Phi_{\eta_1}\circ\Phi_{\eta_2}}$).

Given a {\em factor} \footnote{Up to a little abuse of notation,
we define the {\em factor system} $T|_{\cal
A}:\;(X/\ca,\ca,\mu|_{\ca})\to(X/\ca,\ca,\mu|_{\ca})$ in which
cosets $\ov{x}\in X/\ca$ are given by those points which cannot be
distinguished by the sets from $\ca$; then $T|_{\cal
A}(\ov{x})=\ov{Tx}$.}
, {\em i.e.\ }a $T$-invariant sub-$\sigma$-algebra ${\cal
A}\subset{\cal B}$, let
$$
\mu=\int_{X/{\cal A}}\delta_{\ov{x}}\otimes
\mu_{\ov{x}}\,d\mu(\ov{x})$$ be the disintegration of $\mu$ over
the factor $\cal A$. By setting
$$
\mu\otimes_{\cal A}\mu=\int_{X/{\cal A}}\delta_{\ov{x}}\otimes
\mu_{\ov{x}}\ot\mu_{\ov{x}}\;d\mu(\ov{x}).
$$
we obtain a self-joining $\mu\otimes_{\cal A}\mu$ which is often
called the {\em relative product over} $\ca$. Note that
$\mu\ot_{\ca}\mu|_{\ca\ot\ca}=\Delta_{\cal A}$, where
$\Delta_{\ca}(A_1\times A_2)=\mu(A_1\cap A_2)$ for each
$A_1,A_2\in\ca$. Moreover, we have $\Phi_{\mu\otimes_{\cal A}\mu}=
E(\cdot|{\cal A})$.

Assume additionally that $T$ is ergodic. Then we can speak about
ergodic self-joinings of $T$ and the set of such joinings will be
denoted by $J^e(T)$. By ${\cal J}^e(T)$ we denote the subset of
${\cal J}(T)$ corresponding to $J^e(T)$. The elements of ${\cal
J}^e(T)$ are exactly the extremal points in the natural simplex
structure of ${\cal J}(T)$. Recall that $T$ is said to be {\em
relatively weakly mixing} over a factor $\cal A$ if $E(\cdot|{\cal
A})\in{\cal J}^e(T)$.

The notion which is complementary to relative weak mixing is the
concept of relative Kronecker factor \cite{Fu3}, \cite{Zi}. More
precisely, if $\ca$ is a factor then the {\em relative Kronecker
factor} $\ck(\ca)$ (of $T$ over $T|_{\ca}$) is the smallest
$\sigma$-algebra making all relative eigenfunctions\footnote{By a
{\em relative} (with respect to $\ca$) {\em eigenvalue} of $T$ one
means an $\ca$-measurable map $c:(X/\ca,\ca,\mu|_{\ca})\to U(n)$
for which there is  $M:\xbm\to\C^n$ satisfying the following:
\beq\label{def1} c(\ov{x}) \left(\begin{array}{c}
M_1(x)\\
\cdots\\
M_n(x)\end{array}\right)= \left(\begin{array}{c}
M_1(Tx)\\
\cdots\\
M_n(Tx)\end{array}\right) \;\;\mbox{for a.e.}\;x\in X, \eeq
\beq\label{def2} M_i\perp_{\ca}M_j\;\;\mbox{for}\;i\neq
j\;\;\mbox{and}\; E(|M_{i}|^2|\ca)=1,\; i,j=1,\ldots,n. \eeq The
map $M$ satisfying~(\ref{def1}) and~(\ref{def2}) is called a {\em
relative eigenfunction corresponding to} $c$.} measurable
($\ca\subset\ck(\ca)$).

For more about joinings or relative concepts in ergodic theory,
see e.g.\ \cite{Fu3}, \cite{Gl}, \cite{Ka-Th}, \cite{Ry} and
\cite{Zi}.

\subsection{$G$-actions}
Assume that $G$ is a second countable LCA group. By a $G$-action
$\cs=(S_g)_{g\in G}$ we mean a {\em measurable representation} of
$G$ on a probability standard Borel space $\ycn$, that is a group
homomorphism $g\mapsto S_g$, $G\to {\rm Aut}\ycn$. Then we also
denote by  $\cs=(S_g)_{g\in G}$ the associated unitary
representation of $G$ on $L^2\ycn$, which is continuous.
 For each $f\in L^2\ycn$, by $\sigma_{f,\cs}$ (or
$\sigma_f$ is $\cs$ is understood) we denote the {\em spectral
measure} of $f$, {\em i.e.} the measure on the character group
$\widehat{G}$ \footnote{Since $G$ is second countable LCA, also
$\widehat{G}$ is second countable LCA.} determined by the Fourier
transform\footnote{By Pontryagin Duality Theorem, the character
group of $\widehat{G}$ has a natural identification with $G$.}
$$
\widehat{\sigma}_{f,\cs}(g):
=\int_{\widehat{G}}\chi(g)\,d\sigma_{f,\cs}(\chi)= \int_Yf\circ
S_g\cdot\ov{f}\,d\nu.
$$
We denote $G(f)=\ov{\rm span}\{S_gf:g\in G\}$. Then the
correspondence $f\to 1_{\widehat{G}}$ yields the canonical
isomorphism of $\cs|_{G(f)}$ with the representation ${\cal
V}_{\sigma_f}=(V^{\sigma_f}_g)_{g\in G}$ of
$L^2(\widehat{G},\cb(\widehat{G}),\sigma_f)$, where
$V^{\sigma_f}_gj(\chi)=\chi(g)j(\chi)$. The maximal spectral type
of $\cs$ on $L^2_0\ycn$ (the subspace of zero mean function in
$L^2\ycn$) will be denoted by $\sigma_{\cs}$ \footnote{Formally
speaking, it is the class of equivalence of measures which are
maximal spectral measures but in what follows we abuse the
vocabulary and often speak about a given measure as the maximal
spectral type. }.

For more about the spectral theory of $G$-actions, see {\em e.g.}
\cite{Ka-Th}, \cite{Le}.

Suppose that $\cs_i=(S_g^{(i)})_{g\in G}$ is a $G$-action on
$(Y_i,{\cal C}_i,\nu_i)$, $i=1,2$. By a {\em joining} of these two
$G$-actions we mean an $(S^{(1)}_g\times S_g^{(2)})_{g\in
G}$-invariant measure on $(Y_1\times Y_2,{\cal C}_1\otimes {\cal
C}_2)$ with projections $\nu_1$ and $\nu_2$
respectively\footnote{Slightly generalizing Section~\ref{jeden},
$\eta$  determines a Markov intertwining operator
$\Phi_\eta:L^2(Y_1,\cc_1,\nu_1)\to L^2(Y_2,\cc_2,\nu_2)$; the
correspondence similar to~(\ref{odp}) also takes place.}. Recall
that $\cs_1$ and $\cs_2$ are called {\em disjoint} (in the sense
of Furstenberg \cite{Fu}) if the only possible joining between
them is product measure. We then write $\cs_1\perp\cs_2$. It is
well-known \cite{Ha-Pa} that
$$
\sigma_{\cs_1}\perp\sigma_{\cs_2}\Rightarrow \cs_1\perp\cs_2.
$$

Denote by $M(\widehat{G})$ the convolution Banach algebra of all
complex Borel measures on $\widehat{G}$ \footnote{Since
$\widehat{G}$ is Polish, all members of $M(\widehat{G})$ are
regular measures.}. Let $M^+(\widehat{G})\subset M(\widehat{G})$
(respectively $M^{+,1}(\widehat{G})\subset M(\widehat{G})$)
consists of nonnegative members of $M(\widehat{G})$ (of all
probability measures in $M(\widehat{G})$).

Assume that $G$ is not compact. Recall that $\sigma\in
M^+(\widehat{G})$ is called {\em Dirichlet} if
$\limsup_{g\to\infty}|\widehat{\sigma}(g)|=\sigma(\widehat{G})$
(or equivalently if there exists a sequence $g_n\to\infty$ in $G$
such that $\widehat{\sigma}(g_n)\to \sigma(\widehat{G})$).

\subsection{$G$-valued cocycles for an ergodic automorphism}
Assume that $T\in{\rm Aut}\xbm$. Let $G$ be a second countable LCA
group\footnote{Here and all over the paper we use multiplicative
notation.}. Let $\va:X\to G$ be measurable. It determines a
cocycle $\va(n,x)=\va^{(n)}(x)$\footnote{$\va(\cdot,\cdot)$
satisfies the cocycle identity
$\va(m+n,\cdot)=\va(m,\cdot)\cdot\va(n,T^m\cdot)$; it is often
$\va$ itself which is called a cocycle. A cocycle $\va:X\to G$ is
called a {\em coboundary} if $\va=f/f\circ T$ for a measurable
$f:X\to G$. If two cocycles differ by a coboundary then they are
called {\em cohomologous}. A cocycle is said to be a {\em
quasi-coboundary} if it is cohomologous to a constant cocycle.} by
the following formula
$$ \va^{(n)}(x)=\left\{\begin{array}{ll}
\va(x)\cdot\va(Tx)\cdot\ldots\cdot\va(T^{n-1}x) &\mbox{if } n>0\\
1 &\mbox{if }  n=0\\
(\va(T^{n}x)\cdot\ldots\cdot \va(T^{-1}x))^{-1} &\mbox{if } n<0.
\end{array}\right.
$$
Let us recall now the definitions of two groups related to $T$ and
$\va$ that play basic role in the study of Rokhlin cocycle
extensions (studied in Section~\ref{TRZY}).

{\bf The group} $\lf$: This is a Borel subgroup of $\widehat{G}$
defined as
$$
\lf=\{\chi\in\widehat{G}:\:\chi\circ\va=\xi/\xi\circ T\;\mbox{for
a measurable}\;\xi:X\to\tor\}\footnote{We denote
$\tor=\{z\in\C:\:|z|=1\}$.}.
$$
This group turns out to be the group of $L^\infty$-eigenvalues of
the Mackey action (of $G$) associated to the cocycle $\va$ (see
{\em e.g.} \cite{Aa}, \cite{Ha-Os}, \cite{Ka}, \cite{Le-Pa}).

{\bf The group} $\svf$: This is a Borel subgroup of $\widehat{G}$
defined as
$$
\svf=\{\chi\in\widehat{G}:\:\chi\circ\va=c\cdot\xi/\xi\circ
T\;\mbox{for a measurable}\;\xi:X\to\tor\;\mbox{and}\;c\in\tor\}.
$$

\section{Tools}\label{DWA}
In this section we will present tools that will be needed to prove
lifting of various properties by Rokhlin cocycles (see
Section~\ref{TRZY}). Some of the results that will be presented
here  are new and seem to be of independent interest (see
Section~\ref{dwajeden}).

\subsection{Idempotents in $J(T)$}
Assume that $T\in{\rm Aut}\xbm$. Then the  closure of the group
$\{T^j:\:j\in \Z\}$ in ${\cal J}(T)$, denoted by $\ov{\{T^j:\:j\in
\Z\}}$, is a closed subsemigroup of ${\cal J}(T)$ and therefore
\beq\label{mar4} \ov{\{T^j:\:j\in \Z \}}\: \mbox{\em is a
semitopological compact semigroup.} \eeq

Given a factor ${\cal A}\subset{\cal B}$, we have $E(\cdot|{\cal
A})=\Phi_{\mu\otimes_{\cal A}\mu}$. Notice that given $\Phi\in
{\cal J}(T)$, \beq\label{mar5} \Phi\circ E(\cdot|{\cal
A})=E(\cdot|{\cal A})\;\mbox{if and only if}\; \Phi f=f\;\mbox{for
each}\;f\in L^2({\cal A}). \eeq It follows that \beq\label{mar6}
\Phi \circ E(\cdot|{\cal A})=E(\cdot|{\cal A})\; \mbox{if and only
if}\; \eta_\Phi|_{{\cal A}\otimes{\cal A}}=\Delta_{\cal A}. \eeq
Indeed, assume $\eta_\Phi|_{{\cal A}\otimes{\cal A}}= \Delta_{\cal
A}$, fix $f\in L^2({\cal A})$ and let $g\in L^2({\cal A})$ be
arbitrary. Then
$$
\int_X \Phi f(y)g(y)\,d\mu(y)= \int_{X\times
X}f(x)g(y)\,d\eta_\Phi(x,y)= \int_Xfg\,d\mu.
$$
Since $g$ was arbitrary in $L^2({\cal A})$, $\Phi f-f$ is
orthogonal to $L^2({\cal A})$. But we must have $\|\Phi
f\|\leq\|f\|$, and thus $\Phi f=f$. Conversely, if $\Phi f=f$ for
each $f\in L^2({\cal A})$, we get the same equalities for all $f$,
$g\in L^2({\cal A})$, whence $\eta_\Phi|_{{\cal A}\otimes{\cal
A}}=\Delta_{\cal A}$. Now~(\ref{mar6}) follows from~(\ref{mar5}).

In view of (\ref{mar6}) we obtain that \beq\label{mar7} \{\Phi\in
{\cal J}(T):\:\eta_\Phi|_{{\cal A}\otimes{\cal A}}=\Delta_{\cal
A}\}\:\mbox{\em is a compact semitopological semigroup.} \eeq

\begin{Lemma}\label{mar8}
Assume that $\eta\in J(T)$. Then
\begin{multline}
L^2({\cal B}\otimes \{\emptyset,X\},\eta)\cap
L^2(\{\emptyset,X\}\otimes{\cal B},\eta)\\
= \{f\ot1:\:f\in L^2\xbm,\: \|\Phi_\eta f\|=\| f\|\}.
\end{multline}
\end{Lemma}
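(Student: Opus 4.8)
The plan is to first make both sides of the claimed identity concrete. The sub-$\sigma$-algebra $\cb\ot\{\emptyset,X\}$ of $\cb\ot\cb$ consists (mod $\eta$) of the sets $A\times X$ with $A\in\cb$, so $L^2(\cb\ot\{\emptyset,X\},\eta)$ is exactly the range of the embedding $f\mapsto f\ot 1$ (i.e.\ $(x,y)\mapsto f(x)$) of $L^2\xbm$, and this embedding is isometric because the first marginal of $\eta$ is $\mu$; symmetrically $L^2(\{\emptyset,X\}\ot\cb,\eta)$ is the isometric range of $g\mapsto 1\ot g$. Hence an element $h$ of the intersection is precisely one that can be written both as $f\ot 1$ and as $1\ot g$ in $L^2(\eta)$ for some $f,g\in L^2\xbm$, and the lemma reduces to: $f\ot 1=1\ot g$ in $L^2(\eta)$ for some $g$ if and only if $\|\Phi_\eta f\|=\|f\|$, in which case one may, and in fact must, take $g=\Phi_\eta f$. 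Throughout I will use the identity $\langle f\ot 1,\,1\ot g\rangle_{L^2(\eta)}=\langle\Phi_\eta f,g\rangle_{L^2\xbm}$ --- which is just the defining relation of $\Phi_\eta$ rewritten with a complex conjugate --- together with the contraction property $\|\Phi_\eta\|\le 1$.

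For the inclusion $\supseteq$, I would take $f$ with $\|\Phi_\eta f\|=\|f\|$, set $g=\Phi_\eta f$, and simply expand
\[
\|f\ot 1-1\ot g\|_{L^2(\eta)}^2=\|f\|^2-2\,{\rm Re}\,\langle f\ot 1,1\ot g\rangle_{L^2(\eta)}+\|g\|^2=\|f\|^2-2\|\Phi_\eta f\|^2+\|\Phi_\eta f\|^2=\|f\|^2-\|\Phi_\eta f\|^2=0 .
\]
Thus $f\ot 1=1\ot g$ in $L^2(\eta)$, so $f\ot 1$ lies in both one-sided subspaces and hence in the intersection.

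For $\subseteq$, let $h=f\ot 1=1\ot g$. Pairing $h$ with $1\ot g$ two ways gives $\langle\Phi_\eta f,g\rangle=\|1\ot g\|_{L^2(\eta)}^2=\|g\|^2$ (the last step because the second marginal of $\eta$ is $\mu$), and pairing $h$ with $f\ot 1$ two ways gives $\overline{\langle\Phi_\eta f,g\rangle}=\|f\ot 1\|_{L^2(\eta)}^2=\|f\|^2$. So $\langle\Phi_\eta f,g\rangle$ is real and equal to both $\|f\|^2$ and $\|g\|^2$; then Cauchy--Schwarz plus $\|\Phi_\eta\|\le1$ gives
\[
\|f\|^2=\langle\Phi_\eta f,g\rangle\le\|\Phi_\eta f\|\,\|g\|\le\|f\|\,\|g\|=\|f\|^2 ,
\]
forcing equality everywhere; in particular $\|\Phi_\eta f\|\,\|g\|=\|f\|\,\|g\|$, which yields $\|\Phi_\eta f\|=\|f\|$ when $f\neq 0$, the case $f=0$ being trivial.

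I do not expect a real obstacle: the argument is exactly the equality case of Cauchy--Schwarz for the contraction $\Phi_\eta$. The only points requiring care are the two pieces of book-keeping --- identifying each one-sided subspace with a copy of $L^2\xbm$ through the marginals of $\eta$, and passing cleanly between the bilinear defining relation for $\Phi_\eta$ and the sesquilinear inner product --- together with the observation (implicit in the $\subseteq$ direction) that whenever $f\ot 1$ admits a representative depending only on the second coordinate, that representative is necessarily $\Phi_\eta f$.
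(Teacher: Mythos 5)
Your proof is correct and rests on the same mechanism as the paper's: the identity $\langle f\ot 1,\,1\ot g\rangle_{L^2(\eta)}=\langle\Phi_\eta f,g\rangle$, i.e.\ the fact that $1\ot\Phi_\eta f$ is the orthogonal projection of $f\ot1$ onto $L^2(\{\emptyset,X\}\ot\cb,\eta)$, combined with the equality case of the contraction property. The paper invokes the projection property directly (norm equality under a projection forces membership in the subspace, and $f\ot1=1\ot g$ forces $g=\Phi_\eta f$), whereas you write out the same content as an explicit expansion of $\|f\ot1-1\ot\Phi_\eta f\|^2$ and an equality case of Cauchy--Schwarz; the two arguments are essentially identical.
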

\begin{proof}
Suppose that $f(x)=g(y)$ for $\eta$--a.e.\ $(x,y)\in X\times X$.
Then we have $\Phi_\eta f=g$ and
$$
\| f\|^2=\int_{X\times X}|f(x)|^2\,d\eta(x,y)= \int_{X\times
X}|g(y)|^2\,d\eta(x,y)=\| g\|^2,$$ so $\|\Phi_\eta f\|=\|f\|$.

On the other hand, take  $f\in L^2\xbm$ satisfying $\|\Phi_\eta
f\|=\|f\|$. Then similarly
$$\int_{X\times X}|f(x)|^2\,d\eta(x,y)=
\int_{X\times X}|\Phi_\eta f(y)|^2\,d\eta(x,y).$$ But, immediately
from the definition, the function $(x,y)\mapsto \Phi_\eta f(y)$ is
the orthogonal projection of $(x,y)\mapsto f(x)$ on the subspace
$L^2(\{\emptyset,X\}\otimes {\cal B})$ of $L^2(X\times X,{\cal
B}\otimes{\cal B},\eta)$. It follows that $f(x)=\Phi_\eta f(y)$
$\eta$-a.e.\ $(x,y)$.
\end{proof}
The $\sigma$-algebra ${\cal B}\otimes \{\emptyset,X\}\cap
\{X,\emptyset\}\otimes{\cal B}$ (modulo $\eta$) can  be seen on
one hand as a factor ${\cal B}_1(\eta)\ot\{\emptyset,X\}$ of
${\cal B}\otimes \{\emptyset,X\}$ and on the other hand as a
factor $\{\emptyset,X\}\ot{\cal B}_2(\eta)$ of
$\{\emptyset,X\}\otimes{\cal B}$. This defines two factors
$\cb_1(\eta)$, $\cb_2(\eta)$ of $\xbm$, the largest factors
identified by the joining $\eta$.

Whenever ${\cal A}\subset{\cal B}$ is a factor of $T$, the
relative product $\mu\otimes_{\cal A}\mu$ is an idempotent in
$J(T)$. The following result states that this is the only way to
obtain idempotents in $J(T)$ (cf.\ Theorem 6.9 in \cite{Gl} where
self-adjoint idempotents of ${\cal J}(T)$ are shown to correspond
to factors).

\begin{Prop}\label{idemp}
Assume that $\eta$ is an idempotent in $J(T)$. Then there exists a
factor ${\cal A}$ of $T$ such that $\eta=\mu\otimes_{\cal A}\mu$.
\end{Prop}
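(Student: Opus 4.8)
The plan is to work with the Markov operator $\Phi=\Phi_\eta$ and exploit the fact, recorded in the excerpt, that idempotency of $\eta$ is equivalent to $\Phi^2=\Phi$. First I would show that $\Phi$ is automatically self-adjoint. The operator $\Phi$ is a Markov operator commuting with $T$, with $\|\Phi\|=1$, and $\Phi^2=\Phi$. For any such contractive idempotent on a Hilbert space, $\mathrm{range}(\Phi)=\mathrm{ker}(I-\Phi)$ and $\mathrm{ker}(\Phi)$ are complementary; contractivity of the idempotent forces the decomposition to be orthogonal (if $\|\Phi\|\le 1$ and $\Phi^2=\Phi$ then $\Phi$ is an orthogonal projection — this is the standard fact that a norm-one idempotent on Hilbert space is self-adjoint). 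Hence $\Phi=\Phi^\ast$, and then by Theorem~6.9 in~\cite{Gl} (cited just before the Proposition), self-adjoint idempotents in ${\cal J}(T)$ correspond to factors, i.e.\ $\Phi=E(\cdot|{\cal A})$ for a $T$-invariant sub-$\sigma$-algebra ${\cal A}\subset{\cal B}$, which via~(\ref{odp}) gives $\eta=\eta_{\Phi}=\mu\otimes_{\cal A}\mu$.

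If one prefers a self-contained argument not invoking~\cite{Gl}, here is the route I would take instead. Let $H=\{f\in L^2\xbm:\Phi f=f\}$; since $\Phi$ commutes with $T$, $H$ is $T$-invariant, contains the constants, and is closed under complex conjugation (because $\Phi$ is Markov, hence real-positivity-preserving, so $\Phi\bar f=\overline{\Phi f}$). The key point is that $H$ is closed under multiplication — more precisely, that $H=L^2({\cal A})$ for the sub-$\sigma$-algebra ${\cal A}$ generated by (bounded functions in) $H$. To see that, I would first establish $\Phi=\Phi^\ast$ as above, so that $H=\mathrm{range}(\Phi)$ and $L^2\xbm=H\oplus\mathrm{ker}(\Phi)$ orthogonally. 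Then for bounded $f,g\in H$ one checks $\Phi(fg)=fg$: writing $f=\Phi f$, one uses that $\Phi$, being a self-adjoint Markov idempotent, is a conditional-expectation-type operator — concretely, $\langle\Phi(fg),h\rangle=\langle fg,\Phi h\rangle$ for all $h$, and one shows $\Phi(fg)$ and $fg$ have the same inner product against every element of $H$ and that $\Phi(fg)\perp\mathrm{ker}(\Phi)$ forces the jump; the cleanest way is the module property: for $f\in L^\infty\cap H$ and any $h\in L^2$, $\Phi(f\cdot\Phi h)=f\cdot\Phi h$ and $\Phi(f\cdot h)=f\cdot\Phi h$, which one proves by first taking $h\in H$ and $h\in\mathrm{ker}\Phi$ separately using positivity and the contraction property (an extremality/equality-in-Cauchy--Schwarz argument, exactly in the spirit of the proof of Lemma~\ref{mar8}). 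Granting the module property, $H\cap L^\infty$ is a conjugation-closed, $T$-invariant subalgebra of $L^\infty$ containing constants, hence equals $L^\infty({\cal A})$ for a $T$-invariant ${\cal A}\subset{\cal B}$, and $H=L^2({\cal A})$. Finally $\Phi$ is the orthogonal projection onto $L^2({\cal A})$, which is precisely $E(\cdot|{\cal A})$, so $\eta=\mu\otimes_{\cal A}\mu$ by~(\ref{odp}) and the identity $\Phi_{\mu\otimes_{\cal A}\mu}=E(\cdot|{\cal A})$ recalled in Section~\ref{jeden}.

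The main obstacle is the module (multiplicativity) property $\Phi(f\cdot g)=f\cdot\Phi g$ for $f\in L^\infty({\cal A})$: a Markov contractive idempotent need not a priori be an algebra homomorphism on its range, and one must genuinely use positivity together with $\Phi^2=\Phi$ and $\|\Phi\|=1$. The trick is to reduce to showing that for a real bounded $f$ with $\Phi f=f$ one has $\Phi(f^2)=f^2$: indeed $\Phi(f^2)\ge(\Phi f)^2=f^2$ pointwise (Markov operators satisfy a Cauchy--Schwarz/Jensen inequality $\Phi(f^2)\ge(\Phi f)^2$ when $\Phi 1=1$), while $\int\Phi(f^2)\,d\mu=\int f^2\,d\mu$ since $\Phi^\ast 1=1$; hence $\Phi(f^2)=f^2$ a.e.\ Polarizing gives $H\cap L^\infty$ closed under products, and the module identity for general $h$ then follows by decomposing $h$ along $H\oplus\mathrm{ker}\Phi$ and noting $\Phi(f\cdot k)\perp$ (again an equality-in-the-contraction argument) when $k\in\mathrm{ker}\Phi$. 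Once self-adjointness of $\Phi$ and this Jensen inequality are in hand, everything else is routine, and in the written proof I would expect the authors either to cite~\cite{Gl} directly or to run exactly this short self-adjoint-idempotent argument.
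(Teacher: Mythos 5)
Your argument is correct, and its first step coincides with the paper's: both proofs begin by observing that a norm-one idempotent on a Hilbert space is necessarily an orthogonal projection, so $\Phi_\eta=\Phi_\eta^\ast$. Where you diverge is in identifying the range of this projection as the $L^2$-space of a factor. You either outsource this to Theorem~6.9 of \cite{Gl} (which the authors themselves cite just before the statement, so that route is certainly legitimate) or rebuild it from scratch via the Jensen inequality $\Phi(f^2)\geq(\Phi f)^2$, the trace identity $\int\Phi(f^2)\,d\mu=\int f^2\,d\mu$, and polarization, showing the fixed space is a conjugation-closed $T$-invariant subalgebra. The paper instead gets this step almost for free from Lemma~\ref{mar8}, which it has already proved: an orthogonal projection is an isometry exactly on its range, and Lemma~\ref{mar8} identifies the isometry locus $\{f:\|\Phi_\eta f\|=\|f\|\}$ with $L^2({\cal B}_1(\eta))$, the $L^2$-space of the largest factor identified by the joining $\eta$; hence $\Phi_\eta=E(\cdot|{\cal B}_1(\eta))$ immediately. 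Your self-contained version buys independence from both \cite{Gl} and Lemma~\ref{mar8} and exhibits the general principle that a self-adjoint Markov idempotent is a conditional expectation, but it is longer and has one small loose end: you should justify that bounded elements of the fixed space $H$ are dense in $H$ (e.g.\ by checking that $\Phi$ commutes with truncation on its fixed space, or by working with the $\sigma$-algebra ${\cal B}(\eta)=\{A:\eta((A\times X)\triangle(X\times A))=0\}$ as the paper does in the discussion following the proposition), so that the subalgebra you construct really generates all of $H$ and not just a closed subspace of it.
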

\begin{proof}
Since $\|\Phi_\eta\|=1$, it must be an orthogonal projection and
it is an isometry exactly on its range. Now, in view of
Lemma~\ref{mar8}, $\Phi_\eta$ is an isometry exactly on $L^2({\cal
B}_1(\eta))$. Therefore it is the orthogonal projection onto
$L^2({\cal B}_1(\eta))$, that is $\Phi_\eta=E(\cdot|{\cal
B}_1(\eta))$ and the result follows.
\end{proof}

We can see factors of the form ${\cal B}_1(\eta)$ in a different
way. Indeed, given $\eta\in J(T)$ define
$$
{\cal B}(\eta):=\{A\in{\cal B}:\: \eta\bigl((A\times X)\triangle
(X\times A)\bigr)=0\}.
$$
Then $L^2({\cal B}(\eta))=\{f\in L^2\xbm:\:\Phi_\eta f=f\}$.
Indeed, from the von Neumann theorem for contractions,
$\frac1N\sum_{n=0}^{N-1}\Phi^n_\eta\to {\rm proj}_{{\rm
Fix}(\Phi_\eta)}$ and since the limit is an idempotent and a
Markov operator, it is the orthogonal projection on the
$L^2$--space of a factor.

Recall also that if $W$ is a contraction of a Hilbert space $H$
then so is its adjoint and then $W^\ast Wf=f$ if and only if
$\|Wf\|=\|f\|$. Hence for any $\eta\in J(T)$,
$${\cal B}_1(\eta)={\cal B}(\eta^\ast\circ\eta),$$
 where $\eta^\ast:=\eta_{\Phi^\ast}$.

An automorphism $T$ is said to be {\em rigid} if there exists a
sequence $q_n\to\infty$ such that $T^{q_n}\to$Id in the strong
(or, which here is the same, in the weak) operator topology. We
then say that $(q_n)$ is a {\em rigidity} sequence for $T$.
Suppose now that $\cal A$ is a non-trivial factor of $T$ and
suppose moreover that $(q_n)$ is a rigidity sequence for $T|_{\cal
A}$. Consider
\begin{multline}
I({\cal A}):=\{\Phi\in{\cal J}(T):\:
\eta_\Phi|_{{\cal A}\ot{\cal A}}=\Delta_{\cal A}\\
\:\mbox{and}\;\;\Phi\;\mbox{is a limit point of}\;
{\{T^j:\:j\in\Z\}}\}.
\end{multline}
Note that $I({\cal A})$ is non-empty since any limit Markov
operator of the set $\{T^{q_n}:\:n\geq1\}$ belongs to it. It
follows from~(\ref{mar6}) and~(\ref{mar7}) that $I({\cal A})$ is a
closed subsemigroup of ${\cal J}(T)$, hence a semitopological
compact semigroup. We recall (see {\em e.g.}\ \cite{Gl1}, p.\ 6,
Lemma 2.2) that each compact semitopological semigroup contains an
idempotent. Now, using Proposition~\ref{idemp}, we obtain the
following.

\begin{Prop}\label{idemp1}
Assume that $T$ is an automorphism of $\xbm$ and let ${\cal
A}\subset{\cal B}$ be a non-trivial rigid factor of $T$. Then
there exist a factor ${\cal A}'$ containing ${\cal A}$ and a
rigidity sequence $(q_n)$ for $T|_{{\cal A}'}$ such that
$T^{q_n}\to E(\cdot|{\cal A}')$. \bez
\end{Prop}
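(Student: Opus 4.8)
The plan is to build the factor $\ca'$ and the rigidity sequence by a compactness argument inside the semigroup $I(\ca)$, exactly as prepared by the preceding paragraph. First I would fix a rigidity sequence $(p_n)$ for $T|_\ca$, so that $T^{p_n}\to{\rm Id}$ on $L^2(\ca)$; then any weak-operator limit point $\Phi_0$ of the sequence $\{T^{p_n}:n\ge1\}$ in $\mathcal{J}(T)$ satisfies $\eta_{\Phi_0}|_{\ca\ot\ca}=\Delta_\ca$ (because on $L^2(\ca)$ it is the limit of the $T^{p_n}$, hence the identity, and then one invokes~(\ref{mar6})), and $\Phi_0$ is by construction a limit point of $\{T^j:j\in\Z\}$. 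Thus $\Phi_0\in I(\ca)$ and $I(\ca)\ne\emptyset$. By~(\ref{mar6}) and~(\ref{mar7}) the set $\{\Phi:\eta_\Phi|_{\ca\ot\ca}=\Delta_\ca\}$ is a compact semitopological semigroup, and intersecting it with the closed subsemigroup $\ov{\{T^j:j\in\Z\}}$ from~(\ref{mar4}) shows $I(\ca)$ is itself a compact semitopological semigroup (one should check it really is closed under composition: if $\eta_{\Phi},\eta_{\Psi}$ both restrict to $\Delta_\ca$ then so does $\eta_{\Phi\circ\Psi}$ by~(\ref{mar6}), and a product of two limit points of $\{T^j\}$ is again such a limit point since $\ov{\{T^j\}}$ is a semigroup).

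Next I would apply the cited fact that every compact semitopological semigroup contains an idempotent (\cite{Gl1}, Lemma~2.2) to $I(\ca)$, obtaining an idempotent $\eta\in I(\ca)$. By Proposition~\ref{idemp} there is a factor $\ca'$ of $T$ with $\eta=\mu\ot_{\ca'}\mu$, i.e. $\Phi_\eta=E(\cdot|\ca')$. Since $\eta\in I(\ca)$ we have $\eta|_{\ca\ot\ca}=\Delta_\ca$, which by~(\ref{mar5}) means $\Phi_\eta f=f$ for every $f\in L^2(\ca)$; as $\Phi_\eta=E(\cdot|\ca')$ this forces $L^2(\ca)\subset L^2(\ca')$, that is $\ca\subset\ca'$. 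In particular $\ca'$ is non-trivial.

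It remains to produce the rigidity sequence for $T|_{\ca'}$ realizing $E(\cdot|\ca')$. Here I would use that $\eta=\Phi_\eta$ is a limit point of $\{T^j:j\in\Z\}$ in the weak operator topology: pick $q_n\to\infty$ (one must check $|q_n|\to\infty$ and not a bounded net accumulating at $E(\cdot|\ca')$; if only a finite power $T^{j_0}$ equalled $E(\cdot|\ca')$ then $T^{j_0}$ would be an orthogonal projection, forcing $T^{j_0}={\rm Id}$ on $L^2(\ca')$, and then one can still extract an unbounded sequence from $\{j_0+kp_n\}$ or simply replace the chosen limit point by one coming from an a priori unbounded sequence) with $T^{q_n}\to E(\cdot|\ca')$ weakly. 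Restricting to $L^2(\ca')$, on which $E(\cdot|\ca')$ acts as the identity, gives $T^{q_n}|_{L^2(\ca')}\to{\rm Id}$ in the weak, hence (for unitaries) the strong operator topology, so $(q_n)$ is a rigidity sequence for $T|_{\ca'}$ and $T^{q_n}\to E(\cdot|\ca')$ as operators on $L^2(\xbm)$, as claimed.

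The main obstacle I anticipate is the bookkeeping around unbounded sequences: guaranteeing that the idempotent $\eta\in I(\ca)$ is approached along a genuinely unbounded sequence $q_n\to\infty$ rather than being hit by some fixed power $T^{j_0}$ — this is where one must be careful to define $I(\ca)$ with ``limit point'' of $\{T^j:j\in\Z\}$ in a way that automatically supplies an escaping sequence, or to argue directly that if $E(\cdot|\ca')$ is a finite power of $T$ then it is already the identity on the relevant subspace so any rigidity sequence for $T|_\ca$ works. Everything else is a routine assembly of~(\ref{mar4})--(\ref{mar7}), Proposition~\ref{idemp}, and the existence of idempotents in compact semitopological semigroups.
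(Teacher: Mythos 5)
Your proof is correct and follows essentially the same route as the paper: form the compact semitopological semigroup $I({\cal A})$ of limit points of $\{T^j\}$ identifying ${\cal A}$, extract an idempotent, and apply Proposition~\ref{idemp} to recognize it as $E(\cdot|{\cal A}')$. The extra care you take about the approximating sequence being unbounded is a detail the paper leaves implicit, but it does not change the argument.
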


\subsection{Canonical factor of a $G$-action
associated to a saturated Borel subgroup}\label{dwajeden} Assume
that $\Lambda$ is a Borel subgroup of $\widehat{G}$. Let us recall
(see \cite{Ho-Me-Pa}) that if $\si,\tau\in M^{+,1}(\widehat{G})$
then $\tau$ {\em sticks to} $\si$ if
$$
\widehat{\si}(g_j)\to 1\; \Longrightarrow\; \widehat{\tau}(g_j)\to
1$$ for any sequence $(g_j)_{j\geq1}$ in $G$ going to infinity.
Following~\cite{Ho-Me-Pa}, one says that $\Lambda$ is {\em
saturated} if for any $\si,\tau\in M^{+,1}(\widehat{G})$ $$
\sigma(\Lambda)=1\;\mbox{and $\tau$ sticks to
$\sigma$}\;\Longrightarrow\; \tau(\Lambda)=1.$$

\begin{Th}[\cite{Ho-Me-Pa}]\label{p4}
Every group $\lf$ is saturated.\bez
\end{Th}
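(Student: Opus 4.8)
The plan is to show that membership of a measure's mass in $\lf$ is detected purely through the "sticking" relation, and for this I would pass from the subgroup $\lf$ of $\widehat G$ to the associated Mackey action. Recall that $\lf$ is exactly the group of $L^\infty$-eigenvalues of the Mackey $G$-action $W=(W_g)_{g\in G}$ built from $T$ and $\va$; equivalently, for $\chi\in\widehat G$ one has $\chi\in\lf$ iff $\chi$ is an eigenvalue of $W$ (with a measurable eigenfunction). The key soft fact I would invoke is that the set of eigenvalues of any measurable $G$-action, together with the relation of sticking of spectral measures, behaves well: if $\sigma$ is a probability measure on $\widehat G$ concentrated on the eigenvalue group $\lf$, then $\sigma$ is (equivalent to) the spectral measure of some function $F$ in the closed linear span of eigenfunctions of $W$, i.e.\ in the "discrete" part $H_d$ of $L^2$ for $W$; conversely the spectral measure of any such $F$ is concentrated on $\lf$.

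The heart of the argument is then the following dichotomy for the restriction $W|_{H_d}$: its maximal spectral type $\si_{W,d}$ has the property that any $\tau\in M^{+,1}(\widehat G)$ which sticks to $\si_{W,d}$ is again concentrated on $\lf$. I would prove this by the standard ergodic-theoretic trick: a measure $\tau$ sticking to $\sigma$ means, in operator language, that along every sequence $g_j\to\infty$ for which $V^\sigma_{g_j}\to\mathrm{Id}$ weakly one also has $V^\tau_{g_j}\to\mathrm{Id}$; hence the Gelfand/WAP-compactification data of $V^\tau$ is subordinate to that of $V^\sigma$, so the "discrete" (= WAP, = almost periodic vectors) subspace is respected. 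Concretely: if $\tau$ sticks to $\si_{W,d}$, choose $g_j\to\infty$ with $\widehat{\si_{W,d}}(g_j)\to\si_{W,d}(\widehat G)$ (a Dirichlet-type selection, available because $\si_{W,d}$ is a countable sum of point masses pushed around by $W$, hence rigid); then $\widehat\tau(g_j)\to\tau(\widehat G)$, so $\tau$ is Dirichlet, and iterating/refining one shows every character in the support of $\tau$ is an eigenvalue of $W$, i.e.\ lies in $\lf$. This uses that $\lf$, being an eigenvalue group of a $G$-action, is closed under the operation "limit of characters along a rigidity sequence" together with the fact that such limits generate back into $\lf$.

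The main obstacle I expect is precisely this last step — turning "$\tau$ is Dirichlet / sticks to a rigid measure" into "$\mathrm{supp}\,\tau\subset\lf$" — because it requires knowing that the Mackey action has enough rigidity on its discrete part and that one can localize from the total mass statement $\widehat\tau(g_j)\to\tau(\widehat G)$ down to each character. I would handle this by decomposing $\tau$ over $\widehat G/\lf$ (using that $\lf$ is Borel, so this quotient makes sense measurably) and applying the sticking hypothesis to the pieces: any part of $\tau$ sitting on a coset $\chi_0\lf$ with $\chi_0\notin\lf$ would, after twisting by $\chi_0^{-1}$, have to stick to $\si_{W,d}$ yet fail to be an eigenvalue-type measure, contradicting that the Mackey action — being ergodic with pure point part exactly indexed by $\lf$ — admits no measurable eigenfunction with eigenvalue outside $\lf$; this last contradiction is exactly where one uses that $\lf$ is the \emph{full} $L^\infty$-eigenvalue group of the Mackey action rather than just some subgroup of eigenvalues. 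Since the theorem is attributed to \cite{Ho-Me-Pa}, I would, in the write-up, either cite that paper directly for the saturation of $\lf$ or reproduce the harmonic-analysis argument there verbatim; the ergodic-theoretic reformulation above is the conceptual skeleton one would present.
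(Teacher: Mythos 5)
The paper does not prove this theorem at all: it is imported verbatim from \cite{Ho-Me-Pa}, so there is no internal argument to compare yours with. Judged on its own terms, your sketch has two genuine gaps. First, the reduction of ``$\sigma$ concentrated on $\lf$'' to ``$\sigma$ is the spectral measure of a function in the discrete part $H_d$ of the Mackey action'' does not work. The Mackey action associated to $\va$ is in general only nonsingular, not measure-preserving, so its $L^\infty$-eigenfunctions are not pairwise orthogonal and there is no spectral decomposition in which $\lf$ appears as the support of a discrete spectral type. This is not a technicality: the whole point of the Host--M\'ela--Parreau theory is that $\lf$ is typically an uncountable, dense, non-closed Borel subgroup carrying \emph{continuous} probability measures, whereas the eigenvalue set of a unitary $G$-representation with orthogonal eigenfunctions is countable. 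So an arbitrary $\sigma$ with $\sigma(\lf)=1$ simply cannot be realized as $\sigma_{F,W}$ for $F$ in a span of eigenfunctions.

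Second, the chain ``$\tau$ sticks to a rigid/Dirichlet measure $\Rightarrow$ $\tau$ is Dirichlet $\Rightarrow$ ${\rm supp}\,\tau\subset\lf$'' fails at its last arrow: being Dirichlet is a statement about $\widehat\tau(g_j)\to\tau(\widehat G)$ along some sequence and carries no localization of $\tau$ relative to a prescribed subgroup; there are Dirichlet measures concentrated on cosets of, or entirely off, any given saturated subgroup. Your proposed repair --- decomposing $\tau$ over $\widehat G/\lf$ and twisting each coset piece back --- is both measure-theoretically delicate (the quotient by a dense non-closed Borel subgroup is not a standard Borel space, so the ``decomposition over cosets'' must be justified) and circular: ruling out mass on a coset $\chi_0\lf$ with $\chi_0\notin\lf$ requires exactly the kind of localization statement (cf.\ Lemma~\ref{l2} of the paper) that is \emph{derived from} saturation, not available before it. The actual proof in \cite{Ho-Me-Pa} proceeds through harmonic analysis of measures via the convex-hull characterization recorded as Theorem~\ref{p3}: one exhibits $1_{\lf}$ as a limit of convex combinations of characters in $L^1(\tau)$ for every $\tau$, using the cohomology equation $\chi\circ\va=\xi/\xi\circ T$ and ergodic averaging, rather than any rigidity/WAP argument on the Mackey action. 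As written, your sketch does not close, and citing \cite{Ho-Me-Pa} is the correct course.
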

\begin{Remark}\label{sigmafsat}\em As noticed {\em e.g.} in \cite{Le-Pa}, every subgroup
$\Sigma_\varphi$ is also of the form $\Lambda_\psi$, whence
$\Sigma_\varphi$ is also a saturated subgroup.
\end{Remark}
We shall also need the following characterization of saturated
groups.

\begin{Th}[\cite{Ho-Me-Pa}]\label{p3}
A Borel subgroup $\Lambda\subset\widehat{G}$ is saturated if and
only if for any $\tau\in M^+(\widehat{G})$ the indicator function
$1_{\Lambda}$ belongs to the closed convex hull in
$L^1(\widehat{G},\tau)$ of the characters  of $\widehat{G}$.\bez
\end{Th}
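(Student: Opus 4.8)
The plan is to pass to a dual reformulation of the right-hand condition and then prove the two implications separately. First I would fix $\tau\in M^+(\widehat G)$ and denote by $\mathcal K_\tau$ the $L^1(\widehat G,\tau)$-closed convex hull of the characters $e_g\colon\chi\mapsto\chi(g)$ of $\widehat G$, $g\in G$. Since $\mathcal K_\tau$ is convex, Mazur's theorem identifies its norm closure with its weak closure, so by Hahn--Banach $1_\Lambda\notin\mathcal K_\tau$ exactly when some $\psi\in L^\infty(\widehat G,\tau)=L^1(\widehat G,\tau)^\ast$ strictly separates it from the characters, i.e.\ $\mbox{Re}\int\psi\,1_\Lambda\,d\tau>\sup_{g}\mbox{Re}\int\psi\,e_g\,d\tau$. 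Writing $\mu=\psi\,d\tau$ this reads $\mbox{Re}\,\mu(\Lambda)>\sup_g\mbox{Re}\,\widehat\mu(g)$, and conversely every $\mu\in M(\widehat G)$ is of the form $\psi\,d|\mu|$ with $|\psi|\equiv1$. Hence the stated convex-hull condition holds for all $\tau$ if and only if the following holds, which I call condition (C):
$$
\mbox{Re}\,\mu(\Lambda)\le\sup_{g\in G}\mbox{Re}\,\widehat\mu(g)\quad\mbox{for every }\mu\in M(\widehat G).
$$
The whole theorem then reduces to showing that $\Lambda$ is saturated if and only if (C) holds.

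For the implication (C)$\Rightarrow$saturation I would take $\sigma,\tau\in M^{+,1}(\widehat G)$ with $\sigma(\Lambda)=1$ and $\tau$ sticking to $\sigma$, and suppose for contradiction that $r:=\tau(\Lambda)<1$. The idea is to test (C) on $\mu=\sigma-\lambda\tau$ for a suitable $\lambda>0$: here $\mbox{Re}\,\mu(\Lambda)=1-\lambda r$, whereas $\sup_g\mbox{Re}\,\widehat\mu(g)=\sup_g\bigl(\mbox{Re}\,\widehat\sigma(g)-\lambda\,\mbox{Re}\,\widehat\tau(g)\bigr)$. Because $r<1$, a violation of (C) — and hence the desired contradiction — will follow once I show that $\mbox{Re}\,\widehat\sigma(g)-\lambda\,\mbox{Re}\,\widehat\tau(g)$ stays strictly below $1-\lambda r$ for every $g$. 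This is precisely where sticking enters: the only frequencies that could spoil the bound are those at which $\widehat\sigma(g)$ is close to $1$ while $\widehat\tau(g)$ is not, and sticking says that along any $g_j\to\infty$ with $\widehat\sigma(g_j)\to1$ one has $\widehat\tau(g_j)\to1$; combined with the continuity of $\widehat\sigma,\widehat\tau$ at the identity this controls all frequencies that escape to infinity or stay near $0$.

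The delicate point, which I expect to be the crux, is the behaviour at the remaining \emph{bounded} frequencies, namely $g$ in a fixed compact set bounded away from the identity: sticking is an at-infinity condition and says nothing there, yet $\widehat\sigma(g)$ may equal $1$ at a torsion or compact element $g_0$ while $\widehat\tau(g_0)\neq1$. I would handle this by the structure theorem for second countable LCA groups, splitting off the compact part of $G$: on the compact subgroup generated by such a $g_0$ one replaces the limit at infinity by a Cesàro average over the equidistributed powers $g_0^m$, coupled with a non-compact escaping direction along which $\widehat\sigma\to1$, to force $\chi(g_0)=1$ also $\tau$-almost everywhere, i.e.\ $\widehat\tau(g_0)=1$. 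Making this reduction precise is the technical heart of the argument, since a naive ``uniform sticking'' statement is false when $\Lambda$ carries Rajchman-type measures.

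For the converse I would argue contrapositively: if (C) fails there is $\mu\in M(\widehat G)$ with $\mbox{Re}\,\mu(\Lambda)>c:=\sup_g\mbox{Re}\,\widehat\mu(g)$, and from it I must manufacture probability measures violating saturation. Choosing $g_n$ with $\mbox{Re}\,\widehat\mu(g_n)\to c$ and passing to a weak-$\ast$ cluster point of the modulated measures $\overline{e_{g_n}}\,\mu$, I would extract a probability measure $\tau$ charging $\widehat G\setminus\Lambda$ together with a companion $\sigma$ supported on $\Lambda$ (built from the renormalised part of $\mu$ on $\Lambda$) such that the extremality expressed by $\mbox{Re}\,\mu(\Lambda)>c$ forces $\tau$ to stick to $\sigma$ while $\tau(\Lambda)<1$. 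Verifying that this limit genuinely sticks to $\sigma$ yet still charges the complement of $\Lambda$ is the main difficulty in this direction, dual to the bounded-frequency obstruction above.
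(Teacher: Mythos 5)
First, a point of reference: the paper does not prove this statement at all --- it is quoted from Host--M\'ela--Parreau \cite{Ho-Me-Pa} with the proof omitted, and the argument there rests on substantial harmonic-analysis machinery (Dirichlet measures, weak-$\ast$ analysis of the characters viewed in $L^\infty(\tau)$, and the subgroup structure of $\Lambda$). So there is no in-paper route to compare with; your proposal has to stand on its own, and as written it does not. Your opening reduction is correct: Hahn--Banach together with Mazur's theorem does convert the convex-hull condition for all $\tau$ into your condition (C) for all $\mu\in M(\widehat{G})$, via the polar decomposition $\mu=\psi\,d|\mu|$. But both implications of ``(C) $\Leftrightarrow$ saturated'' are then left with explicitly acknowledged holes at exactly the places where the content of the theorem lies.

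In the direction (C)$\Rightarrow$saturated, testing on $\mu=\sigma-\lambda\tau$ requires $\sup_g\bigl(\mathrm{Re}\,\widehat\sigma(g)-\lambda\,\mathrm{Re}\,\widehat\tau(g)\bigr)<1-\lambda\tau(\Lambda)$. Sticking does self-improve to a quantitative statement outside compact sets (otherwise extract a sequence contradicting it), but on a compact set away from the identity nothing is controlled, and your proposed fix --- Ces\`aro averaging over powers of a compact element $g_0$ ``coupled with a non-compact escaping direction along which $\widehat\sigma\to1$'' --- presupposes that such an escaping direction exists, i.e.\ that $\sigma$ is a Dirichlet measure. If no sequence $g_j\to\infty$ satisfies $\widehat\sigma(g_j)\to1$, the sticking hypothesis is vacuous and there is nothing to couple to; that $\sigma(\Lambda)=1$ forces $\sigma$ to be Dirichlet (for saturated $\Lambda\neq\widehat G$) is itself a nontrivial consequence of the theorem, so the step is circular as it stands. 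In the converse direction the gap is more serious: from a single complex measure $\mu$ violating (C) you must manufacture probability measures $\sigma,\tau$ with $\sigma(\Lambda)=1$, $\tau(\Lambda)<1$ and $\tau$ sticking to $\sigma$. A weak-$\ast$ cluster point of the modulated measures $\overline{e_{g_n}}\,\mu$ carries no a priori relation to $\Lambda$, need not be positive or usefully normalizable, and the sticking relation --- a constraint on Fourier transforms along sequences going to infinity --- is not preserved under weak-$\ast$ limits in any evident way. You have correctly located both difficulties, but locating them is not resolving them: neither implication is actually proved, and the second in particular would need the full strength of the \cite{Ho-Me-Pa} apparatus rather than the soft compactness argument sketched here.
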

The following corollary  describes a dynamical
consequence of Theorem~\ref{p3}~%
\footnote{This consequence of Theorem~\ref{p3} seems to appear for
the first time.}. Given a Borel subset $\Lambda$ of $\widehat{G}$,
we denote by $H_\Lambda$ the spectral subspace corresponding to
$\Lambda$, {\em i.e.\ }the space of those elements in $L^2\ycn$
whose spectral measures are concentrated on $\Lambda$. We denote
by $\tilde{g}$ the character of $\widehat{G}$ associated by
Pontryagin duality to $g\in G$: $\tilde{g}(\chi):=\chi(g)$ for
$\chi\in\widehat{G}$.

\begin{Cor}\label{c1}
Let $\cs$ be an action of $G$ on $\ycn$ and $\Lambda$ be a
saturated subgroup of $\widehat{G}$. Then $H_{\Lambda}= L^2({\cal
A})$ where $\cal A\subset\cc$ is a factor of $\cs$.
\end{Cor}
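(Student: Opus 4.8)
The plan is to show that $H_\Lambda$ is a $\cs$-invariant closed subspace that is moreover invariant under multiplication by bounded functions from some sub-$\sigma$-algebra, so that it is of the form $L^2(\ca)$ by the standard characterization of $L^2$-spaces of factors. First I would observe that $H_\Lambda$ is automatically $\cs$-invariant: if $f\in L^2\ycn$ has $\sigma_{f,\cs}$ concentrated on $\Lambda$, then for every $g\in G$ the spectral measure of $S_gf$ satisfies $\widehat{\sigma}_{S_gf}(h)=\chi(g)\cdots$ — more precisely $\sigma_{S_gf,\cs}=\sigma_{f,\cs}$ (translation by $g$ acts as a unitary on $G(f)$ commuting with the $V_h^{\sigma_f}$, and the spectral measure is unchanged), so $S_gf\in H_\Lambda$. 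Also $H_\Lambda$ is closed, being a spectral subspace. The real content is to produce the multiplicative structure.

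The key step is to use Theorem~\ref{p3}. Fix $f\in H_\Lambda$ and let $\tau=\sigma_{f,\cs}$; since $\Lambda$ is saturated, $1_\Lambda$ lies in the closed convex hull in $L^1(\widehat G,\tau)$ of the characters $\tilde g$, $g\in G$. Transporting this through the canonical isometry of $G(f)$ with $L^2(\widehat G,\sigma_f)$ that sends $f\mapsto 1_{\widehat G}$ and $S_g$ to multiplication by $\tilde g$, I get that $f=1_\Lambda\cdot f$ (as an element of $L^2(\widehat G,\sigma_f)$, $1_\Lambda$ is the identity since $\sigma_f(\widehat G\setminus\Lambda)=0$) lies in the $L^2(\sigma_f)$-closure of convex combinations of $\{\tilde g\cdot 1_{\widehat G}\}$, i.e. $f\in\ov{\rm span}\{S_gf:g\in G\}=G(f)$ — which is trivially true and not yet what I want. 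The point I actually need is subtler: I want to show $H_\Lambda$ is closed under the conditional-expectation-type operations that detect a factor. The cleanest route is: show that the orthogonal projection $P_\Lambda$ onto $H_\Lambda$ is a \emph{Markov} operator (positive, $P_\Lambda 1=1$, $P_\Lambda^*=P_\Lambda$), because then $P_\Lambda=E(\cdot\mid\ca)$ for the factor $\ca=\{A\in\cc:1_A\in H_\Lambda\}$, and $H_\Lambda=L^2(\ca)$. Self-adjointness and $P_\Lambda 1=1$ (note $1\in H_\Lambda$ since its spectral measure is $\delta_{1_{\widehat G}}$ and $1_{\widehat G}\in\Lambda$) are immediate; \emph{positivity} is where Theorem~\ref{p3} enters. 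For $f\ge 0$ in $L^2\ycn$, decompose along the spectral measure $\tau$ of the cyclic representation generated by $f$ and $P_\Lambda f$: applying Theorem~\ref{p3} to $\tau$, approximate $1_\Lambda$ in $L^1(\tau)$ by convex combinations $\sum_i a_i\tilde g_i$ with $a_i\ge 0$, $\sum a_i=1$; pulling back, $P_\Lambda f$ is an $L^2$-limit of $\sum_i a_i S_{g_i}f$, each term $\ge 0$, hence $P_\Lambda f\ge 0$.

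The main obstacle I anticipate is making the "pull back the convex approximation of $1_\Lambda$" argument rigorous when several functions are in play simultaneously (we need a single cyclic space, or a direct-integral model, containing both $f$ and $P_\Lambda f$, on which $S_g$ is diagonalized and $P_\Lambda$ becomes multiplication by $1_\Lambda$). This is handled by taking the cyclic subspace $G(f)$ — note $P_\Lambda f\in G(f)$ automatically since $P_\Lambda$ commutes with $\cs$ — and using the canonical model $L^2(\widehat G,\sigma_f)$, under which $P_\Lambda|_{G(f)}$ is multiplication by $1_\Lambda$ (as $\sigma_{P_\Lambda f}\ll \sigma_f|_\Lambda$ and $H_\Lambda\cap G(f)$ corresponds to $L^2$-functions supported on $\Lambda$). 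Then Theorem~\ref{p3} with $\tau=\sigma_f$ gives $1_\Lambda\in\ov{\rm conv}\{\tilde g:g\in G\}$ in $L^1(\sigma_f)$, hence (multiplying by the bounded function corresponding to $f$ and passing from $L^1$ to $L^2$ via a uniform bound, or rather working directly in $L^2(\sigma_f)$ where boundedness of the multipliers $\tilde g$ gives the needed convergence) $1_\Lambda\cdot \hat f$ is an $L^2(\sigma_f)$-limit of convex combinations $\sum a_i\tilde g_i\hat f$, i.e. $P_\Lambda f=\lim\sum a_i S_{g_i}f$ in $L^2\ycn$, and positivity of $P_\Lambda f$ follows. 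Once $P_\Lambda$ is shown Markov the conclusion $H_\Lambda=L^2(\ca)$ with $\ca$ a $\cs$-invariant sub-$\sigma$-algebra is routine, and $\cs$-invariance of $\ca$ follows from $P_\Lambda S_g=S_g P_\Lambda$.
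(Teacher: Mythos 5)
Your proof is correct and follows essentially the same route as the paper: both apply Theorem~\ref{p3} to approximate $1_\Lambda$ by convex combinations of characters, transport this through the canonical isomorphism $G(f)\cong L^2(\widehat G,\sigma_f)$ to realize $\mathrm{proj}_{H_\Lambda}$ as a weak limit of the Markov operators $\sum_k a_k S_{g_k}$, and conclude that the projection is Markov, hence the conditional expectation onto a factor. The only (immaterial) organizational difference is that the paper fixes a single approximating sequence adapted to the maximal spectral type $\sigma_{\cs}$ and gets weak operator convergence on all of $L^2\ycn$ at once, whereas you verify positivity of $P_\Lambda$ one cyclic subspace at a time.
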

\begin{proof}
First notice that in Theorem~\ref{p3}, $L^1$-convergence can be
replaced by $L^2$-convergence, in particular
$$
\sum_{k=1}^{N_n}a_k^{(n)}\tilde{g}_k^{(n)}(\chi)\to 1_{\Lambda}\;
\mbox{ in }\;L^2(\widehat{G},\cb(\widehat{G}),\sigma_{\cs})
$$ for some $a^{(n)}_k\geq0$ with
$\sum_{k=1}^{N_k}a^{(n)}_k=1$ and some $g_k^{(n)}\in G$. Then, for
each $\tau\in M^+(\widehat{G})$, $\tau\ll\sigma_{\cs}$ we still
have \beq\label{tom1}
\sum_{k=1}^{N_n}a_k^{(n)}\tilde{g}_k^{(n)}(\chi)\to 1_{\Lambda}\;
\mbox{ in }\;L^2(\widehat{G},\cb(\widehat{G}),\tau). \eeq Consider
$f\in L^2\ycn$. In the canonical representation of $G(f)$, the
function $\sum_{k=1}^{N_n}a_k^{(n)}\tilde{g}_k^{(n)}\in
L^2(\widehat{G},\cb(\widehat{G}),\si_f)$ corresponds to
$\sum_{k=1}^{N_n}a_k^{(n)}S_{g_k^{(n)}}f$ and the subspace
$1_\Lambda\cdot L^2(\widehat{G},\cb(\widehat{G}),\si_f)$
corresponds to $H_\Lambda\cap G(f)$. So, by taking $\tau=\si_f$
in~(\ref{tom1}),
$$\sum_{k=1}^{N_n}a_k^{(n)}S_{g_k^{(n)}}f\to {\rm proj}_{H_\Lambda\cap G(f)}f.$$

Now, since $H_\Lambda$ is a spectral subspace, ${\rm
proj}_{H_\Lambda}f={\rm proj}_{H_\Lambda\cap G(f)}f$. It follows
that the sequence
$\left(\sum_{k=1}^{N_n}a_k^{(n)}S_{g_k^{(n)}}\right)_{n\geq1}$ of
Markov operators of $L^2\ycn$ converges weakly to ${\rm
proj}_{H_\Lambda}$. Therefore, the latter projection is a Markov
operator and the result follows from Proposition~\ref{idemp}.
\end{proof}
The following lemma allows us to localize some eigenvalues of
$\cs$.

\begin{Lemma}\label{l2}
Let $\cs$ be a $G$-action on $\ycn$ and $\Lambda$ be a saturated
subgroup of $\widehat{G}$. Assume that $\si_{\cs}(\Lambda)=0$ and
that $\si_{\cs}(\chi_0\Lambda)>0$ for some $\chi_0\in\widehat{G}$.
Then $\cs$ has an eigenvalue in $\chi_0\Lambda$. More precisely,
there exists exactly one eigenvalue of $\cs$ in $\chi_0\Lambda$
and $H_{\chi_0\Lambda}$ is the eigenspace corresponding to that
eigenvalue.
\end{Lemma}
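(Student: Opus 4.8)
The plan is to reduce the statement about eigenvalues of $\cs$ in the coset $\chi_0\Lambda$ to the already-established Corollary~\ref{c1}. The key observation is that a character $\chi_0$ of $G$ (regarded via Pontryagin duality as $\tilde{g}\mapsto$ multiplication, i.e. as a one-dimensional unitary representation) can be used to ``twist'' the action $\cs$: define a new $G$-action $\cs'$ on $\ycn$ by $S'_g := S_g$ but with the spectral measure of any $f\in L^2\ycn$ shifted by $\chi_0$. More concretely, the point is that $\sigma_{f,\cs}$ is concentrated on $\chi_0\Lambda$ if and only if, after multiplying by the appropriate cocycle, the corresponding measure is concentrated on $\Lambda$ itself. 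So first I would set up this correspondence carefully: $H_{\chi_0\Lambda}$ with respect to $\cs$ corresponds to $H_\Lambda$ with respect to a suitable twisted object, and apply Corollary~\ref{c1} to the twisted object.

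Here is the cleaner route I would actually carry out. Consider the group $G\times\Z$ acting on $Y\times\T$ — or, more simply, note that since $\Lambda$ is saturated, so is $\chi_0\Lambda$ only as a coset, not as a group, so I cannot apply Corollary~\ref{c1} directly to $\chi_0\Lambda$. Instead, the product strategy: let $\Lambda'=\chi_0^{\Z}\cdot\Lambda$, the subgroup of $\widehat{G}$ generated by $\Lambda$ and $\chi_0$. One checks that $\Lambda'$ is again saturated (it is a countable union of cosets of $\Lambda$ if $\chi_0$ has infinite order, or a finite such union; saturation of a subgroup containing a saturated one needs a short argument, or one invokes Remark~\ref{sigmafsat}/the fact that $\Lambda'=\Lambda_\psi$ for a suitable $\psi$). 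By Corollary~\ref{c1}, $H_{\Lambda'}=L^2(\ca')$ for an $\cs$-factor $\ca'$. Now restrict attention to $\cs|_{L^2(\ca')}$: its maximal spectral type lives on $\Lambda'=\bigcup_{n\in\Z}\chi_0^n\Lambda$, a disjoint union (if the cosets are distinct) of translates of the saturated null-set $\Lambda$ (null because $\si_\cs(\Lambda)=0$). The hypothesis $\si_\cs(\chi_0\Lambda)>0$ then says that exactly one of these translates, $\chi_0\Lambda$, carries positive mass — here I would use that $\chi_0\Lambda$ saturated-type reasoning forces the spectral measure restricted to it to be discrete, in fact a single atom.

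The heart of the matter is this last step: showing that $\si_\cs$ restricted to the coset $\chi_0\Lambda$ is a single Dirac mass, equivalently that $H_{\chi_0\Lambda}$ is one-dimensional and spanned by an eigenfunction. For this I would argue by contradiction and by the saturation property applied to $\Lambda$ directly. Suppose $H_{\chi_0\Lambda}\neq\{0\}$. Pick $f\in H_{\chi_0\Lambda}$, so $\si_f$ is concentrated on $\chi_0\Lambda$. Then $\si_f\ast\bar{\si}_f$ (the spectral measure, roughly, of an appropriate vector in $G(f)\otimes\overline{G(f)}$ under the diagonal action, or: the spectral type of $f\otimes\bar f$ in $L^2(Y\times Y)$ under $\cs\otimes\cs$) is concentrated on $(\chi_0\Lambda)(\chi_0\Lambda)^{-1}=\Lambda$. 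If $\si_f$ were not a single atom, $\si_f\ast\bar\si_f$ would be a nontrivial continuous-or-multi-atom measure on $\Lambda$; but one shows every positive measure on a saturated null-set for $\si_\cs$-type... — the clean way is: $\Lambda$ is saturated and $\si_{\cs\ot\cs}(\Lambda)$ controls $\si_f\ast\bar\si_f$, and by Theorem~\ref{p3} the indicator $1_\Lambda$ is a weak limit of characters, which forces any such measure to be rigid/group-like, ultimately that $\si_f\ast\bar\si_f=\delta_1$, i.e. $\si_f$ is a single atom. That atom is then an eigenvalue of $\cs$ in $\chi_0\Lambda$, its eigenspace is all of $H_{\chi_0\Lambda}$, and uniqueness follows since two distinct eigenvalues in $\chi_0\Lambda$ would have difference in $\Lambda\setminus\{1\}$, contradicting $\si_\cs(\Lambda)=0$.

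The main obstacle I anticipate is making the ``$\si_f$ is a single atom'' step rigorous: translating the saturation property (a statement about whole measures being supported on $\Lambda$) into the rigidity conclusion that the measure on the \emph{coset} $\chi_0\Lambda$ must be purely atomic with one atom. The slick argument uses $f\otimes\bar f$: its spectral measure under $\cs\otimes\cs$ is $\ll\si_\cs\ast\si_\cs$ and concentrated on $\Lambda$; by Corollary~\ref{c1} applied to $\cs\otimes\cs$ and $\Lambda$, $f\otimes\bar f$ lies in $L^2(\ca_0)$ for an $(\cs\ot\cs)$-factor with $\si_{(\cs\ot\cs)|_{\ca_0}}(\Lambda)=\si_{(\cs\ot\cs)|_{\ca_0}}(\widehat G)$; but $\si_\cs(\Lambda)=0$ means this factor is nontrivial only through its constants unless... — more carefully, one uses that $|f|^2=E(f\otimes\bar f\,|\,\text{diagonal})$-type reasoning to conclude $|f|$ is $\cs$-invariant, hence (ergodicity not assumed, so) $\cs$ acts on $\{|f|=c\}$-level sets, and on each $S_g f/f$ is a function of modulus one whose spectral properties pin it down to a character. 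I would be prepared to spend most of the write-up carefully on exactly this point; the rest is bookkeeping with cosets and the already-proved corollary.
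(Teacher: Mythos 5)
Your proposal correctly isolates the hard step --- showing that every non-zero $f\in H_{\chi_0\Lambda}$ is forced to meet the eigenfunctions, i.e.\ that $\si_{f}$ must have an atom --- but the argument you offer for that step does not go through. The assertion that saturation of $\Lambda$ together with Theorem~\ref{p3} ``forces $\si_f\ast\widetilde{\si_f}=\delta_1$'' is false as stated: a saturated group can perfectly well carry continuous probability measures (the paper itself notes that when $\lf$ is uncountable there are weakly mixing Gaussian actions whose maximal spectral type is concentrated on $\lf$), and the mere fact that $\si_f\ast\widetilde{\si_f}$ is concentrated on $\Lambda$ puts no atomicity constraint on $\si_f$. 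The hypothesis $\si_{\cs}(\Lambda)=0$ has to do real work at precisely this point, and in your $f\ot\ov{f}$ / diagonal self-product sketch it never enters: the $H_\Lambda$-factor of $\cs\ot\cs$ can be non-trivial even when $\si_{\cs}(\Lambda)=0$, since $\si_{\cs}\ast\widetilde{\si_{\cs}}$ may charge $\Lambda$. The concluding appeal to ``$|f|$ is $\cs$-invariant'' is circular --- that is what you would get \emph{after} knowing $f$ is an eigenfunction.

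The paper's proof supplies exactly the missing ingredient. Instead of joining $\cs$ with itself, it takes the product of $\cs$ with an auxiliary ergodic discrete-spectrum $G$-action ${\cal R}$ on the dual of the discrete cyclic group generated by $\chi_0$, and picks the $\ov{\chi}_0$-eigenfunction $h$ of ${\cal R}$. Then $f\ot h$ has spectral measure $\delta_{\ov{\chi}_0}\ast\si_f$ concentrated on $\Lambda$, so it lies in the $\Lambda$-factor ${\cal A}$ of $\cs\times{\cal R}$ given by Corollary~\ref{c1}; and --- here is where $\si_{\cs}(\Lambda)=0$ is used --- $(\cs\times{\cal R})|_{\cal A}$ is \emph{spectrally disjoint} from $\cs$, hence disjoint in Furstenberg's sense. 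Disjointness is what legitimizes writing the spectral measure of the product $|f|^2\ot h=(f\ot h)\cdot(\ov{f}\ot 1)$ as the convolution $\si_{f\ot h,(\cs\times{\cal R})|_{\cal A}}\ast\si_{\ov{f},\cs}$; since that measure dominates $\delta_{\ov{\chi}_0}$ (because $\int_Y|f|^2\,d\nu>0$) and the first convolution factor sits on $\Lambda$, the second factor $\si_{\ov{f}}$ must have an atom, which is the desired eigenvalue. Without some such independence input, a product of two functions measurable with respect to different factors has no reason to have convolution spectral measure, and that is the idea your proposal lacks. (Separately, your detour through $\Lambda'=\langle\Lambda,\chi_0\rangle$ and its claimed saturation is both unproven --- saturation is not obviously inherited by a group generated by a saturated group and one extra character --- and unnecessary.)
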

\begin{proof}
Denote by $\Gamma$ the cyclic group $\{\chi_0^n:\:n\in \Z\}$
considered with the discrete topology. Let $Z$ be the dual group
of $\Gamma$. Hence we obtain a probability space $\zde$, where
${\cal D}=\cb(Z)$ and $\eta$ is the normalized Haar measure on
$Z$. Given $g\in G$ we define $\tilde{g}\in Z$ by
$\tilde{g}(\chi)=\chi(g)$ for each $\chi\in \Gamma$, and
$R_g:\:Z\to Z$ by $R_g(z)=\tilde{g}\cdot z$. In this way we obtain
an ergodic discrete spectrum $G$-action ${\cal R}= (R_g)_{g\in G}$
on $\zde$, whose point spectrum is equal to $\Gamma$. Let us
consider the diagonal $G$-action $\cs\times{\cal R}=(S_g\times
R_g)_{g\in G}$ on $(Y\times Z,{\cal C}\otimes{\cal
D},\nu\otimes\eta)$. The maximal spectral type
$\sigma_{\cs\times{\cal R}}$ of the associated unitary $G$-action
is equal to
$$
\left(\sum_{n\in\Z}\frac1{2^{|n|}}\delta_{\chi_0^n}\right)\ast\si_{\cs}+
\sum_{n\neq0}\frac1{2^{|n|}}\delta_{\chi_0^n}.
$$
By the assumption, we have $\si_{\cs\times{\cal R}}(\Lambda)>0$.
In view of Corollary~\ref{c1}, there exists a non-trivial ${\cal
S}\times{\cal R}$--factor ${\cal A}\subset{\cal C}\otimes{\cal D}$
for which
$$
L^2({\cal A})=\{F\in L^2(Y\times Z,\nu\otimes\eta):\:
\si_{F,\cs\times{\cal R}}\;\mbox{is concentrated on}\;\Lambda\}.
$$
Now fix a non-zero $f\in H_{\chi_0\Lambda}$ and let $h$ be the
eigenfunction $z\mapsto \ov{z(\chi_0)}$ of $\cal R$, corresponding
to the eigenvalue $\ov{\chi}_0$. Then $\sigma_{f\otimes
h,\cs\times{\cal R}}=\delta_{\ov{\chi_0}}\ast\sigma_{f,\cs}$,
hence \beq\label{zima0} \sigma_{f\otimes h,\cs\times{\cal
R}}\,\,\mbox{is concentrated on }\Lambda \eeq and $f\otimes h\in
L^2_0({\cal A})$. Consider the function $|f|^2\otimes h$. First
notice that
$$
|f|^2\otimes h=(f\otimes h)\cdot(\ov{f}\otimes 1),
$$
so the function $|f|^2\otimes h$ is measurable with respect to
${\cal A}\vee({\cal C}\otimes \{\emptyset,Z\})$.

The two $G$-actions $(\cs\times{\cal R})|_{\cal A}$ and $\cs$ are
spectrally disjoint since, by assumption, $\si_{\cs}(\Lambda)=0$.
Hence, they are disjoint. In particular, $|f|^2\otimes h$ is in
$L^2(Y\times Z,\nu\otimes\eta)$ and \beq\label{zima1}
\sigma_{|f|^2\otimes h,\cs\times{\cal R}}= \sigma_{f\otimes
h,(\cs\times{\cal R})|_{\cal A}}\ast\sigma_{\ov{f},\cs}. \eeq At
the same time, since $\int_{Y}|f|^2\,d\nu>0$, we have
$\delta_{1}\ll \sigma_{|f|^2,\cs}$ and therefore
$$
\sigma_{|f|^2\otimes h,\cs\times{\cal R}}=
\sigma_{|f|^2,\cs}\ast\sigma_{h,{\cal R}}\gg\delta_{1}\ast
\delta_{\ov{\chi}_0}=\delta_{\ov{\chi}_0}.
$$

It now follows directly from (\ref{zima1}) that
$\sigma_{\ov{f},\cs}$ is not a continuous measure. More precisely,
in view of (\ref{zima0}), $\sigma_{\ov{f},\cs}$ must have a point
mass at some $\chi\in\widehat{G}$ such that $\ov{\chi}_0\in\chi
\Lambda$, and $f$ cannot be orthogonal to the subspace of
eigenfunctions corresponding to the eigenvalues of $\cs$ in
$\chi_0\Lambda$. Since $f$ is an arbitrary element of
$H_{\chi_0\Lambda}$, the space $H_{\chi_0\Lambda}$ consists only
of eigenfunctions. Finally, since $\sigma_{\cs}(\Lambda)=0$, no
two different eigenvalues of $\cs$ can be in the coset
$\chi_0\Lambda$ and the proof is complete.
\end{proof}

\begin{Remark} \em
Note that $\sigma_{\cs}(\Lambda)=0$ implies that $\cs$ is ergodic.
It follows that under the assumptions of the above lemma,
$H_{\chi_0\Lambda}$ is moreover one-dimensional.
\end{Remark}

Although, the result below (Proposition~\ref{rfmp}) will not be
used in what follows, we bring it up, as it is another sample of
applications of saturated groups in (non-singular) ergodic theory.

Assume that $T$ is a non-singular ergodic automorphism of a
standard probability space $\xbm$ and that $S:\ycn\to\ycn$ is
another non-singular automorphism. Let $\pi:\xbm\to\ycn$ settle a
homomorphism between $T$ and $S$. Following \cite{Da-Le},
$(S,\pi)$ is called a {\em relatively finite measure-preserving}
(rfmp) factor of $T$ if $\frac{d\mu\circ T}{d\mu}$ is
$\pi^{-1}({\cal C})$-measurable.

\begin{Prop}\label{rfmp} Assume that $T$ is a nonsingular ergodic
automorphism and $S$ is an rfmp factor of it. Let $R$ be a weakly
mixing probability preserving automorphism of a standard
probability Borel space $\zde$. Assume that $R\times S$ is
ergodic. Then $R\times T$ is also ergodic \footnote{Usually, to
ensure ergodicity one has to assume mild mixing of $R$, a property
stronger than weak mixing. As a matter of fact, $R$ is mildly
mixing if and only if $R\times T$ is ergodic for each non-singular
ergodic $T$, see \cite{Fu-We}. By definition (\cite{Fu-We}), mild
mixing means that $R$ has no non-trivial rigid factors.}.
\end{Prop}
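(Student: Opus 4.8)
The plan is to show that every bounded $(R\times T)$-invariant function on $Z\times X$ is constant. Write $\pi:\xbm\to\ycn$ for the factor map and let $\mu=\int_Y\mu_y\,d\nu(y)$ be the disintegration over $\cc$. The rfmp hypothesis means precisely that $T$ carries $\mu_y$ onto $\mu_{Sy}$ in a measure preserving fashion on the fibres, so that $\mathrm{id}\times\pi:(Z\times X,\eta\ot\mu)\to(Z\times Y,\eta\ot\nu)$ is a \emph{relatively measure preserving} extension intertwining $R\times T$ with $R\times S$. First I would record two reductions for $F\in L^\infty(Z\times X)$ with $F\circ(R\times T)=F$. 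Using the relative measure preservation, $E(F\mid Z\times Y)$ is $(R\times S)$-invariant, hence constant by ergodicity of $R\times S$; similarly $\bar f(x):=\int_Z F(z,x)\,d\eta(z)$ is $T$-invariant, hence constant by ergodicity of $T$. Subtracting these constants I may assume $\int_Z F\,d\eta=0$, and then $E(F\mid Z\times Y)$, being $(R\times S)$-invariant (hence constant) of zero $\eta$-average, vanishes as well. It remains to prove $F=0$.

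For the heart of the matter I would pass to the field $x\mapsto\gamma_x:=F(\cdot,x)\in L^2_0(Z)$. Invariance rewrites as $\gamma_{Tx}=U_R^{-1}\gamma_x$, where $U_R$ is the unitary Koopman operator of $R$; the quantity $y\mapsto\int_{\pi^{-1}(y)}\|\gamma_x\|_{L^2(Z)}^2\,d\mu_y(x)$ is $S$-invariant, hence a constant $c\ge0$, and $\int_{\pi^{-1}(y)}\gamma_x\,d\mu_y=0$ since $E(F\mid Z\times Y)=0$. Now I feed in the weak mixing of $R$: on $L^2_0(Z)$ the operator $U_R$ has \emph{continuous} maximal spectral type $\sigma_R$. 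Writing $L^2_0(Z)=\int_{\tor}^{\oplus}H_\lambda\,d\sigma_R(\lambda)$ with $U_R$ acting as multiplication by $\lambda$, the relation $\gamma_{Tx}=U_R^{-1}\gamma_x$ splits into $\gamma_{Tx}(\lambda)=\lambda^{-1}\gamma_x(\lambda)$ for $\sigma_R$-a.e.\ $\lambda$. For each $\lambda$ the fibre norm $\|\gamma_x(\lambda)\|_{H_\lambda}$ is $T$-invariant, hence a constant $c(\lambda)$ with $\int c(\lambda)\,d\sigma_R=c$; so $c>0$ would force $c(\lambda)>0$ on a set $\Lambda_0$ of positive $\sigma_R$-measure. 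Choosing a measurable unit section $\lambda\mapsto e_\lambda\in H_\lambda$ then produces, for $\lambda\in\Lambda_0$, a unimodular $\xi_\lambda:X\to\tor$ with $\xi_\lambda\circ T=\lambda^{-1}\xi_\lambda$ and $\int_{\pi^{-1}(y)}\xi_\lambda\,d\mu_y=0$, i.e.\ a \emph{continuum} of unimodular eigenvalues of $T$ whose eigenfunctions live relatively to $S$.

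The final step is to rule this out, and this is where I expect the real difficulty. For $\lambda\ne\lambda'$ in $\Lambda_0$ the relative scalar product $g(y):=\int_{\pi^{-1}(y)}\xi_\lambda\overline{\xi_{\lambda'}}\,d\mu_y$ satisfies, again by relative measure preservation, $g\circ S=\lambda^{-1}\lambda'\,g$; since $|g|$ is $S$-invariant it is constant, so either $g\equiv0$, meaning $\xi_\lambda$ and $\xi_{\lambda'}$ are \emph{fibrewise orthogonal}, or $\lambda^{-1}\lambda'$ lies in the unimodular eigenvalue group $e(S)$ of $S$. Because the fibres $L^2(\mu_y)$ are separable, no uncountable fibrewise orthonormal family can exist; since $\sigma_R$ is continuous, $\Lambda_0$ is uncountable, so it can survive only if all but countably many ratios $\lambda^{-1}\lambda'$ fall into $e(S)$. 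I would therefore aim to prove that $e(S)$ cannot absorb a positive-$\sigma_R$-measure set of such ratios, and it is exactly here that rfmp and the ergodicity of $R\times S$ must be combined: as the Radon--Nikodym cocycle $\varphi$ of $T$ is $\cc$-measurable, the eigenvalues produced above are controlled by the saturated group $\svf$ and by the Mackey action of $\varphi$, which for $T$ coincides with that for $S$; the absence of eigenvalues of the weakly mixing $R$, together with ergodicity of $R\times S$, should forbid these eigenvalues from being realised in the base along a continuous family. Concretely I would localise them by applying Lemma~\ref{l2} to $\svf$ inside the $S$-action, deducing that the offending ratios form a $\sigma_R$-null set, contradicting $\sigma_R(\Lambda_0)>0$ and forcing $c=0$, whence $F=0$ and $R\times T$ is ergodic. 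The main obstacle is precisely this localisation: transferring the continuum of relative eigenvalues of $T$ into the base through rfmp and annihilating it with the ergodicity of $R\times S$ and the continuity of $\sigma_R$.
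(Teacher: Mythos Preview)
Your argument up to the last paragraph is a self-contained reproof of two facts the paper simply quotes. The spectral-decomposition extraction of a set $\Lambda_0$ of positive $\sigma_R$-measure with $\Lambda_0^{-1}\subset e(T)$ is the nontrivial direction of the standard criterion ``$R\times T$ is ergodic iff $\sigma_R(e(T))=0$'' (cited in the paper from \cite{Qu}); and your fibrewise-orthogonality dichotomy, showing that $\Lambda_0$ meets only countably many cosets of $e(S)$, is exactly the content of Theorem~2 in \cite{Au-Le}. Both parts are essentially correct, modulo a harmless imprecision: the normalized unimodular $\xi_\lambda$ need not inherit $\int_{\pi^{-1}(y)}\xi_\lambda\,d\mu_y=0$ from $\gamma_x$, but you never actually use this.

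The gap is in the last step, where you must show $\sigma_R(c\cdot e(S))=0$ for each $c\in\T$. Your plan to apply Lemma~\ref{l2} ``to $\svf$ inside the $S$-action'' misfires on both counts. The group $\svf$ attached to the Radon--Nikodym cocycle lives in the dual of the value group of that cocycle, not in $\T$, so it is not the saturated subgroup relevant here; and since it is $\sigma_R$ that you want to control, Lemma~\ref{l2} must be applied to the $\Z$-action $R$ on $\zde$, not to $S$. The paper's entire proof is precisely this correct application: $e(S)\subset\T=\widehat\Z$ is saturated (a general fact from \cite{Ho-Me-Pa}); ergodicity of $R\times S$ gives $\sigma_R(e(S))=0$; and since $R$ is weakly mixing it has no nontrivial eigenvalues, so Lemma~\ref{l2} with $\cs=R$ and $\Lambda=e(S)$ forces $\sigma_R(c\cdot e(S))=0$ for every $c\in\T$. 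Summing over the countably many cosets covering $e(T)$ yields $\sigma_R(e(T))=0$, and $R\times T$ is ergodic.
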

\begin{proof} We need to show that $\sigma_R(e(T))=0$, where
$e(T)$ stands for the group of $L^\infty$ eigenvalues ($c\in
e(T)\subset\T$ if for some $f\in L^\infty\xbm$, $f\circ T=c\cdot
f$), see {\em e.g.}\ \cite{Qu}. In view of Theorem~2 in
\cite{Au-Le}, $e(T)$ is the union of  countably many cosets
$c\cdot e(S)$, $e(T)=\bigcup_{i=1}^\infty c_i\cdot e(S)$.  On the
other hand, $\sigma_R(e(S))$=0 and $e(S)$ is saturated (see
\cite{Ho-Me-Pa}). Since $R$ is weakly mixing, in view of
Lemma~\ref{l2}, $\sigma_R(c\cdot e(S))=0$ for each $c\in\T$.
Therefore, $\sigma_R(e(T))=0$ and the result follows.
\end{proof}

Following \cite{Da-Le}, given a non-singular automorphism $S$ of
$\ycn$, to obtain $T$ and $\pi$ so that $(S,\pi)$ is an rfmp
factor of $T$ we must take any ergodic skew product $T=S_\Theta$
on $\xbm=\ycn\ot(X',\cb',\mu')$ where $(X',\cb',\mu')$ is another
probability standard Borel space and $\Theta:X\to {\rm
Aut}(X',\cb',\mu')$ is measurable.

\subsection{Lemma on mixing times of weighted unitary operators}\label{weightedops}
In the study of mixing properties of automorphism of the form
$\tfs$ unitary operators $V_\xi$ defined below will play a crucial
role.

Let $T$ be an automorphism of $\xbm$. Let $\xi:X\to\T$ be a
cocycle. We define a unitary operator $V_\xi$ on $L^2\xbm$ by
setting
$$
V_\xi(f)(x)=\xi(x)\cdot f(Tx)
$$
for each $f\in L^2\xbm$. A sequence $(n_i)\subset\N$,
$n_i\to\infty$ is said to be a {\em mixing sequence} for $V_\xi$
if $V_\xi^{n_i}\to 0$ in the weak operator topology (while $(n_i)$
is a mixing sequence for $T$ if $T^{n_i}$ restricted to
$L^2_0\xbm$ goes to $0$, that is, $T^{n_i}\to \Phi_{\mu\ot\mu}$ in
${\cal J}(T)$).

Denote by $T_\xi$ the Anzai skew product corresponding to $T$ and
$\xi$, {\em i.e.} the automorphism of  $(X\times \T,{\cal
B}\otimes{\cal B}(\T),\mu\otimes\lambda)$ given by
$T_\xi(x,z)=(Tx,\xi(x)z)$, where $\lambda$ stands for the Lebesgue
measure of the circle.

We assume now that $T$ is ergodic.

\begin{Prop}\label{lifttomix}
Assume that $T^{n_i}\to \Phi$ in ${\cal J}(T)$, where
$\Phi=\Phi_\rho\in{\cal J}^e(T)$. Suppose that the $(T\times
T,\rho)$-cocycle $\xi\otimes\ov{\xi}$ is not a coboundary. Then
$(n_i)$ is a mixing sequence for $V_\xi$.
\end{Prop}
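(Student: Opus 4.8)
The plan is to test weak convergence $V_\xi^{n_i}\to 0$ on a dense family of vectors and reduce it to a statement about the Anzai skew product $T_\xi$ over the limiting self-joining $\rho$. First I would recall the standard identification: for $f,h\in L^2\xbm$ and $k\in\Z$, write $f_k(x,z)=z^k f(x)$, $h_k(x,z)=z^k h(x)$ on $X\times\T$; then $\langle V_\xi^{n} f,h\rangle = \langle T_\xi^{n} f_1, h_1\rangle$ (the weight $\xi$ produces exactly the extra factor $\xi^{(n)}$ coming from the $\T$-coordinate of $T_\xi^n$). Since $V_\xi$ preserves each spectral subspace $\{z^k f(x):f\in L^2\xbm\}$ and these span $L^2(X\times\T)$, showing $V_\xi^{n_i}\to 0$ weakly is the same as showing that along $(n_i)$ the operators $T_\xi^{n_i}$ push the first Fourier level to be asymptotically orthogonal to itself, i.e.\ any weak limit $\Psi$ of $(T_\xi^{n_i})$ in $\mathcal J(T_\xi)$ satisfies $\Psi f_1 \perp f_1$ for all $f$; equivalently, the corresponding self-joining of $T_\xi$ restricted to the first Fourier level is ``off-diagonal'', carrying no relative diagonal mass over $X\times X$.

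Next I would pass to a limit self-joining. By compactness of $\mathcal J(T_\xi)$ we may assume (passing to a subsequence, which is harmless for proving weak convergence to $0$) that $T_\xi^{n_i}\to\Psi$ for some $\Psi\in\mathcal J(T_\xi)$, and that the associated joining $\widehat\rho$ of $T_\xi$ projects onto $\rho\in J^e(T)$ under the factor map $X\times\T\to X$ on each coordinate. The key structural point is that $\widehat\rho$ is then a joining of two copies of the $\T$-extension $T_\xi$ sitting over the \emph{ergodic} base joining $\rho$; relative to the $\sigma$-algebra $\cb\ot\cb$ it is described by a measurable family of Markov operators between the circle fibres which, by the $\T$-equivariance coming from the rotation structure of the fibre, is governed by the cocycle $\xi\ot\ov\xi$ over $(T\times T,\rho)$. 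Concretely, the first-Fourier-level component of $\Psi$ is an operator of ``weighted'' type on $L^2(X\times X,\rho)$ attached to the cocycle $(\xi\ot\ov\xi)$, and it has a nonzero $T\times T$-invariant first-level vector (equivalently $\widehat\rho$ charges the relative diagonal at level $1$) \emph{iff} $\xi\ot\ov\xi$ is a coboundary over $(T\times T,\rho)$. Here I would invoke that $\rho$ is ergodic for $T\times T$, so that the relevant fixed-point equation $\xi\ot\ov\xi=\psi/\psi\circ(T\times T)$ with $\psi:X\times X\to\T$ measurable is exactly the coboundary condition in the hypothesis.

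Thus, the hypothesis that $\xi\ot\ov\xi$ is \emph{not} a $(T\times T,\rho)$-coboundary forces the first-level invariant space to be trivial, hence $\Psi f_1\perp f_1$ for every $f$, hence $\langle V_\xi^{n_i}f,f\rangle\to 0$ for all $f$, which (with $f,h$ varying) gives $V_\xi^{n_i}\to 0$ weakly, as desired; since the limit is $0$ regardless of the chosen subsequence, the full sequence converges. The main obstacle I expect is making rigorous the identification of the ``first Fourier level of $\Psi$'' with a weighted operator for $\xi\ot\ov\xi$ over $\rho$, together with the ergodicity argument that the existence of a nonzero invariant vector at that level is equivalent to $\xi\ot\ov\xi$ being a coboundary — this is where one must handle the disintegration of $\widehat\rho$ over $\cb\ot\cb$ carefully and use ergodicity of $\rho$ (via $\rho\in\mathcal J^e(T)$) rather than ergodicity of $T$ alone.
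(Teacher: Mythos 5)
Your plan is correct and follows essentially the same route as the paper: pass to the Anzai skew product $T_\xi$, extract a limit joining $\tilde\rho$ of $T_\xi$ projecting onto $\rho$, observe that the conditional expectation of $z_1\ov{z}_2$ on $\cb\ot\cb$ satisfies the cocycle equation for $\xi\ot\ov{\xi}$ over $(T\times T,\rho)$, and use ergodicity of $\rho$ to force this function to have constant modulus, so that it is either zero (giving mixing along $(n_i)$) or exhibits $\xi\ot\ov{\xi}$ as a coboundary. The step you flag as the main obstacle is handled in the paper exactly as you anticipate, by setting $H=E^{\tilde\rho}(z_1\ov{z}_2\,|\,\cb\ot\cb)$ and computing $\langle V_\xi^{n_i}f_1,f_2\rangle\to\int f_1\ot f_2\cdot H\,d\rho$.
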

\begin{proof}
We can moreover assume that $(T_{\xi})^{n_i}\to
 \widetilde{\Phi}=\Phi_{\tilde{\rho}}$ in ${\cal J}(T_\xi)$.
Given $f_1$, $f_2\in L^2\xbm$, we define  $F_i\in
L^2(X\times\T,\mu\ot\la)$ by $F_i(x,z)=f_i(x)z$, $i=1,2$. Set also
$J(x_1,z_1,x_2,z_2)=z_1\ov{z}_2$, so $J\in
L^2((X\times\T)\times(X\times\T),\tilde{\rho})$. Moreover let,
with some abuse of notation, $H$ denote
$E^{\tilde{\rho}}(J|\cb\ot\cb)$. We have
\begin{equation*}
\begin{split}
\int_{X}(V_\xi)^{n_i}f_1\cdot\ov{f}_2\,d\mu
&=\int_{X}\xi^{(n_i)}\cdot f_1\circ T^{n_i}\cdot \ov{f}_2\,d\mu\\
&=\int_{X\times\T}F_1\circ T_\xi^{n_i}\cdot\ov{F}_2\,d\mu\,d\la\\
&\longrightarrow
\int_{(X\times\T)\times(X\times\T)}\!\!\!\!F_1(x_1,z_1)\ov{F_2(x_2,z_2)}\,
d\widetilde{\rho}((x_1,z_1),(x_2,z_2))\\
&=\int_{(X\times\T)\times(X\times\T)}\!\!\!f_1(x_1)f_2(x_2)z_1\ov{z}_2\,
d\widetilde{\rho}((x_1,z_1),(x_2,z_2))\\
&=\int_{X\times X}f_1\ot f_2\cdot H\,d\rho.
\end{split}
\end{equation*}
We claim now that, for $\rho$-a.a.\ $(x_1,x_2)\in X\times X$,
\beq\label{lm1} \xi(x_1)\ov{\xi(x_2)}H(x_1,x_2)=H(Tx_1,Tx_2). \eeq
Indeed, $J\circ (T_\xi\times T_\xi)=(\xi\ot\ov{\xi})\cdot J$, so
\begin{equation*}
\begin{split}
E^{\widetilde{\rho}}(J|\cb\otimes \cb)\circ (T\times T)
&=E^{\widetilde{\rho}}(J\circ (T_\xi\times T_\xi)|\cb\otimes\cb)\\
&=\xi\ot\ov{\xi}\cdot E^{\widetilde{\rho}}(J|\cb\otimes \cb)
\end{split}
\end{equation*}
and (\ref{lm1}) follows.

Now, ergodicity of $\rho$ implies that $H$ is of constant modulus.
If $H\neq 0$ then from (\ref{lm1}) it follows that
$\xi\ot\ov{\xi}$ is a $(T\times T,\rho)$-coboundary. Otherwise
$\int_X(V_\xi)^{n_i}f_1\cdot \ov{f}_2\,d\mu\to0$ for all $f_1$,
 $f_2\in L^2\xbm$, so $(n_i)$ is a
mixing sequence for $V_{\xi}$.
\end{proof}

\begin{Cor}\label{lm2}
If $T$ is weakly mixing and $(n_i)$ is a mixing sequence for $T$,
then $(n_i)$ is also a mixing sequence for $V_\xi$ whenever $\xi$
is not a quasi-coboundary.
\end{Cor}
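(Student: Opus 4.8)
The plan is to deduce the corollary from Proposition \ref{lifttomix} by checking its hypotheses. Since $T$ is weakly mixing and $(n_i)$ is a mixing sequence for $T$, we have $T^{n_i}\to\Phi_{\mu\ot\mu}$ in ${\cal J}(T)$, and $\mu\ot\mu=\eta_\Phi$ for the Markov operator $\Phi f=(\int_X f\,d\mu)\cdot 1$, which is ergodic; indeed $\mu\ot\mu\in J^e(T)$ precisely because $T$ is weakly mixing, so $T\times T$ is ergodic on $(X\times X,\mu\ot\mu)$. Thus with $\rho=\mu\ot\mu$ we are in the situation of Proposition \ref{lifttomix}, and it remains only to verify that the $(T\times T,\mu\ot\mu)$-cocycle $\xi\ot\ov\xi$ is not a coboundary.

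The key step is therefore the following translation: $\xi\ot\ov\xi$ is a coboundary for $T\times T$ on $(X\times X,\mu\ot\mu)$ if and only if $\xi$ is a quasi-coboundary for $T$. First I would prove the easy direction: if $\xi=c\cdot\psi/\psi\circ T$ for some constant $c\in\T$ and measurable $\psi:X\to\T$, then $\xi(x_1)\ov{\xi(x_2)}=\bigl(\psi(x_1)\ov{\psi(x_2)}\bigr)/\bigl(\psi(Tx_1)\ov{\psi(Tx_2)}\bigr)$, so $\xi\ot\ov\xi$ is a coboundary (the constant $c$ cancels). For the converse — the substantive direction — suppose $\xi(x_1)\ov{\xi(x_2)}=g(x_1,x_2)/g(Tx_1,Tx_2)$ for some measurable $g:X\times X\to\T$. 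Fixing the second coordinate, for $\mu$-a.e.\ $x_2$ the function $h_{x_2}(x_1):=g(x_1,x_2)\xi(x_2)$ would have to satisfy $\xi(x_1)=h_{x_2}(x_1)/h_{x_2}(Tx_1)$ after absorbing the $x_2$-part; more carefully, one reads off that $x_1\mapsto g(x_1,x_2)$ transfers the cocycle $\xi$ to a coboundary twisted by the constant-in-$x_1$ factor $\xi(x_2)$, and a standard Fubini/ergodicity argument (the ratio of two such functions $g(\cdot,x_2)/g(\cdot,x_2')$ is $T$-invariant hence constant a.e.\ by ergodicity of $T$) shows $g(x_1,x_2)=\psi(x_1)\ov\psi(x_2)\cdot(\text{const})$ for a measurable $\psi:X\to\T$, and then plugging back in gives $\xi=c\cdot\psi/\psi\circ T$, i.e.\ $\xi$ is a quasi-coboundary.

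I expect the main obstacle to be the measurability bookkeeping in that converse direction: one must justify choosing a jointly measurable version of $x_1\mapsto g(x_1,x_2)$, apply ergodicity of $T$ on the $x_1$-slices uniformly in $x_2$ (so that the $T$-invariant ratio $g(\cdot,x_2)/g(\cdot,x_2')$ is constant for $\mu\ot\mu$-a.e.\ pair $(x_2,x_2')$), and then extract $\psi$ as a measurable selection — all of which is routine but needs care so that the almost-everywhere statements are compatible across the two coordinates. Once $\xi$ being a quasi-coboundary is shown equivalent to $\xi\ot\ov\xi$ being a $(T\times T,\mu\ot\mu)$-coboundary, the hypothesis "$\xi$ is not a quasi-coboundary" gives exactly what Proposition \ref{lifttomix} needs, and the conclusion that $(n_i)$ is a mixing sequence for $V_\xi$ follows immediately. \bez
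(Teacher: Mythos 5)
Your reduction is exactly the one the paper uses: since $T$ is weakly mixing, $T^{n_i}\to\Phi_{\mu\ot\mu}$ with $\mu\ot\mu\in{\cal J}^e(T)$, and everything rests on the equivalence ``$\xi\ot\ov{\xi}$ is a $(T\times T,\mu\ot\mu)$-coboundary if and only if $\xi$ is a quasi-coboundary''. The paper simply quotes this equivalence (from the Appendix of \cite{Le-Pa}); you attempt to prove it, and the argument you sketch for the substantive direction contains a genuine error. The coboundary equation reads $\xi(x_1)\ov{\xi(x_2)}=g(x_1,x_2)\,\ov{g(Tx_1,Tx_2)}$, so the second coordinate is \emph{also} moved by $T$; you cannot freeze $x_2$. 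Concretely, dividing the equations for $(x_1,x_2)$ and $(x_1,x_2')$ gives
$$
\ov{\xi(x_2)}\,\xi(x_2')=\frac{g(x_1,x_2)}{g(x_1,x_2')}\cdot
\ov{\left(\frac{g(Tx_1,Tx_2)}{g(Tx_1,Tx_2')}\right)},
$$
in which the second factor is evaluated at $(Tx_1,Tx_2,Tx_2')$, not at $(Tx_1,x_2,x_2')$. Hence the ratio $g(\cdot,x_2)\ov{g(\cdot,x_2')}$ is \emph{not} $T$-invariant in the first variable, and the ergodicity step on which your factorization $g(x_1,x_2)=\psi(x_1)\ov{\psi(x_2)}\cdot{\rm const}$ depends collapses. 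This is not a matter of measurability bookkeeping; the identity you invoke is false.

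The standard correct argument for that direction is spectral. The coboundary equation says precisely that $1$ is an eigenvalue of the unitary operator $V_\xi\ot\ov{V_\xi}$ on $L^2(X\times X,\mu\ot\mu)=L^2\xbm\ot L^2\xbm$ (with eigenfunction $g$), where $\ov{V_\xi}h=\ov{\xi}\cdot h\circ T$. The maximal spectral type of $V_\xi\ot\ov{V_\xi}$ is $\si_{V_\xi}\ast\widetilde{\si_{V_\xi}}$, the convolution of $\si_{V_\xi}$ with its reflection, and such a convolution has an atom at $1$ if and only if $\si_{V_\xi}$ has an atom. Hence $V_\xi f=c\cdot f$ for some $c\in\T$ and some $f\neq0$; ergodicity of $T$ forces $|f|$ to be constant, so after normalization $\xi=c\cdot f/f\circ T$, i.e.\ $\xi$ is a quasi-coboundary. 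With that lemma repaired, the rest of your argument --- the easy direction, the identification $\rho=\mu\ot\mu\in{\cal J}^e(T)$, and the application of Proposition~\ref{lifttomix} --- is correct and coincides with the paper's proof.
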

\begin{proof}
We have then $T^{n_i}\to\Phi_{\mu\ot\mu}$ in ${\cal J}(T)$. As $T$
is weakly mixing, $\mu\ot\mu\in{\cal J}^e(T)$, and it is
well-known that $\xi\ot\ov{\xi}$ is a $(T\times
T,\mu\ot\mu)$-coboundary if and only if $\xi$ is a
quasi-coboundary (see {\em e.g.}\ \cite{Le-Pa}, Appendix).
\end{proof}

\subsection{Recurrent cocycles with values in Abelian Polish groups}
The remarks below about recurrent cocycles with values in Polish
Abelian groups are taken directly from the theory of cocycles
taking values in LCA groups \cite{Sch}, we give proofs only for
sake of completeness.

Let $T$ be an ergodic automorphism of $\xbm$. Assume that $A$ is
an Abelian\footnote{We keep going to use multiplicative notation.}
Polish group. Let $\va:X\to A$ be a cocycle. The cocycle $\va$ is
said to be {\em recurrent} if for each $\vep>0$, $B\in{\cal B}$ of
positive measure and each neighbourhood $V$ of $1$, there exists a
positive integer $N$ such that \beq\label{rec1} \mu(B\cap
T^{-N}B\cap[\va^{(N)}\in V])>0. \eeq Suppose that $\psi:X\to A$ is
another cocycle. Then we have the following fact: \beq\label{rec2}
\mbox{\em if $\va$ and $\psi$ are cohomologous and $\va$ is
recurrent, then so is $\psi$}. \eeq Indeed, assume that
$\psi=\va\cdot f\cdot (f\circ T)^{-1}$ for a measurable $f:X\to
A$. Take a set $B\in{\cal B}$ of positive measure. Fix a
neighbourhood $V$ of $1$, and then another neighbourhood $W$ of
$1$ so that $W\cdot W\subset V$. Using measurablity  of $f$, we
can find a measurable subset $B_1\subset B$ of positive measure
such that $f(x)\cdot f(y)^{-1}\in W$ whenever $x$, $y\in B_1$.
Then, for every $N\geq 1$,
$$
B_1\cap T^{-N}B_1\cap[\va^{(N)}\in W]\subset B\cap
T^{-N}B\cap[\psi^{(N)}\in V]
$$
and (\ref{rec2}) follows.

Assume now that $a\in A$ and let $\widetilde{a}$ denote the
corresponding constant cocycle: $\widetilde{a}(x)=a$. Then the
following fact holds: \beq\label{rec3}
\begin{split}
&\mbox{\em $\tilde{a}$ is recurrent if and only if}\\
&\mbox{\em there exists $n_j\to\infty$ such that $a^{n_j}\to1$ in
$A$}.
\end{split}
\eeq Indeed, fix a neighbourhood $V$ of $1$ and apply (\ref{rec1})
with $B=X$ to obtain that $\mu([\tilde{a}^{(N)}\in V])>0$ for some
positive integer $N$. It follows that $a^N\in V$. Letting
$V\to\{1\}$, either $N=N(a,V)\to\infty$ and we are done, or $N$
stays bounded. In the latter case we have $a^N=1$ for some
$N\geq1$ and by taking multiples of this $N$,~(\ref{rec3}) also
follows.

On the other hand, if $a^{n_j}\to 1$ then the sequence of
differences $n_i-n_j$ is a sequence universally good for the
Poincar\'e recurrence and clearly $a^{n_i-n_j}\to1$ when
$i,j\to\infty$. Therefore $\widetilde{a}$ is recurrent.

\section{Lifting mixing properties to Rokhlin cocycle
extensions}\label{TRZY} In this section we will present a
systematic study of mixing properties of automorphisms of the form
$\tfs$. Throughout $T$ is assumed to be an automorphism of $\xbm$,
$\va:X\to G$ a cocycle and $\cs=(S_g)_{g\in G}$ a $G$-action
acting on $\ycn$.

\subsection{Maximal spectral type of \protect$\tfs$}
Let $\{f_n\}_{n\geq0}$ and $\{g_n\}_{n\geq0}$ be  orthonormal
bases in $L^2\xbm$ and $L^2\ycn$ respectively, where $f_0=g_0=1$.
For the maximal spectral type $\stfs$ of $\tfs$ on $L^2_0(X\times
Y,\mu\otimes\nu)$, we take%
\footnote{Up to some abuse of vocabulary, we take as
$\sigma_{\tfs}$ any spectral measure realizing the maximal
spectral type.} \beq\label{e1}
\stfs=\sum_{(m,n)\neq(0,0)}2^{-(m+n)}\si_{f_n\otimes g_m,\tfs}.
\eeq According to the notation of section \ref{weightedops}, given
$\chi\in\widehat{G}$, we denote by $\vhf$ the unitary operator on
$L^2\xbm$ which acts by the formula
$$
(\vhf f)(x)=\chi(\va(x))f(Tx).
$$
Its maximal spectral type, on $L^2\xbm$, is equal to
$$
\si_{\vhf}=\sum_{n\geq0}\frac1{2^n}\;\si_{f_n,\vhf}.
$$
Notice also that the maximal spectral type of $\cs$ on $L^2_0\ycn$
is given by
$$
\si_{\cs}=\sum_{m\geq1}\frac1{2^m}\;\si_{g_m,\cs}
$$
and $\si_T$, the maximal spectral type of $T$ on $L^2_0\xbm$, is
equal to $\sum_{n\geq1}\frac1{2^n}\,\si_{f_n,T}$.

\begin{Lemma}\label{l1}
We have
$\stfs=\si_T+\int_{\widehat{G}}\sigma_{\vhf}\,d\sigma_{\cs}(\chi)$.
Moreover
$$
\sigma_{T_{\varphi,{\cal S}}|_{L^2(X\times Y,\mu\otimes\nu)\ominus
L^2(X,\mu)\ot1_Y}}=\int_{\widehat{G}}\sigma_{\vhf}\,d\sigma_{\cs}(\chi).
$$
\end{Lemma}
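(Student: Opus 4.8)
The plan is to compute the spectral measures $\sigma_{f_n\otimes g_m,\tfs}$ explicitly via the Fourier transform and then reorganize the double sum in (\ref{e1}) into the two pieces of the claimed formula. First I would write out, for $F = f_n\otimes g_m$, the correlation coefficients $\langle \tfs^{k} F, F\rangle = \int_{X\times Y} f_n(T^k x)\,\overline{f_n(x)}\; g_m(S_{\va^{(k)}(x)}y)\,\overline{g_m(y)}\, d\mu(x)\,d\nu(y)$. Integrating in $y$ first, the inner integral is $\widehat{\sigma}_{g_m,\cs}(\va^{(k)}(x)) = \int_{\widehat G}\chi(\va^{(k)}(x))\, d\sigma_{g_m,\cs}(\chi)$. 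Substituting and using Fubini, $\widehat{\sigma}_{f_n\otimes g_m,\tfs}(k) = \int_{\widehat G}\Bigl(\int_X \chi(\va^{(k)}(x))\, f_n(T^k x)\,\overline{f_n(x)}\, d\mu(x)\Bigr) d\sigma_{g_m,\cs}(\chi)$. The inner integral is precisely $\langle \vhf^{k} f_n, f_n\rangle = \widehat{\sigma}_{f_n,\vhf}(k)$, because $\vhf^{k}f_n(x) = \chi(\va^{(k)}(x)) f_n(T^k x)$ by the cocycle identity. Hence $\widehat{\sigma}_{f_n\otimes g_m,\tfs}(k) = \int_{\widehat G}\widehat{\sigma}_{f_n,\vhf}(k)\, d\sigma_{g_m,\cs}(\chi)$, which by uniqueness of Fourier transforms (applied on $\Z$, i.e. on $\widehat{\T}$ for the circle-valued unitary) gives the measure identity $\sigma_{f_n\otimes g_m,\tfs} = \int_{\widehat G}\sigma_{f_n,\vhf}\, d\sigma_{g_m,\cs}(\chi)$.

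Next I would split the sum in (\ref{e1}) according to whether $m=0$ or $m\ge 1$. The terms with $m=0$, $n\ge 1$ contribute $\sum_{n\ge1}2^{-n}\sigma_{f_n\otimes 1,\tfs}$; but $f_n\otimes 1$ lives in $L^2(X,\mu)\otimes 1_Y$ on which $\tfs$ acts as $T$, so $\sigma_{f_n\otimes 1,\tfs} = \sigma_{f_n,T}$, and this piece sums to $\sigma_T$. For the terms with $m\ge 1$ (any $n\ge 0$), I substitute the identity from the first paragraph and the normalizations $\sigma_{\vhf} = \sum_{n\ge0}2^{-n}\sigma_{f_n,\vhf}$ and $\sigma_{\cs} = \sum_{m\ge1}2^{-m}\sigma_{g_m,\cs}$:
\[
\sum_{m\ge1}\sum_{n\ge0} 2^{-(m+n)}\int_{\widehat G}\sigma_{f_n,\vhf}\, d\sigma_{g_m,\cs}(\chi)
= \int_{\widehat G}\Bigl(\sum_{n\ge0}2^{-n}\sigma_{f_n,\vhf}\Bigr)\, d\Bigl(\sum_{m\ge1}2^{-m}\sigma_{g_m,\cs}\Bigr)(\chi)
= \int_{\widehat G}\sigma_{\vhf}\, d\sigma_{\cs}(\chi).
\]
Adding the two pieces yields $\stfs = \sigma_T + \int_{\widehat G}\sigma_{\vhf}\, d\sigma_{\cs}(\chi)$. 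The ``moreover'' claim is then immediate: $L^2(X\times Y,\mu\otimes\nu)\ominus (L^2(X,\mu)\otimes 1_Y)$ is the closed span of the $f_n\otimes g_m$ with $m\ge 1$, which is $\tfs$-invariant, and the maximal spectral type there is realized by $\sum_{m\ge1,\,n\ge0}2^{-(m+n)}\sigma_{f_n\otimes g_m,\tfs}$, which we just identified as $\int_{\widehat G}\sigma_{\vhf}\, d\sigma_{\cs}(\chi)$.

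The steps are essentially routine once the key correlation identity is in place; the one point requiring care is the measurability and integrability needed to justify Fubini and to make sense of $\int_{\widehat G}\sigma_{\vhf}\, d\sigma_{\cs}(\chi)$ as a genuine measure on $\widehat{\T}$'s dual (the circle). Concretely one must check that $(\chi,k)\mapsto \widehat{\sigma}_{f_n,\vhf}(k)$ is jointly measurable and bounded by $1$ in $k$, so that $\chi\mapsto\sigma_{f_n,\vhf}$ is a measurable family of probability measures and the integral against the finite measure $\sigma_{g_m,\cs}$ is well defined; this follows from continuity of the unitary representation $\cs$ and standard disintegration arguments, so I expect no real obstacle, only bookkeeping. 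I would mention that the identification of the restricted spectral type also uses that an orthonormal basis of the complement is obtained from the $f_n$ and the $g_m$, $m\ge1$, so no spectral mass is lost.
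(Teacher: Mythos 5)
Your proposal is correct and follows essentially the same route as the paper: compute $\widehat{\sigma}_{f\otimes g,\tfs}(k)$ by integrating in $y$ first, recognize the inner integral as $\widehat{\sigma}_{f,\vhf}(k)$ via Fubini to get $\sigma_{f\otimes g,\tfs}=\int_{\widehat{G}}\sigma_{f,\vhf}\,d\sigma_{g,\cs}(\chi)$, and then split the sum in (\ref{e1}) over $m=0$ versus $m\geq1$. The only difference is that you spell out the measurability and Fubini bookkeeping, which the paper leaves implicit.
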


\begin{proof} Firstly, we calculate the spectral measure of
$f\otimes g$ for $f\in L^2\xbm$, $g\in L^2\ycn$. For each $k\in
\Z$ we have
\begin{equation*}
\begin{split}
\widehat{\si}_{f\otimes g,\tfs}(k)
&=\int_{X\times Y}(f\otimes g)\circ(\tfs)^k\cdot\ov{f\otimes g}\,d(\mu\otimes \nu)\\
&=\int_X f(T^kx)\ov{f(x)}\left(\int_Y
g(S_{\va^{(k)}(x)}(y))\ov{g(y)}
\,d\nu(y)\right)\,d\mu(x)\\
&=\int_X f(T^kx)\ov{f(x)}\left(\int_{\widehat{G}}
\chi(\va^{(k)}(x))
\,d\si_{g,\cs}(\chi)\right)\,d\mu(x)\\
&=\int_{\widehat{G}}\left( \int_X
\chi(\va^{(k)}(x))f(T^kx)\ov{f(x)}
\,d\mu(x)\right)\,d\si_{g,\cs}(\chi).\\
&=\int_{\widehat{G}}\widehat{\si}_{f,\vhf}(k)\,d\si_{g,\cs}(\chi).
\end{split}
\end{equation*}
It follows that \beq\label{sifotg} \si_{f\ot g,\tfs}=
\int_{\widehat{G}}\si_{f,\vhf}\,d\si_{g,\cs}(\chi). \eeq
Therefore, in view of (\ref{e1}) and (\ref{sifotg})
\begin{equation*}
\begin{split}
\si_{\tfs}&=\sum_{(n,m)\neq(0,0)}\frac1{2^{n+m}}\si_{f_n\ot g_m,\tfs}\\
&=\int_{\widehat{G}}\sum_{(n,m)\neq(0,0)}\frac1{2^{n+m}}
\si_{f_n,\vhf}\,d\si_{g_m,\cs}(\chi)\\
&=\sum_{m\geq1}\frac1{2^m}\int_{\widehat{G}}\sum_{n\geq0}\frac1{2^n}
\si_{f_n,\vhf}\,d\si_{g_m,\cs}(\chi)
+\int_{\widehat{G}}\sum_{n\geq1}
\frac1{2^n}\si_{f_n,\vhf}\,d\si_{g_0,\cs}(\chi)\\
&=
\sum_{m\geq1}\frac1{2^m}\int_{\widehat{G}}\si_{\vhf}\,d\si_{g_m,\cs}(\chi)
+\sum_{n\geq1}\frac1{2^n}\si_{f_n,T}\\
&=\int_{\widehat{G}}\si_{\vhf}\,d\si_{\cs}(\chi)+\si_T.
\end{split}
\end{equation*}
The result immediately follows.
\end{proof}

\subsection{Maximal spectral
type of \protect$\tfs$ on subspaces of the form
\protect$L^2\xbm\otimes G(g)$}

Assume now that $g\in L^2_0\ycn$. Recall that by $G(g)$ we denote
the cyclic space generated by $g$, {\em i.e.}
$$
G(g)=\ov{\mbox{span}}\{g\circ S_h:\:h\in G\}.
$$
The space $L^2\xbm\otimes G(g)$ is $\tfs$-invariant. Indeed, we
can naturally identify $L^2(X\times Y, \cb\otimes\cc,
\mu\otimes\nu)$ with the space $L^2\bigl(X, \cb,
\mu;\,L^2\ycn\bigr)$ of square-integrable $L^2\ycn$-valued
functions on $\xbm$, and then $L^2\xbm\otimes G(g)$ becomes the
subspace of functions taking $\mu$-a.e.\ their values in $G(g)$.
Now, if $F(x,\cdot)\in G(g)$, then
$(F\circ\tfs)(x,\cdot)=F(Tx,\cdot)\circ S_{\va(x)}\in G(g)$.

For each $h\in G$, as $\si_{g\circ S_h,\cs}=\si_{g,\cs}$, we have
from (\ref{sifotg})
$$\sigma_{f\otimes g,\tfs}=\sigma_{f\otimes(g\circ S_h),\tfs}.$$
Let $\{f_n\}_{n\geq0}$ be an orthonormal base in $L^2\xbm$ with
$f_0=1$. It is then clear that \beq\label{zima2}
\sigma_{\tfs|_{L^2(X,\mu)\otimes
G(g)}}=\sum_{n\geq0}2^{-n}\sigma_{f_n\otimes g,\tfs}. \eeq
Therefore, by the proof of Lemma~\ref{l1}, we obtain the
following.

\begin{Lemma}\label{zima3}
We have
$$
\sigma_{\tfs|_{L^2(X,\mu)\otimes G(g)}}
=\int_{\widehat{G}}\sigma_{\vhf}\,d\sigma_{g,\cs}(\chi).
$$
\bez
\end{Lemma}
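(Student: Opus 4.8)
The plan is to piggyback on the computation already carried out in the proof of Lemma~\ref{l1}, isolating the contribution of the single cyclic space $G(g)$ rather than summing over an orthonormal basis of $L^2_0\ycn$. The key observation is that formula~(\ref{sifotg}), namely $\sigma_{f\ot g,\tfs}=\int_{\widehat{G}}\sigma_{f,\vhf}\,d\sigma_{g,\cs}(\chi)$, holds for \emph{this particular} $g$, and that the spectral measure $\sigma_{g,\cs}$ is (a representative of) the maximal spectral type of $\cs$ restricted to $G(g)$. So $\int_{\widehat G}\sigma_{\vhf}\,d\sigma_{g,\cs}(\chi)$ is the natural candidate for the maximal spectral type of $\tfs$ on $L^2\xbm\ot G(g)$.

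First I would justify~(\ref{zima2}). The space $L^2\xbm\ot G(g)$ is $\tfs$-invariant, as noted in the text just above the statement. Pick an orthonormal basis $\{f_n\}_{n\geq 0}$ of $L^2\xbm$ with $f_0=1$, and an orthonormal basis $\{h_k\}_{k\geq 0}$ of $G(g)$ with $h_0$ proportional to $g$; then $\{f_n\ot h_k\}$ is an orthonormal basis of $L^2\xbm\ot G(g)$, so its maximal spectral type is $\sum_{n,k}2^{-(n+k)}\sigma_{f_n\ot h_k,\tfs}$. Now each $h_k\in G(g)$, so $\sigma_{h_k,\cs}\ll\sigma_{g,\cs}$, and conversely $\sigma_{g,\cs}\ll\sum_k 2^{-k}\sigma_{h_k,\cs}$ since $g\in\ov{\rm span}\{h_k\}$; hence the maximal spectral type of $\cs|_{G(g)}$ equals that of (the measure) $\sigma_{g,\cs}$. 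Feeding this equivalence through~(\ref{sifotg}) — which is linear and monotone in the second variable — shows that $\sum_{n,k}2^{-(n+k)}\sigma_{f_n\ot h_k,\tfs}$ and $\sum_n 2^{-n}\sigma_{f_n\ot g,\tfs}$ define the same maximal spectral type, which is exactly~(\ref{zima2}). The only point requiring a word of care here is that ``$G(g)$'' and ``$\sigma_{g,\cs}$'' carry the same spectral information, i.e.\ that the maximal spectral type of a cyclic representation is realized by the generating vector; this is standard spectral theory of $G$-actions (cf.\ \cite{Ka-Th}, \cite{Le}).

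Finally I would combine~(\ref{zima2}) with~(\ref{sifotg}):
$$
\sigma_{\tfs|_{L^2(X,\mu)\ot G(g)}}
=\sum_{n\geq 0}2^{-n}\sigma_{f_n\ot g,\tfs}
=\sum_{n\geq 0}2^{-n}\int_{\widehat G}\sigma_{f_n,\vhf}\,d\sigma_{g,\cs}(\chi)
=\int_{\widehat G}\Bigl(\sum_{n\geq 0}2^{-n}\sigma_{f_n,\vhf}\Bigr)d\sigma_{g,\cs}(\chi),
$$
where the interchange of sum and integral is legitimate since all measures involved are nonnegative (monotone convergence), and by definition $\sum_{n\geq 0}2^{-n}\sigma_{f_n,\vhf}=\sigma_{\vhf}$ is the maximal spectral type of $\vhf$ on $L^2\xbm$. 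This yields the asserted formula. I do not anticipate a genuine obstacle: the whole argument is a bookkeeping refinement of the proof of Lemma~\ref{l1}, and the only mildly delicate point is the identification of the maximal spectral type of $\cs$ on the cyclic subspace $G(g)$ with the class of $\sigma_{g,\cs}$, which I would dispatch in one sentence.
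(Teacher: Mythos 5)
Your proposal is correct and follows essentially the same route as the paper: establish~(\ref{zima2}) by reducing the maximal spectral type on $L^2\xbm\otimes G(g)$ to the single sum $\sum_{n\geq0}2^{-n}\sigma_{f_n\otimes g,\tfs}$, then feed it through~(\ref{sifotg}) and interchange sum and integral as in the proof of Lemma~\ref{l1}. The only cosmetic difference is that the paper justifies~(\ref{zima2}) by noting $\sigma_{f\otimes g,\tfs}=\sigma_{f\otimes(g\circ S_h),\tfs}$ for all $h\in G$ and declaring the rest clear, whereas you spell out the same point via an orthonormal basis of $G(g)$ and the equivalence of $\sigma_{g,\cs}$ with the maximal spectral type of $\cs|_{G(g)}$.
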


\subsection{Ergodicity of $\tfs$}
We assume here that $T$ is ergodic. Let us first notice that, then
\beq\label{e2} \chi\in\lf\;\;\mbox{\em if and only
if}\;\;\si_{\vhf}(\{1\})>0 \eeq or, more generally, that

\begin{equation}\label{e3}
\begin{split}
&\mbox{\em The cocycle}\;\chi\circ\va\;\mbox{\em is cohomologous to}\  e^{2\pi it}\\
&\mbox{\em if and only if }\si_{\vhf}(\{e^{2\pi it}\})>0.
\end{split}
\end{equation}
Indeed, $\si_{\vhf}(\{e^{2\pi it}\})>0$ if and only if $e^{2\pi i
t}$ is an eigenvalue of $\vhf$ and any eigenfunction corresponding
to this eigenvalue will have constant modulus and so, up to
normalization, be a transfer function $j$ in the cohomology
equation $\chi\circ\va=e^{2\pi it}\cdot j/j\circ T$.

The result below has already been proved in \cite{Le-Le}. We give
however a shorter proof.

\begin{Prop}[\cite{Le-Le}]\label{p1}
$\tfs$ is ergodic if and only if $T$ is ergodic and\goodbreak
$\si_{\cs}(\lf)=0$.
\end{Prop}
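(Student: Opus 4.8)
The plan is to use the spectral description of $\tfs$ established in Lemma~\ref{l1}, which gives
$$
\sigma_{\tfs|_{L^2(X\times Y,\mu\ot\nu)\ominus L^2\xbm\ot 1_Y}}=\int_{\widehat G}\sigma_{\vhf}\,d\sigma_{\cs}(\chi),
$$
together with the characterization~(\ref{e2}) that $\chi\in\lf$ precisely when $\sigma_{\vhf}(\{1\})>0$. Since $\tfs$ acts on the orthogonal decomposition $L^2(X\times Y)=\bigl(L^2\xbm\ot 1_Y\bigr)\oplus\bigl(L^2(X\times Y)\ominus L^2\xbm\ot 1_Y\bigr)$, and the restriction to the first summand is just the unitary $T$, the automorphism $\tfs$ is ergodic if and only if $T$ is ergodic (equivalently, $1$ is a simple eigenvalue of $T$) and, in addition, $\tfs$ has no non-constant invariant function supported on the complementary subspace — that is, $\sigma_{\tfs|_{\cdots}}(\{1\})=0$.

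The key step is then to compute $\sigma_{\tfs|_{\cdots}}(\{1\})$ from the integral representation. Assuming $T$ ergodic, I would write
$$
\sigma_{\tfs|_{\cdots}}(\{1\})=\int_{\widehat G}\sigma_{\vhf}(\{1\})\,d\sigma_{\cs}(\chi),
$$
which is legitimate since $\chi\mapsto\sigma_{\vhf}(\{1\})$ is a nonnegative Borel function of $\chi$ (measurability follows from the disintegration over the character group and the fact that point masses at $1$ can be recovered as a weak-$*$ Cesàro limit, $\sigma_{\vhf}(\{1\})=\lim_N\frac1N\sum_{k=0}^{N-1}\widehat\sigma_{\vhf}(k)$, each term being jointly measurable in $\chi$). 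Now the integral on the right vanishes if and only if $\sigma_{\vhf}(\{1\})=0$ for $\sigma_{\cs}$-a.e.\ $\chi$, which by~(\ref{e2}) means exactly that $\sigma_{\cs}$-a.e.\ $\chi$ lies outside $\lf$, i.e.\ $\sigma_{\cs}(\lf)=0$. Conversely, if $\sigma_{\cs}(\lf)>0$ then on a set of positive $\sigma_{\cs}$-measure the integrand $\sigma_{\vhf}(\{1\})$ is strictly positive, so $\sigma_{\tfs|_{\cdots}}(\{1\})>0$, giving a non-constant $\tfs$-invariant function and hence non-ergodicity of $\tfs$.

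Combining the two summands: $\tfs$ is ergodic iff $T$ is ergodic and $\sigma_{\tfs|_{\cdots}}(\{1\})=0$, iff $T$ is ergodic and $\sigma_{\cs}(\lf)=0$. I expect the main obstacle to be the measurability and Fubini-type justification for pulling the point-mass evaluation inside the integral $\int\sigma_{\vhf}\,d\sigma_{\cs}(\chi)$ — one has to check carefully that $\chi\mapsto\sigma_{\vhf}$ is measurable as a map into the space of measures on $\T$ (or equivalently that $(\chi,k)\mapsto\widehat\sigma_{\vhf}(k)=\int_X\chi(\va^{(k)}(x))f(T^kx)\ov{f(x)}\,d\mu(x)$ is jointly measurable, which is immediate from the formula), and then that evaluation at the atom $\{1\}$ commutes with integration in $\chi$; the ergodicity of $T$ is used precisely to guarantee that the only invariant functions in $L^2\xbm\ot 1_Y$ are the constants and that~(\ref{e2}) applies.
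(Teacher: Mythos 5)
Your proof is correct and follows essentially the same route as the paper's: it reduces ergodicity of $\tfs$ to the absence of an atom at $1$ in $\stfs$, invokes Lemma~\ref{l1} to write the relevant part of the spectral type as $\int_{\widehat G}\sigma_{\vhf}\,d\sigma_{\cs}(\chi)$, and concludes via the characterization~(\ref{e2}) of $\lf$. The measurability/Fubini point you flag is harmless and in fact immediate, since the measure $\int_{\widehat G}\sigma_{\vhf}\,d\sigma_{\cs}(\chi)$ is by definition evaluated on a Borel set (in particular on $\{1\}$) by integrating $\chi\mapsto\sigma_{\vhf}(\{1\})$ against $\sigma_{\cs}$.
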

\begin{proof} It is clearly necessary that $T$ be ergodic.
Then, by Lemma~\ref{l1}, $\si_{\tfs}(\{1\})=0$ if and only  if
$\si_{\vhf}(\{1\})=0$ for $\si_{\cs}$-a.e.\ $\chi\in\widehat{G}$
and therefore, in view of~(\ref{e2}), if and only if
$\si_{\cs}(\lf)=0$.
\end{proof}

\begin{Remark}\em Let us notice that $\si_{\cs}(\lf)=0$ implies
that $\si_{\cs}(\{1\})=0$. Indeed a necessary condition for
ergodicity of $\tfs$ is the ergodicity property of $\cs$ itself.
\end{Remark}

\subsection{Eigenvalues of \protect$\tfs$}
Assume now that $\tfs$ is ergodic. We will determine its
eigenvalues (and eigenfunctions). Let us fix $t\in[0,1)$ and set
$$
A_t=\{\chi\in\widehat{G}:\: \chi\circ\va\;\mbox{is cohomologous
to}\;e^{2\pi it}\}.
$$
Notice that $A_t\subset\svf$ and that if $\chi\in A_t$ and
$\chi_1\in\lf$ then $\chi\chi_1\in A_t$. Moreover, if
$\chi_1,\chi_2\in A_t$ belong to $A_t$ then $\chi_1\ov{\chi}_2\in
\lf$. It follows that $A_t$ is a coset of $\lf$.

Suppose that $e^{2\pi it}$ is an eigenvalue of $\tfs$, {\em i.e.}\
$\si_{\tfs}(\{e^{2\pi it}\})>0$ and let $F$ be a corresponding
eigenfunction. We shall assume that $F$ is not a function of $x$
alone (otherwise $F$ is an eigenfunction of $T$ and the result
below is trivial). Then there exists $g\in L^2_0\ycn$ such that
$F$ is not orthogonal to $L^2\xbm\otimes G(g)$. Since the spectral
measure of $F$ is the Dirac measure at $e^{2\pi it}$, it follows
from Lemma~\ref{zima3} that
$$
\int_{\widehat{G}}\sigma_{V_{\chi\circ\varphi}}\,d\sigma_{g,\cs}(\chi)
(\{e^{2\pi it}\})>0.
$$
The latter occurs if and only if
$\si_{g,\cs}(\{\chi\in\widehat{G}:
\:\si_{V_{\chi\circ\varphi}}(\{e^{2\pi it}\})>0\})>0$, that is, by
(\ref{e3}), if and only if $\si_{g,\cs}(A_t)>0$. Now  $A_t$ is a
coset $\chi_0\lf$ of $\lf$ and there exists then a non-zero
$g_1\in G(g)\cap H_{\chi_0\lf}$. According to Lemma~\ref{l2} (and
Theorem~\ref{p4}), since $\tfs$ is ergodic and thus
$\si_\cs(\lf)=0$, it follows that $g_1$ is an eigenfunction
corresponding to an eigenvalue $\chi\in A_t$.

Let now $f$ be a measurable function of modulus 1 satisfying
$\chi\circ \varphi\cdot f\circ T=e^{2\pi it}\cdot f$ $\mu$-a.e. As
$g_1\circ S_{\va(x)}=\chi(\va(x))\cdot g_1$, we get
$$
(f\otimes g_1)\circ \tfs=(\chi\circ\va\cdot f\circ T)\otimes g_1
=e^{2\pi it}\cdot(f\otimes g_1).
$$
So, $f\otimes g_1$ is an eigenfunction of $\tfs$ corresponding to
$e^{2\pi it}$ and, since $\tfs$ is ergodic, $F$ can be different
from $f\otimes g_1$ only by a multiplicative constant. Therefore
we have proved the following.

\begin{Prop}\label{zima4}
Assume that $\tfs$ is ergodic. Then the  eigenfunctions of $\tfs$
are the functions of  the form $f\otimes g$, where
$\chi\circ\varphi=e^{2\pi it}\cdot f/f\circ T$, $\chi$ is an
eigenvalue of $\cs$ and $g$ is an eigenfunction corresponding to
$\chi$. In particular, $e^{2\pi it}$ ($t\in[0,1)$) is an
eigenvalue of $\tfs$ if and only if there exists an eigenvalue of
$\cs$ in $A_t$. \bez
\end{Prop}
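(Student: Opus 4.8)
The plan is to follow closely the argument already sketched in the paragraphs preceding the statement, organizing it into a clean ``only if'' / ``if'' structure. For the ``only if'' direction, suppose $e^{2\pi it}$ is an eigenvalue of $\tfs$ with eigenfunction $F$. First I would dispose of the trivial case: if $F$ depends on $x$ alone, then $F$ is an eigenfunction of $T$, which is of the form $f\otimes 1$ with $1$ the (trivial) eigenfunction of $\cs$ for the eigenvalue $\chi=1_{\widehat{G}}\in A_t$, so the conclusion holds. Otherwise, $F$ is not orthogonal to $L^2\xbm\otimes G(g)$ for some $g\in L^2_0\ycn$. Since $\si_{F,\tfs}=\delta_{e^{2\pi it}}$, Lemma~\ref{zima3} forces $\int_{\widehat{G}}\si_{\vhf}(\{e^{2\pi it}\})\,d\si_{g,\cs}(\chi)>0$, hence $\si_{g,\cs}(\{\chi:\si_{\vhf}(\{e^{2\pi it}\})>0\})>0$, which by~(\ref{e3}) means $\si_{g,\cs}(A_t)>0$.

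Next I would invoke the structural facts established just before the statement: $A_t$ is a coset $\chi_0\lf$ of $\lf$, and $\lf$ is saturated by Theorem~\ref{p4}. Because $\si_{g,\cs}(\chi_0\lf)>0$ and (since $\tfs$ is ergodic) $\si_{\cs}(\lf)=0$ by Proposition~\ref{p1}, Lemma~\ref{l2} applies to the $G$-action $\cs$ with the saturated subgroup $\lf$: there is exactly one eigenvalue $\chi$ of $\cs$ in $\chi_0\lf=A_t$, and $H_{\chi_0\lf}=G(g)\cap H_{\chi_0\lf}$ contains a nonzero eigenfunction $g_1$ corresponding to $\chi$. Picking a measurable $f:X\to\T$ with $\chi\circ\va\cdot f\circ T=e^{2\pi it}\cdot f$ (which exists precisely because $\chi\in A_t$), one checks directly that $(f\otimes g_1)\circ\tfs=(\chi\circ\va\cdot f\circ T)\otimes g_1=e^{2\pi it}(f\otimes g_1)$, so $f\otimes g_1$ is an eigenfunction of $\tfs$ for $e^{2\pi it}$; by ergodicity of $\tfs$ the eigenspace is one-dimensional, so $F$ agrees with $f\otimes g_1$ up to a scalar. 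This simultaneously proves the description of eigenfunctions and, reading it off, that every eigenvalue $e^{2\pi it}$ of $\tfs$ produces an eigenvalue $\chi\in A_t$ of $\cs$.

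The ``if'' direction is the easy converse, already contained in the above: given an eigenvalue $\chi$ of $\cs$ lying in $A_t$, with eigenfunction $g$, choose $f$ of modulus $1$ solving $\chi\circ\va=e^{2\pi it}\cdot f/f\circ T$; then the same computation shows $f\otimes g$ is an eigenfunction of $\tfs$ for $e^{2\pi it}$. I expect no serious obstacle here: the only delicate point is the application of Lemma~\ref{l2}, which needs the hypotheses $\si_{\cs}(\lf)=0$ and $\si_{\cs}(\chi_0\lf)>0$ — the first comes from ergodicity of $\tfs$ via Proposition~\ref{p1}, and the second must be extracted from $\si_{g,\cs}(A_t)>0$ using $\si_{g,\cs}\ll\si_{\cs}$. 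Everything else is the routine verification that the explicitly written functions $f\otimes g$ are eigenfunctions and the standard fact that ergodicity makes eigenspaces one-dimensional.
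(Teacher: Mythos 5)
Your proposal reproduces the paper's own argument essentially verbatim: the reduction to $L^2\xbm\otimes G(g)$, the use of Lemma~\ref{zima3} and (\ref{e3}) to get $\si_{g,\cs}(A_t)>0$, the identification of $A_t$ as a coset of the saturated group $\lf$, the application of Lemma~\ref{l2} (with $\si_{\cs}(\lf)=0$ coming from ergodicity via Proposition~\ref{p1}), and the final direct computation plus one-dimensionality of eigenspaces. It is correct and takes the same route as the paper.
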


\subsection{Weak mixing and relative weak mixing}
A characterization of the weak mixing property for $\tfs$ is a
direct corollary of Proposition~\ref{zima4}.

\begin{Cor}\label{c7}
$\tfs$ is weakly mixing if and only if it is ergodic, $T$ is
weakly mixing and $\cs$ has no eigenvalues in $\svf$.\bez
\end{Cor}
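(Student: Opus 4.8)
The plan is to derive Corollary~\ref{c7} directly from Proposition~\ref{zima4}, using the fact that weak mixing of an ergodic automorphism is equivalent to the absence of non-trivial eigenvalues. First I would recall that $\tfs$ is weakly mixing if and only if it is ergodic and has no eigenvalue $e^{2\pi it}$ with $t\in[0,1)$ other than the trivial one $t=0$ with constant eigenfunctions; this is the standard spectral characterization of weak mixing, applicable since $\tfs$ is assumed ergodic.

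Next I would invoke Proposition~\ref{zima4}: assuming $\tfs$ is ergodic, $e^{2\pi it}$ is an eigenvalue of $\tfs$ if and only if there is an eigenvalue of $\cs$ lying in the coset $A_t=\{\chi:\chi\circ\va \text{ is cohomologous to } e^{2\pi it}\}$. So $\tfs$ has a non-trivial eigenvalue precisely when one of two things happens: either $T$ itself has a non-trivial eigenvalue (corresponding to eigenfunctions depending on $x$ alone, i.e.\ the case $g=1$, or more precisely an eigenvalue of $\cs$ equal to the trivial character $\chi=1$ sitting in some $A_t$ with $t\neq0$, which forces $\chi\circ\va=1$ to be cohomologous to $e^{2\pi it}$, meaning $e^{2\pi it}$ is an eigenvalue of $T$), or $\cs$ has a non-trivial eigenvalue $\chi$ with $\chi\in A_t$ for some $t$, which by definition of $A_t$ and of $\svf$ means $\chi\in\svf$.

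Putting this together: $\tfs$ is weakly mixing $\iff$ $\tfs$ is ergodic and $\tfs$ has no non-trivial eigenvalue $\iff$ $\tfs$ is ergodic, $T$ has no non-trivial eigenvalue (i.e.\ $T$ is weakly mixing, using that ergodicity of $T$ follows from ergodicity of $\tfs$), and $\cs$ has no eigenvalue in $\bigcup_t A_t$. Since $\bigcup_{t\in[0,1)} A_t=\svf$ (each $A_t$ is a coset of $\lf$ inside $\svf$, and conversely any $\chi\in\svf$ satisfies $\chi\circ\va=c\cdot\xi/\xi\circ T$ for some $c=e^{2\pi it}$, so $\chi\in A_t$), the condition ``$\cs$ has no eigenvalue in $\bigcup_t A_t$'' is exactly ``$\cs$ has no eigenvalue in $\svf$.'' One should be a little careful to include the eigenvalue $\chi=1$ of $\cs$ (which always exists, with constant eigenfunctions) in the bookkeeping: $\chi=1\in A_t$ for $t\neq 0$ exactly when $e^{2\pi it}$ is a non-trivial eigenvalue of $T$, so this case is subsumed under ``$T$ is weakly mixing'' precisely when one notes $1\in\svf$ always and the relevant non-triviality is about $T$.

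The only mild obstacle is handling this interplay cleanly — separating the contribution of eigenvalues of $T$ (the trivial-character case $\chi=1$) from genuinely new eigenvalues coming from non-trivial eigenvalues of $\cs$ — and verifying the set identity $\bigcup_{t\in[0,1)}A_t=\svf$. Both are essentially definitional once Proposition~\ref{zima4} is in hand, so the corollary follows with no real additional work.
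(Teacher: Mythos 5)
Your argument is correct and follows exactly the route the paper intends: the paper states Corollary~\ref{c7} as a direct consequence of Proposition~\ref{zima4} without writing out details, and your derivation (weak mixing $=$ ergodicity $+$ no non-trivial eigenvalues, eigenvalues of $\tfs$ located via the cosets $A_t$, the identity $\bigcup_t A_t=\svf$, and the trivial-character case absorbed into weak mixing of $T$) is precisely that omitted verification. The only point worth making explicit in your bookkeeping is that a non-trivial eigenvalue of $\cs$ lying in $A_0=\lf$ would only produce the trivial eigenvalue $1$ of $\tfs$, but this case cannot occur since ergodicity of $\tfs$ forces $\si_{\cs}(\lf)=0$.
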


\begin{Remark}\em
Notice that this corollary generalizes the well-known criterion
for weak mixing property of Abelian compact group extensions.
\end{Remark}

Let us pass to a characterization of the relative weak mixing
property. We still assume that $\tfs$ is ergodic.

Let us first notice that the relative product of $\tfs$ with
itself over the factor $T$ is isomorphic to
$T_{\va,\cs\times\cs}$, where $\cs\times\cs$ stands for the
diagonal action $g\mapsto S_g\times S_g$ of $G$ on $(Y\times
Y,\cc\ot\cc,\nu\ot\nu)$. So $\tfs$ is relatively weakly mixing
over $T$ if and only if $T_{\va,\cs\times\cs}$ is ergodic.

Since $\si_{\cs\times\cs}=\si_\cs+\si_{\cs}\ast\si_{\cs}$, it
follows from Proposition~\ref{p1} that $\tfs$ is relatively weakly
mixing over $T$ if and only if
$\si_{\cs}(\lf)+\si_{\cs}\ast\si_{\cs}(\lf)=0$. The latter
statement is equivalent to saying that $\si_{\cs}(\chi\lf)=0$ for
each $\chi\in\widehat{G}$. This has already been proved in
\cite{Le-Le} but now we have Lemma~\ref{l2} at our disposal which
finally improves and clarifies the result: Since $\tfs$ is
ergodic, we have $\si_{\cs}(\lf)=0$, and $\si_{\cs}(\chi_0\lf)>0$
for some $\chi_0$ if and only if $\cs$ has an eigenvalue.

\begin{Prop}\label{p5}
$\tfs$ is relatively weakly mixing over $T$ if and only if it is
ergodic and $\cs$ is weakly mixing.\bez
\end{Prop}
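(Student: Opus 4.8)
The plan is to follow the reduction already spelled out in the paragraph preceding the statement and to fill in the two nontrivial pieces. First I would record the identification: the relative product $\tfs\otimes_T\tfs$ is isomorphic to $T_{\va,\cs\times\cs}$ acting on $(X\times Y\times Y,\mu\ot\nu\ot\nu)$, where $\cs\times\cs=(S_g\times S_g)_{g\in G}$. Since $T$ is ergodic (being a factor of the ergodic $\tfs$), Proposition \ref{p1} applies to $T_{\va,\cs\times\cs}$: it is ergodic precisely when $\si_{\cs\times\cs}(\lf)=0$. Now $\si_{\cs\times\cs}$, the maximal spectral type of the diagonal $G$-action on $L^2_0(Y\times Y)$, is equivalent to $\si_{\cs}+\si_{\cs}\ast\si_{\cs}$ (the summands coming from $L^2_0(Y)\ot 1$, $1\ot L^2_0(Y)$, and $L^2_0(Y)\ot L^2_0(Y)$, the last of which carries $\si_{\cs}\ast\si_{\cs}$ as its maximal spectral type). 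Hence relative weak mixing of $\tfs$ over $T$ is equivalent to
$$
\si_{\cs}(\lf)+(\si_{\cs}\ast\si_{\cs})(\lf)=0.
$$

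Next I would show this condition is the same as saying $\si_{\cs}(\chi\lf)=0$ for every $\chi\in\widehat{G}$. Since $\tfs$ is ergodic we already have $\si_{\cs}(\lf)=0$ from Proposition \ref{p1}, so the first summand vanishes automatically and only the convolution term matters. By the definition of convolution, $(\si_{\cs}\ast\si_{\cs})(\lf)=\int_{\widehat G}\si_{\cs}(\lf\,\chi^{-1})\,d\si_{\cs}(\chi)$, and since $\lf$ is a subgroup, $\lf\,\chi^{-1}=\chi^{-1}\lf$; thus this integral is zero iff $\si_{\cs}(\chi^{-1}\lf)=0$ for $\si_{\cs}$-a.e.\ $\chi$. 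Combined with $\si_{\cs}(\lf)=0$ one concludes (as in \cite{Le-Le}) that the condition holds iff $\si_{\cs}(\chi\lf)=0$ for \emph{all} $\chi\in\widehat{G}$.

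Finally I would invoke Lemma \ref{l2} to translate the coset condition into weak mixing of $\cs$. Because $\lf$ is saturated (Theorem \ref{p4}) and $\si_{\cs}(\lf)=0$, Lemma \ref{l2} tells us that if $\si_{\cs}(\chi_0\lf)>0$ for some $\chi_0$, then $\cs$ has an eigenvalue lying in $\chi_0\lf$; conversely, any eigenvalue $\chi$ of $\cs$ gives an eigenfunction whose spectral measure is $\delta_\chi$, so $\si_{\cs}(\{\chi\})>0$ and hence $\si_{\cs}(\chi\lf)>0$. Therefore ``$\si_{\cs}(\chi\lf)=0$ for all $\chi$'' is exactly ``$\cs$ has no eigenvalues'', i.e.\ $\cs$ is weakly mixing (a $G$-action with no point spectrum on $L^2_0$ is weakly mixing, and such an action is automatically ergodic, which is consistent with the ergodicity of $\tfs$). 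Assembling the equivalences yields: $\tfs$ is relatively weakly mixing over $T$ $\iff$ $\tfs$ is ergodic and $\cs$ is weakly mixing.

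The only step requiring genuine care is the identification of the maximal spectral type $\si_{\cs\times\cs}$ with $\si_{\cs}+\si_{\cs}\ast\si_{\cs}$ and the bookkeeping that lets me drop the $\si_{\cs}(\lf)$ term using ergodicity of $\tfs$; everything else is a direct citation of Proposition \ref{p1}, Theorem \ref{p4} and Lemma \ref{l2}. I do not anticipate a serious obstacle, since the hard analytic content (producing an eigenfunction from positive mass on a coset of a saturated group) has already been isolated in Lemma \ref{l2}.
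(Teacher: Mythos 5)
Your proposal is correct and follows essentially the same route as the paper: identify the relative product over $T$ with $T_{\va,\cs\times\cs}$, apply Proposition~\ref{p1} together with $\si_{\cs\times\cs}=\si_\cs+\si_\cs\ast\si_\cs$ to reduce to the condition $\si_{\cs}(\chi\lf)=0$ for all $\chi$, and then use Lemma~\ref{l2} (with Theorem~\ref{p4}) to equate that with $\cs$ having no eigenvalues. The extra bookkeeping you supply (the convolution integral and the converse direction for eigenvalues) is a correct expansion of steps the paper leaves implicit.
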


\subsection{Relative Kronecker factor of \protect$\tfs$ over
\protect$T$} Denote by $\ck(\cs)\subset{\cal C}$ the Kronecker
factor of $\cs$, {\em i.e.}\ the factor generated by the
eigenfunctions of the unitary action of $\cs$. If $g,h$ are
eigenfunctions of $\cs$ (corresponding to $\chi$ and $\chi'$
respectively) then
$$
(g\otimes h)\circ T_{\va,\cs\times\cs}(x,y,y')=
\chi(\va(x))\chi'(\va(x))\cdot (g\otimes h)(y,y').
$$
It follows that ${\cal B}\otimes \ck(\cs)$ is contained in the
relative Kronecker factor of $\tfs$ over $T$ (cf.~(\ref{def1})
and~(\ref{def2}) for $n=1$). In fact, we have the following.

\begin{Prop}\label{rKron1}
Assume that $\tfs$ is ergodic. The relative Kronecker factor of
$\tfs$ over $T$ is equal to $\cb\otimes\ck(\cs)$.
\end{Prop}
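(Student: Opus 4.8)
The plan is to show that the relative Kronecker factor $\ck$ of $\tfs$ over $T$ cannot properly contain $\cb\ot\ck(\cs)$. We already have the inclusion $\cb\ot\ck(\cs)\subset\ck$ from the computation preceding the statement, so the task is the reverse inclusion. The natural strategy is: suppose $F$ is a relative eigenfunction of $\tfs$ over $T$, i.e.\ there is an $\cb$-measurable $c:X\to\T$ and $F\in L^2(X\times Y,\mu\ot\nu)$ with $E(|F|^2|\cb)=1$ and $F\circ\tfs=c\cdot F\circ(T|_\cb)$ in the appropriate sense (the scalar case $n=1$ of (\ref{def1})-(\ref{def2}) — and since we only need to generate $\ck$, it suffices to treat $n=1$ after passing to coordinates of $M$). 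I would then expand $F$ in its $L^2\xbm$-valued form $x\mapsto F(x,\cdot)\in L^2\ycn$ and decompose along the cyclic spaces $G(g)$ of $\cs$, reducing to the case where $F(x,\cdot)\in G(g)$ for $\mu$-a.e.\ $x$, for a single $g\in L^2_0\ycn$ (plus the trivial part $L^2\xbm\ot 1_Y$, which lands in $\cb$).

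**Next I would** use the relative eigenfunction equation to produce a spectral constraint, exactly paralleling the eigenvalue computation just above: the function $F$, being a relative eigenfunction with relative eigenvalue $c$, forces (by disintegrating $\mu\ot_\cb$-type reasoning, or more simply by the $V_{\chi\circ\va}$-operator description) that the spectral measure of $F$ under $\tfs$, restricted to the subspace $L^2\xbm\ot G(g)$, is concentrated on the unit circle in a way that — via Lemma~\ref{zima3}, i.e.\ $\sigma_{\tfs|_{L^2(X,\mu)\ot G(g)}}=\int_{\widehat G}\sigma_{\vhf}\,d\sigma_{g,\cs}(\chi)$ — translates into: for $\sigma_{g,\cs}$-a.e.\ $\chi$, the cocycle $\chi\circ\va$ is a quasi-coboundary, that is $\chi\in\svf$. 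The point of a relative eigenfunction over $T$ is that after quotienting by $T$ the resulting operator has discrete spectrum, which here forces spectral measures to be atomic relative to $T$; combined with (\ref{e3}) this is what gives $\chi\in\svf$ a.e. So $\sigma_{g,\cs}$ is concentrated on $\svf$. But $\svf$ is a saturated subgroup of $\widehat G$ (Remark~\ref{sigmafsat}, invoking Theorem~\ref{p4}), so by Corollary~\ref{c1} the spectral subspace $H_{\svf}$ is $L^2(\ca_0)$ for some $\cs$-factor $\ca_0\subset\cc$; thus $g\in L^2(\ca_0)$, and it remains to see that $\ca_0\subset\ck(\cs)$.

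**For that last point** I would argue that on $\ca_0$ the action $\cs$ has discrete spectrum. Indeed, since $\tfs$ is ergodic, $\sigma_\cs(\lf)=0$ by Proposition~\ref{p1}; and $\svf$ is a countable union of cosets $\chi_0\lf$ (one per value $e^{2\pi i t}$, $t$ ranging over a countable set, by the coset-of-$\lf$ structure of the $A_t$ noted before Proposition~\ref{zima4} — here one needs that only countably many $A_t$ can carry mass of $\sigma_\cs$, which follows because $\sigma_\cs$ is a finite measure and the $A_t$ are disjoint). On each such coset $\chi_0\lf$ with $\sigma_\cs(\chi_0\lf)>0$, Lemma~\ref{l2} applies (as $\sigma_\cs(\lf)=0$ and $\lf$ is saturated) and gives that $H_{\chi_0\lf}$ consists of a single eigenfunction of $\cs$. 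Hence $H_{\svf}$ is spanned by eigenfunctions of $\cs$, i.e.\ $\ca_0\subset\ck(\cs)$, so $g\in L^2(\ck(\cs))$ and therefore $F\in L^2(\cb\ot\ck(\cs))$. Combined with the trivial-part case this yields $\ck\subset\cb\ot\ck(\cs)$, completing the proof.

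**The main obstacle** I expect is the first reduction in the middle paragraph: cleanly extracting from "$F$ is a relative eigenfunction of $\tfs$ over $T$" the statement "$\sigma_{g,\cs}$ is concentrated on $\svf$." One must be careful that a relative eigenfunction need not be an eigenfunction, so the spectral measure of $F$ under $\tfs$ itself is not a Dirac mass; the correct formulation is that the fibered operator $x\mapsto F(x,\cdot)$ intertwines $\cs$-twisted translation with multiplication by the unimodular $\cb$-cocycle $c$, which on each cyclic piece $G(g)$ forces the $V_{\chi\circ\va}$-spectral data to be supported where $\chi\circ\va$ is a quasi-coboundary. Making this rigorous — most transparently by lifting to the Anzai-type skew product and running a relative-ergodicity / Fourier-analytic argument in the spirit of Proposition~\ref{lifttomix} and the eigenvalue discussion above — is the technical heart; once it is in place, the saturation machinery (Corollary~\ref{c1}, Lemma~\ref{l2}) closes everything routinely.
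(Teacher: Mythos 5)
Your overall architecture (decompose along cyclic spaces $G(g)$, use saturation, and invoke Lemma~\ref{l2} to convert positive mass on a coset into an eigenfunction of $\cs$) is close in spirit to the paper's, but the spectral constraint you extract in the middle paragraph is wrong, and the proof breaks there. You claim that if $F$ is a relative eigenfunction of $\tfs$ over $T$ whose component in $L^2\xbm\otimes G(g)$ is non-zero, then $\sigma_{g,\cs}$ is concentrated on $\svf$. This is false: take $g$ an eigenfunction of $\cs$ with eigenvalue $\chi\notin\svf$ and $F=1\otimes g$. Then $F\circ\tfs=\chi(\va(\cdot))\cdot F$, so $F$ is a relative eigenfunction over $T$ --- the relative eigenvalue $c(x)=\chi(\va(x))$ is allowed to be an arbitrary unimodular $\cb$-measurable function, not a constant --- yet $\sigma_{g,\cs}=\delta_\chi$ gives no mass to $\svf$. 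The condition ``$\chi\circ\va$ is a quasi-coboundary'' is the constraint coming from \emph{absolute} eigenfunctions of $\tfs$, where $c$ is forced to equal a constant $e^{2\pi it}$; for relative eigenfunctions no such rigidity exists, and the heuristic ``discrete spectrum after quotienting by $T$'' does not translate into atomicity of $\sigma_{\vhf}$. The paper sidesteps this by not working with relative eigenfunctions at all: it uses the characterization of the relative Kronecker factor via \emph{invariant functions of the relative product} ($\tfs$ joined with itself over $T$, which is isomorphic to $T_{\va,\cs\times\cs}$), Theorem~6.13 of \cite{Fu3}. For such an invariant $F$ not orthogonal to $L^2\xbm\otimes G(g\otimes h)$ one obtains $\sigma_{g,\cs}\ast\sigma_{h,\cs}(\lf)>0$, hence $\sigma_{g,\cs}(\chi\lf)>0$ for some $\chi$, and Lemma~\ref{l2} (applied to $\lf$, not $\svf$) produces the eigenfunction of $\cs$. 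Note that the correct conclusion is that $\sigma_{g,\cs}$ charges cosets of $\lf$ containing eigenvalues of $\cs$ --- a set in general strictly larger than $\svf$ --- which is consistent with the answer being all of $\cb\otimes\ck(\cs)$ rather than merely $\cb\otimes\ca_{\svf}$.

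A second, independent gap is your reduction to scalar ($n=1$) relative eigenfunctions. The relative Kronecker factor is generated by the coordinates of $U(n)$-valued relative eigenfunctions for all $n$, and a coordinate $M_i$ of an $n$-dimensional one satisfies only $M_i\circ\tfs=\sum_j c_{ij}\,M_j$; it is not itself a scalar relative eigenfunction, so ``passing to coordinates'' does not reduce the problem to $n=1$. The relative-product characterization handles all $n$ at once, which is precisely why the paper takes that route. Your final paragraph (only countably many cosets $A_t$ can carry $\sigma_\cs$-mass, and Lemma~\ref{l2} makes each $H_{\chi_0\lf}$ an eigenspace) is correct and is essentially how the paper concludes, but in your write-up it is reached from a false premise.
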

\begin{proof}
Assume that $F\in L^2(X\times Y\times Y,\mu\otimes\nu\otimes\nu)$
is a $T_{\va,\cs\times\cs}$-invariant function. Take $g,h\in
L^2\ycn$ and suppose that $F$ is not orthogonal to $L^2\xbm\otimes
G(g\otimes h)$. Then, proceeding as in the proof of
Proposition~\ref{zima4}, we obtain that $\sigma_{g\otimes
h,\cs\times\cs}(\Lambda_{\va})=
\sigma_{g,\cs}\ast\sigma_{h,\cs}(\Lambda_{\varphi})>0$. Therefore
$\sigma_{g,\cs}(\chi\Lambda_\varphi)>0$ for some
$\chi\in\widehat{G}\setminus\{1\}$ (and the same holds for $h$).
By Lemma~\ref{l2}, remembering that $\tfs$ is ergodic, $g$ is not
orthogonal to an eigenfunction of $\cs$ from $H_{\chi\Lambda}$. It
follows that if $\{g_i\}_{i\geq0}$ stands for an orthonormal base
of $L^2(\ck(\cs))$, where each $g_i$ is an eigenfunction
corresponding to $\chi_i$, $i\geq0$, then
$$
F(x,y,y')=\sum_{i,j\geq0}a_{ij}(x)g_i(y)g_j(y'),
$$
where $\chi_i\cdot\chi_j\in\Lambda_\varphi$, whenever
$a_{ij}\neq0$ (in fact $\chi_j=\ov{\chi_i}$ since there is at most
one eigenvalue in a coset $\chi\lf$). Fix any function
$J=J(x,y')\in L^2(X\times Y,\mu\otimes\nu)$. Then, for each
$i,j\geq0$ the function given by
$$
\left((a_{ij}\otimes g_i\otimes g_j)\ast J\right)(x,y):= \int_Y
a_{ij}(x)g_i(y)g_j(y')J(x,y')\,d\nu(y')$$ is of the form
$A(x)g_i(y)$, so it is measurable with respect to $\cb\otimes
\ck(\cs)$. The result follows then directly from the description
of the relative Kronecker factor given in \cite{Fu3},
Theorem~6.13.
\end{proof}

\begin{Remark}\em Proposition~\ref{rKron1} yields another proof of
the result about eigenfunctions of $\tfs$ when $\tfs$ is ergodic.
Indeed, eigenfunctions are measurable with respect to the relative
Kronecker factor. If, as before, $\{g_i\}$ stands for an
orthonormal base of eigenfunctions in $L^2(\ck(\cs))$ and $F\circ
\tfs=c\cdot F$, then
$$
F(x,y)=\sum_{i\geq0}a_i(x)g_i(y),\;F\circ\tfs(x,y)=\sum_{i\geq0}a_i(Tx)
\chi_i(\va(x))g_i(y),$$ so $c\cdot
a_i(x)=a_i(Tx)\cdot\chi_i(\va(x))$ for each $i\geq0$.
\end{Remark}

\subsection{Regularity of Rokhlin cocycles}\label{tensorowa}
When the group $\lf$ of a cocycle $\va:X\to G$ is not trivial
then, obviously, the skew product $T_\va:(X\times G,\mu\ot
\la_G)\to (X\times G,\mu\ot\la_G)$,
$T_\varphi(x,g)=(Tx,\va(x)\cdot g)$, is not ergodic, but $\va$
need not be a coboundary. We will show however in this section
that on the level of Rokhlin cocycles, that is when considering
the cocycle $x\mapsto S_{\va(x)}$, it must be a coboundary as soon
as $\sigma_{\cs}$ is concentrated on $\lf$\footnote{When $\lf$ is
uncountable then there is always a weakly mixing Gaussian action
$\cs$ such that $\sigma_{\cs}$ is concentrated on $\lf$.}. In the
general case, we denote by $\ca_{\lf}$ be the $\cs$-factor
corresponding to $\lf$ according to Corollary~\ref{c1}, {\em i.e.}
$L^2(\ca_{\lf})=H_{\lf}$, and we will show that $x\mapsto
S_{\va(x)}|_{\ca_{\lf}}$ is a coboundary.

We show firstly that, when $T$ is ergodic, $\cb\otimes\ca_{\lf}$
contains the factor of $\tfs$-invariant sets.

\begin{Lemma}\label{tens3}
Assume that $T$ is ergodic. Every $\tfs$-invariant function $F$ in
$L^2(X\times Y,\cb\otimes\cc, \mu\otimes\nu)$ is
$\cb\otimes\ca_{\lf}$-measurable.
\end{Lemma}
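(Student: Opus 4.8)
Let $F\in L^2(X\times Y,\cb\ot\cc,\mu\ot\nu)$ be $\tfs$-invariant. Decompose $F$ into its cyclic components with respect to the $\cs$-action on the second coordinate: writing $L^2(X\times Y)$ as $L^2(X,\mu;L^2\ycn)$, and choosing an orthonormal basis $\{g_m\}_{m\ge 0}$ of $L^2\ycn$ with $g_0=1$, one reduces to analysing the projection of $F$ onto subspaces $L^2\xbm\ot G(g)$ for $g\in L^2_0\ycn$, plus the part $L^2\xbm\ot 1_Y$. On the latter piece $F$ is a function of $x$ alone, hence (by ergodicity of $T$) constant, so trivially $\cb\ot\ca_{\lf}$-measurable. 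So the whole point is to show that the projection of $F$ onto $L^2\xbm\ot G(g)$ vanishes whenever $\si_{g,\cs}$ gives zero mass to $\lf$; then $F$ lives in $\ov{\rm span}\{L^2\xbm\ot G(g):\si_{g,\cs}(\lf)=\si_{g,\cs}(\widehat G)\}=L^2\xbm\ot H_{\lf}=L^2(\cb\ot\ca_{\lf})$, using that $H_{\lf}=L^2(\ca_{\lf})$ by Corollary~\ref{c1} and that $H_{\lf}$ is a spectral subspace.

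First I would observe that $L^2\xbm\ot G(g)$ is $\tfs$-invariant (already noted in the excerpt), so we may assume $F$ itself lies in this subspace. The maximal spectral type of $\tfs$ restricted there is $\int_{\widehat G}\si_{\vhf}\,d\si_{g,\cs}(\chi)$ by Lemma~\ref{zima3}. An invariant $F\neq 0$ would force this measure to charge $\{1\}$, i.e. $\si_{\vhf}(\{1\})>0$ for a positive-$\si_{g,\cs}$ set of $\chi$; by~(\ref{e2}) this set is exactly $\lf$, so $\si_{g,\cs}(\lf)>0$. Contrapositively: if $\si_{g,\cs}(\lf)=0$ then $\tfs$ has no invariant functions in $L^2\xbm\ot G(g)$. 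In fact one should do this a little more carefully, projecting onto the part of $G(g)$ whose spectral measure avoids $\lf$, since $\si_{g,\cs}$ may have an atomic part on $\lf$ and a part off it — but decomposing $\si_{g,\cs}=\si_{g,\cs}|_{\lf}+\si_{g,\cs}|_{\widehat G\setminus\lf}$ corresponds to an orthogonal $\tfs$-invariant splitting $G(g)=(G(g)\cap H_{\lf})\oplus(G(g)\cap H_{\widehat G\setminus\lf})$, and on the second summand Lemma~\ref{zima3} gives maximal spectral type concentrated off $\{1\}$, hence no invariant vectors.

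The main obstacle is the bookkeeping to pass from "no invariant vectors in each $L^2\xbm\ot(G(g)\cap H_{\widehat G\setminus\lf})$" to "$F$ is $\cb\ot\ca_{\lf}$-measurable" globally: one must check that these cyclic subspaces, as $g$ ranges over $L^2_0\ycn$, span $L^2\xbm\ot(L^2\ycn\ominus H_{\lf})$, which is where Corollary~\ref{c1}'s identification $H_{\lf}=L^2(\ca_{\lf})$ enters decisively — $\lf$ is saturated by Theorem~\ref{p4}, so $\ca_{\lf}$ is a genuine $\cs$-factor and $H_{\lf}^\perp$ is again a spectral subspace (it is $H_{\widehat G\setminus\lf}$). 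Concretely: project $F$ onto $L^2\xbm\ot H_{\lf}^\perp$; this projection is still $\tfs$-invariant (since $H_{\lf}$ and $H_{\lf}^\perp$ are $\cs$-invariant, the decomposition $L^2(X\times Y)=L^2(\cb\ot\ca_{\lf})\oplus(L^2\xbm\ot H_{\lf}^\perp)$ is $\tfs$-invariant); it lies in the closed span of the subspaces $L^2\xbm\ot G(g)$ with $\si_{g,\cs}(\lf)=0$; and on each such subspace the previous paragraph shows an invariant function must be $0$. Hence the projection is $0$ and $F\in L^2(\cb\ot\ca_{\lf})$, which is the claim.
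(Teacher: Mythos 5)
Your proposal is correct and follows essentially the same route as the paper's proof: project the invariant function onto the $\tfs$-invariant subspaces $L^2\xbm\otimes G(g)$, use Lemma~\ref{zima3} together with~(\ref{e2}) to see that a non-zero invariant projection forces $\si_{g,\cs}(\lf)>0$, and conclude $F\in L^2\xbm\otimes H_{\lf}=L^2(\cb\otimes\ca_{\lf})$. The extra bookkeeping you supply (that $H_{\lf}^{\perp}=H_{\widehat G\setminus\lf}$ is a spectral subspace spanned by cyclic spaces $G(g)$ with $\si_{g,\cs}(\lf)=0$) is exactly what the paper's closing phrase ``since $g$ was arbitrary'' silently relies on.
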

\begin{proof}
Given any $g\in L^2_0\ycn$, the projection of $F$ on the
$\tfs$-invariant subspace  $L^2\xbm\otimes G(g)$ is still
$\tfs$-invariant. If this projection is non-zero, the maximal
spectral type of $\tfs$ on $L^2\xbm\otimes G(g)$ must have an atom
at~1. Then, by Lemma~\ref{zima3},
$\si_{g,\cs}(\{\chi\in\widehat{G}:\:\sigma_{\vhf}(\{1\})>0\}>0$
and so, since $T$ is ergodic, $\sigma_{g,\cs}(\lf)>0$ in view of
(\ref{e2}). Since $g$ was arbitrary, it follows $F\in
L^2\xbm\otimes H_{\lf}= L^2(X\times Y, \cb\otimes\ca_{\lf},
\mu\otimes\nu)$.
\end{proof}

We give now a short description of the action of $\tfs$ on
$L^2\xbm\otimes G(g)$ for a non-zero $g\in L^2\xbm$, which will
shed some light on the proof of Proposition~\ref{tens6} below. We
identify naturally $L^2\xbm\otimes G(g)$ to $L^2(X, \cb,\mu;\;
G(g))$. We may furthermore replace $G(g)$ by $L^2(\widehat{G},
\cb(\widehat{G}), \si_{g,\cs})$ through the canonical spectral
isomorphism, which we denote by $I$. We shall determine the
unitary operator $V_\va$ on $L^2(X, \cb, \mu;\;
L^2(\widehat{G},\cb(\widehat{G}),\sigma_{g,\cs}))
=L^2(X\times\widehat{G},\cb\otimes\cb(\widehat{G}),
\mu\otimes\sigma_{g,\cs})$ which corresponds to $\tfs$ acting on
$L^2\xbm\otimes G(g)$.

Let $\widetilde{F}\in
L^2(X\times\widehat{G},\cb\otimes\cb(\widehat{G}),
\mu\otimes\sigma_{g,\cs})$ correspond to $F\in L^2\xbm\otimes
G(g)$ - {\em i.e.\ }$\widetilde{F}(x,\cdot)=I(F(x,\cdot))$ for
$\mu$-a.e.\ $x$. We have, for $\mu$-a.e.\ $x$,
$$
(V_\varphi\widetilde{F})(x,\cdot)=I(F(Tx,S_{\va(x)}(\cdot))).
$$
Now, if $h\in G$, the image of $g\circ S_h$ under $I$ is $\tilde
h$, where $\tilde h(\chi)=\chi(h)$ for $\chi\in \widehat{G}$. If
we take $F$ of the form $F=f\otimes g\circ S_h$ then
$\widetilde{F}=f\otimes\widetilde{h}$ and
$$
V_\varphi(f\otimes\widetilde{h})(x,\cdot) =f(Tx)\cdot I(g\circ
S_{\va(x)\cdot h}) =\chi(\va(x))\cdot
(f\otimes\tilde{h})(Tx,\cdot).
$$
It follows that for each $\widetilde{F}\in
L^2(X\times\widehat{G},\mu\otimes\sigma_{g,\cs})$,
$\mu\otimes\sigma_{g,\cs}$-a.e.\ $(x,\chi)\in X\times\widehat{G}$,
\beq\label{tens2} (V_\varphi
\widetilde{F})(x,\chi)=\chi(\va(x))\widetilde{F}(Tx,\chi). \eeq

We come to the result announced at the beginning of this section.

\begin{Prop}\label{tens6}
The Rokhlin cocycle $x\mapsto S_{\va(x)}|_{\ca_{\lf}}$ is a
coboundary. In other words $\tfs|_{\cb\otimes\ca_{\lf}}$ is
relatively isomorphic to $T\times id_Y|_{\cb\otimes\ca_{\lf}}$.
\end{Prop}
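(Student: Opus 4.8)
The plan is to prove that the Rokhlin cocycle $x \mapsto S_{\va(x)}|_{\ca_{\lf}}$ is a coboundary by constructing an explicit transfer function. By Corollary~\ref{c1} we have $L^2(\ca_{\lf}) = H_{\lf}$, the spectral subspace of $\cs$ corresponding to the saturated group $\lf$. The strategy is to work cyclic space by cyclic space: for each $g \in L^2_0\ycn$ whose spectral measure $\si_{g,\cs}$ is concentrated on $\lf$, we use the description~(\ref{tens2}) of the action of $V_\va$ on $L^2(X\times\widehat{G},\mu\otimes\si_{g,\cs})$, namely $(V_\va\widetilde F)(x,\chi) = \chi(\va(x))\widetilde F(Tx,\chi)$. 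The point is that for $\si_{g,\cs}$-a.e.\ $\chi$ we have $\chi \in \lf$, so by definition of $\lf$ there is a measurable $\xi_\chi : X \to \tor$ with $\chi\circ\va = \xi_\chi/\xi_\chi\circ T$. Rewriting, $\chi(\va(x)) = \xi_\chi(x)\,\ov{\xi_\chi(Tx)}$, and then $\widetilde F \mapsto (x,\chi) \mapsto \ov{\xi_\chi(x)}\widetilde F(x,\chi)$ conjugates $V_\va$ on this space to the operator $(x,\chi)\mapsto \widetilde F(Tx,\chi)$, i.e.\ to $T \times \mathrm{id}$. This is exactly the assertion that $x\mapsto S_{\va(x)}$ restricted to $G(g)$ is a coboundary.

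The main obstacle, and the heart of the proof, is measurability: one must choose the transfer functions $\xi_\chi$ so that $(x,\chi)\mapsto \xi_\chi(x)$ is jointly measurable on $X\times\widehat{G}$ (with respect to $\mu\otimes\si_{g,\cs}$, or better yet $\mu\otimes\si_{\cs}$ so the choice is coherent across all cyclic spaces). I would handle this by a selection/disintegration argument: consider the Anzai-type skew product $T_{\va}$ on $X\times\widehat{G}$ twisted by the character pairing — more precisely, consider on $L^2(X\times\widehat{G},\mu\otimes\si_{\cs})$ the operator $W$ defined by $(W\widetilde F)(x,\chi) = \chi(\va(x))\widetilde F(Tx,\chi)$, which fibers over $\widehat{G}$ into the operators $\vhf$ on $L^2\xbm$. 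Since $\chi\in\lf$ for $\si_{\cs}$-a.e.\ $\chi$, by~(\ref{e2}) we have $\si_{\vhf}(\{1\})>0$ for a.e.\ $\chi$, so each fiber operator $\vhf$ has $1$ as an eigenvalue; the corresponding (modulus-one, normalized) eigenfunction is the transfer function $\xi_\chi$. The existence of a measurable field $\chi\mapsto\xi_\chi$ of such eigenfunctions follows from a measurable selection theorem (von Neumann selection, or the measurable disintegration of the fibered unitary operator $W$ — essentially the argument that produces measurable eigenfunctions in fibered/ergodic-decomposition settings).

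Concretely, I would proceed in the following steps. First, set up the identification $L^2(X\times Y,\cb\otimes\ca_{\lf},\mu\otimes\nu)$ with $L^2(X,\cb,\mu;\,H_{\lf})$ and, via the spectral machinery of Section~1.2 applied cyclic-space-by-cyclic-space, reduce to showing the existence of a unitary $U$ on $L^2(X\times\widehat{G},\mu\otimes\si_{\cs})$ of the form $(U\widetilde F)(x,\chi) = u(x,\chi)\widetilde F(x,\chi)$ with $u$ measurable, $|u|=1$, intertwining $W$ with $T\times\mathrm{id}$; that intertwining relation is precisely $u(x,\chi)\ov{u(Tx,\chi)} = \chi(\va(x))$, i.e.\ $\chi\circ\va = u(\cdot,\chi)/u(\cdot,\chi)\circ T$ for a.e.\ $\chi$. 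Second, invoke measurable selection to produce such $u$ from the fiberwise eigenfunctions guaranteed by $\chi\in\lf$. Third, translate back: the multiplication operator by $u$, read through the spectral isomorphisms $I$ on each $G(g)$, is the $L^2$-realization of a measurable map $x\mapsto R_x \in \mathrm{Aut}(\ca_{\lf})$ with $R_{Tx}^{-1}\circ S_{\va(x)}\circ R_x = \mathrm{id}$ on $\ca_{\lf}$ — wait, one should be careful here: $u$ gives an intertwining of unitary operators, and since $S_{\va(x)}|_{\ca_{\lf}}$ and the $R_x$ are Markov (indeed, come from the $G$-action $\cs$ and its spectral structure), the intertwining unitary is itself induced by a measure-preserving point map, so $x\mapsto R_x$ is the desired transfer cocycle in $\mathrm{Aut}(\ca_{\lf})$, giving the stated relative isomorphism with $T\times\mathrm{id}_Y$ on $\cb\otimes\ca_{\lf}$. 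The only genuinely delicate point is the joint measurability in the selection step; everything else is bookkeeping with the spectral isomorphisms already set up before~(\ref{tens2}).
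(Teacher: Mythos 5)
Your overall strategy coincides with the paper's up to the last step: choose, measurably in $\chi$, a transfer function for each $\chi\in\lf$ (the paper simply cites Host--M\'ela--Parreau for the existence of a measurable selector of transfer functions defined $\si_{\cs}|_{\lf}$-a.e., which is exactly what your von Neumann selection argument would reprove), and use the resulting unit-modulus function $u(x,\chi)$ to define, for each $x$, a unitary operator $W_x$ on $L^2(\ca_{\lf})$ acting by multiplication by $u(x,\cdot)$ in the spectral representation, so that $S_{\va(x)}|_{L^2(\ca_{\lf})}=W_xW_{Tx}^{-1}$ for $\mu$-a.e.\ $x$. Up to that point the proposal is sound, and the measurability issue you flag as ``the heart of the proof'' is indeed handled correctly (if by citation) in the paper.

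The genuine gap is in your final translation step. You assert that the multiplication operators $R_x$ are Markov because they ``come from the $G$-action $\cs$ and its spectral structure,'' and conclude that they are induced by measure-preserving point maps. This is false: multiplication by an arbitrary unit-modulus measurable function of the maximal spectral type is a unitary operator commuting with $\cs$, but it is in general not a Markov operator and need not be implemented by any automorphism of $(Y|_{\ca_{\lf}},\ca_{\lf},\nu|_{\ca_{\lf}})$ (already for a single unitary with simple spectrum the commutant consists of all such multiplications, only very few of which come from point maps). So what you have actually proved is that $x\mapsto S_{\va(x)}|_{L^2(\ca_{\lf})}$ is a coboundary as a cocycle with values in the Polish group ${\cal U}(L^2(\ca_{\lf}))$ --- not in ${\rm Aut}(\ca_{\lf})$, which is what the Proposition asserts and what the relative isomorphism with $T\times{\rm id}_Y$ requires. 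The missing ingredient, and the way the paper concludes, is the standard cocycle-reduction fact: ${\rm Aut}(\ca_{\lf})$ is a closed subgroup of the Polish group ${\cal U}(L^2(\ca_{\lf}))$, and for ergodic $T$ a cocycle taking values in a closed subgroup $H$ of a Polish group which is a coboundary in the ambient group is already a coboundary in $H$ (the right $H$-coset of the transfer function is $T$-invariant, hence a.e.\ constant, and correcting the transfer function by that constant puts it into $H$). The resulting ${\rm Aut}(\ca_{\lf})$-valued transfer function is in general different from your $W_x$; without this reduction the proof is incomplete.
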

\begin{proof}
Let $\si_\cs|_{\lf}$ be the spectral type of
$\cs|_{L^2(\ca_{\lf})}$ ({\em i.e.\ }$\si_\cs|_{\lf}$ is $\si_\cs$
restricted to $\lf$). When $\chi\in\lf$, the cocycle
$\chi\circ\va$ is a coboundary, that is $\chi\circ\va=f/f\circ T$
$\mu$-a.e.\ for some measurable $f$ of modulus 1. In fact there
exists a measurable selector of transfer functions defined
$\si_\cs|_{\lf}$-a.e.\ (see e.g.\ \cite{Ho-Me-Pa}). This means
that there exists a measurable function $\widetilde F$ of modulus
1 on $X\times \lf$ such that \beq\label{tens5}
\chi(\va(x))=\widetilde{F}(x,\chi)/\widetilde{F}(Tx,\chi)
\;\;\mbox{for}\;\mu\otimes\si_\cs|_{\lf}\mbox{-a.a.}\; (x,\chi)
\eeq (so, for every $g\in L^2 (\ca_{\lf})$, the function $F\in
L^2\xbm\otimes G(g)$ corresponding to $\widetilde{F}$ is
$\tfs$-invariant).

Given $x\in X$, for every $g\in L^2 (\ca_{\lf})$ the action of
$S_{\va(x)}$ on $G(g)$ corresponds through the spectral
isomorphism to the multiplication by the function
$\chi\mapsto\chi(\va(x))$. On the other hand, by the canonical
action of $L^{\infty}(\widehat{G},
\cb(\widehat{G}),\si_{\cs|_{\lf}})$ on $L^2(\ca_{\lf})$, there
also exists a unitary operator $W_x$ on $L^2 (\ca_{\lf})$ whose
restriction to each $G(g)$, for $g\in L^2 (\ca_{\lf})$,
corresponds to the multiplication by the unit-modulus function
$\widetilde{F}(x,\cdot)$. Then the equality~(\ref{tens5}) yields
$$
S_{\va(x)}|_{L^2(\ca_{\lf})}=W_x
W_{Tx}^{-1}\;\;\mbox{for}\;\mu\mbox{-a.a.}\; x.
$$
So, the cocycle $x\mapsto S_{\va(x)}|_{L^2(\ca_{\lf})}$ is a
$T$-coboundary as a cocycle taking values in ${\cal U}(L^2
(\ca_{\lf}))$.

Finally, ${\rm Aut}(\ca_{\lf})$ is naturally identified to a
closed subgroup of the Polish group ${\cal U}(L^2 (\ca_{\lf}))$.
Since $x\mapsto S_{\va(x)}|_{L^2(\ca_{\lf})}$ takes its values in
${\rm Aut}(\ca_{\lf})$, it is still a coboundary as an ${\rm
Aut}(\ca_{\lf})$-valued cocycle.
\end{proof}

In view of Proposition~\ref{p1} we obtain the following result.

\begin{Cor}\label{cor111} If $T$ is ergodic and $\tfs$ is not ergodic then
there is a non-trivial factor $\ca$ of $\cs$ such that
$\tfs|_{\cb\otimes\ca}$ is relatively isomorphic to $T\times
id_Y|_{\cb\otimes\ca}$.\bez\end{Cor}

\subsection{Lifting mild mixing property}
In this section we will show that the triviality of the Rokhlin
cocycle described in Proposition~\ref{tens6} also takes place when
dealing with the mild mixing property, and we give necessary and
sufficient conditions in order that the mild mixing property lift
from $T$ to $\tfs$. Recall that $T$ is mildly mixing if $T$ has no
non-trivial rigid factors and that a factor $\ca$ of $T$ is rigid
if and only if the spectral type of $T|_\ca$ is a Dirichlet
measure.

We will need the following.

\begin{Lemma}\label{lmm1} Assume that  $T$ is mildly mixing.
If $\xi:X\to\T$ is a cocycle and $T_\xi$ has a non-trivial rigid
factor $\ca\subset \cb\ot\cb(\T)$ then there exist a factor ${\cal
A}'$ of $T_\xi$ containing $\ca$ and a mixing sequence $(q_n)$ for
$T$ such that $(T_\xi)^{q_n}\to E(\cdot|\ca')$.
\end{Lemma}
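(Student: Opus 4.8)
The statement to prove is Lemma~\ref{lmm1}: assuming $T$ mildly mixing, if the Anzai skew product $T_\xi$ has a non-trivial rigid factor $\ca\subset\cb\ot\cb(\T)$, then there is a factor $\ca'$ of $T_\xi$ containing $\ca$ and a \emph{mixing} sequence $(q_n)$ for $T$ with $(T_\xi)^{q_n}\to E(\cdot|\ca')$. The plan is to first invoke Proposition~\ref{idemp1}: a non-trivial rigid factor $\ca$ of $T_\xi$ gives a possibly larger factor $\ca'\supset\ca$ together with a rigidity sequence $(q_n)$ for $T_\xi|_{\ca'}$ such that $(T_\xi)^{q_n}\to E(\cdot|\ca')$ in $\ca{\cal J}(T_\xi)$. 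The whole point is then to show that this $(q_n)$ can be taken to be a mixing sequence for $T$ itself. For that I would pass to the base: the projection $X\times\T\to X$ intertwines $T_\xi$ with $T$, so any limit point $\Psi$ of $(T^{q_n})$ in ${\cal J}(T)$ is the image of $E(\cdot|\ca')$ under this factor map, hence an idempotent-like object fixing $L^2$ of the factor of $T$ underneath $\ca'$. Since $T$ is mildly mixing, that factor of $T$ must be trivial — otherwise it would be a non-trivial rigid factor of $T$, contradicting mild mixing (rigid factors are exactly those with Dirichlet spectral type, and a limit of $T^{q_n}$ along the rigidity sequence that is not the identity on a non-trivial factor would still force rigidity of that factor). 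Therefore $T^{q_n}\to\Phi_{\mu\ot\mu}$ on the base, i.e.\ $(q_n)$ is a mixing sequence for $T$.

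More carefully, I would argue as follows. Let $\pi:(X\times\T,\mu\ot\la)\to(X,\mu)$ be the coordinate projection, a factor map from $T_\xi$ onto $T$. The factor of $T_\xi$ generated by $\pi$ is $\cb\ot\{\emptyset,\T\}$. The intersection $\ca'\cap(\cb\ot\{\emptyset,\T\})$ is a $T_\xi$-invariant sub-$\sigma$-algebra that is measurable with respect to $\pi$, so it corresponds to a $T$-invariant factor $\cb_0\subset\cb$. Passing $(T_\xi)^{q_n}\to E(\cdot|\ca')$ through $\pi$ (weak operator continuity of $\Phi\mapsto$ its compression to $L^2(\cb\ot\{\emptyset,\T\})$), we get that $(T^{q_n})$ converges in ${\cal J}(T)$, and its limit fixes every function in $L^2(\cb_0)$; in particular $T^{q_n}\to{\rm Id}$ strongly on $L^2(\cb_0)$, so $\cb_0$ is a rigid factor of $T$ with rigidity sequence $(q_n)$. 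Mild mixing of $T$ forces $\cb_0$ to be trivial. Now the limit $\Psi$ of $(T^{q_n})$ in ${\cal J}(T)$ is a Markov operator commuting with $T$ that fixes $L^2(\cb_0)=\C\cdot1$ only; but in fact I can say more — since $\ca'\supset\ca$ and $E(\cdot|\ca')$ fixes all of $L^2(\ca')$, the compression of $E(\cdot|\ca')$ to the base is exactly $E(\cdot|\cb_0)$ by the way relative products project, hence $\Psi=E(\cdot|\cb_0)=\Phi_{\mu\ot\mu}$. Thus $(q_n)$ is a mixing sequence for $T$.

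The one subtlety to handle with care — and the step I expect to be the main obstacle — is the claim that the image under $\pi$ of $E(\cdot|\ca')\in{\cal J}(T_\xi)$ is precisely $E(\cdot|\cb_0)\in{\cal J}(T)$, and not merely some Markov operator dominated by it. This is where I would be most careful: I need the disintegration of $\mu\ot\la$ over $\ca'$ to project correctly, i.e.\ that the relative product $(\mu\ot\la)\ot_{\ca'}(\mu\ot\la)$ pushed forward by $\pi\times\pi$ equals $\mu\ot_{\cb_0}\mu$. Equivalently, using (\ref{mar6}), I must check that $\Psi\circ E(\cdot|\cb_0)=E(\cdot|\cb_0)$, i.e.\ that $\Psi$ fixes $L^2(\cb_0)$, which I already have, \emph{and} that $\Psi$ is itself an idempotent; but $\Psi$ is a limit of powers $T^{q_n}$ along a sequence that is rigid on $\cb_0$ and, once we know $\cb_0$ is trivial, $\Psi$ restricted to $L^2_0(X)$ is a limit of unitaries, so it need not a priori be $\Phi_{\mu\ot\mu}$ — I must instead get this directly from the fact that it is the $\pi$-image of the \emph{orthogonal projection} $E(\cdot|\ca')$, which is a contraction of norm governed by how much of $L^2_0(X)$ sits inside $L^2(\ca')$; and $L^2(\ca')\cap L^2(\cb\ot\{\emptyset,\T\}) = L^2(\cb_0) = \C\cdot1$ means the $\pi$-image of $E(\cdot|\ca')$ kills $L^2_0(X)$ entirely. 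I would spell this last computation out explicitly using Lemma~\ref{mar8} applied to $\eta = \eta_{E(\cdot|\ca')}$ pushed through $\pi\times\pi$: a function $f$ of $x$ alone with $\|\Psi f\|=\|f\|$ would, lifted to $X\times\T$, be fixed by $E(\cdot|\ca')$ hence $\ca'$-measurable, hence $\cb_0$-measurable, hence constant. That closes the argument and delivers $\Psi=\Phi_{\mu\ot\mu}$, so $(q_n)$ is mixing for $T$, completing the proof.
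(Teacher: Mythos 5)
Your first step is the paper's: Proposition~\ref{idemp1} produces $\ca'\supset\ca$ and a rigidity sequence $(q_n)$ for $T_\xi|_{\ca'}$ with $(T_\xi)^{q_n}\to E(\cdot|\ca')$, and the whole issue is to show that this $(q_n)$ is mixing for $T$. But your way of finishing has a genuine gap. Writing $\Psi$ for the limit of $T^{q_n}$ in ${\cal J}(T)$, i.e.\ the compression of $E(\cdot|\ca')$ to $L^2(\cb\ot\{\emptyset,\T\})$, what you need is $\Psi f=0$ for every $f\in L^2_0\xbm$; since $\langle \Psi f,f\rangle=\|E(f\ot 1|\ca')\|^2$, this is exactly the orthogonality $L^2_0(\cb\ot\{\emptyset,\T\})\perp L^2(\ca')$. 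What your argument actually establishes is only that $L^2(\ca')\cap L^2(\cb\ot\{\emptyset,\T\})=\C\cdot 1$ (triviality of your $\cb_0$, which does follow from mild mixing since $\cb_0$ is rigid along $(q_n)$), together with its consequence via Lemma~\ref{mar8} that $\|\Psi f\|<\|f\|$ for every non-constant $f$. That is strictly weaker: two $T_\xi$-invariant closed subspaces can meet only in the constants without being orthogonal (compare the first-coordinate factor and the diagonal factor in a product of a weakly mixing system with itself), and for a Markov operator not known to be idempotent, triviality of $\cb_1(\eta_\Psi)$ does not force $\eta_\Psi=\mu\ot\mu$. You flag the idempotency problem yourself, but the substitute you offer --- that the $\pi$-image of $E(\cdot|\ca')$ kills $L^2_0(X)$ \emph{because} the intersection is trivial --- is precisely the non sequitur.

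The paper closes this exact gap spectrally, in one line: since $(q_n)$ is a rigidity sequence for $T_\xi|_{\ca'}$, the maximal spectral type of $T_\xi$ on $L^2(\ca')$ is a Dirichlet measure, while mild mixing of $T$ means no non-zero $f\in L^2_0\xbm$ has a spectral measure carrying a non-trivial component absolutely continuous with respect to a Dirichlet measure (the spectral measure of $E(f\ot1|\ca')$ is $\ll\sigma_{f,T}$ and $\ll$ the Dirichlet type, so if it were non-zero one could produce a non-zero rigid element of $L^2_0\xbm$). Hence $L^2_0(\cb\ot\{\emptyset,\T\})\perp L^2(\ca')$, and restricting the convergence $(T_\xi)^{q_n}\to E(\cdot|\ca')$ to functions of $x$ alone gives $T^{q_n}\to\Phi_{\mu\ot\mu}$ on $L^2_0\xbm$. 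Inserting this orthogonality step makes your write-up correct and essentially identical to the paper's proof.
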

\begin{proof}
From  Proposition~\ref{idemp1} there exist a factor $\ca'$ of
$T_\xi$ containing $\ca$ and a rigid sequence $(q_n)$ for
$T_\xi|_{\ca'}$ such that \beq\label{lmm2} (T_\xi)^{q_n}\to
E(\cdot|\ca'). \eeq It remains to show that $(q_n)$ is a mixing
sequence for $T$. But, since we assume that $T$ is mildly mixing,
no spectral measure of a function in $L^2_0\xbm$ is a Dirichlet
measure, while the maximal spectral type of $T_\xi$ on
$L^2({\ca'})$ is a Dirichlet measure. Thus
$L^2_0({\cb}\otimes\{\emptyset,\T\})\perp L^2({\ca'})$ and the
result follows from~(\ref{lmm2}).
\end{proof}

\begin{Prop}\label{lmm3}
Assume that $T$ is mildly mixing. If $\sigma_{\cs}(\svf)=0$, then
$\tfs$ is also mildly mixing.
\end{Prop}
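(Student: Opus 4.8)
The goal is to show that if $T$ is mildly mixing and $\sigma_{\cs}(\svf)=0$, then $\tfs$ has no non-trivial rigid factors. Suppose, for contradiction, that $\tfs$ has a non-trivial rigid factor. By Proposition~\ref{idemp1} applied to $\tfs$, there exist a factor $\ca'$ of $\tfs$ (containing that rigid factor, hence non-trivial) and a rigidity sequence $(q_n)$ for $\tfs|_{\ca'}$ such that $(\tfs)^{q_n}\to E(\cdot\,|\,\ca')$ in ${\cal J}(\tfs)$. The idea is to show that such a sequence $(q_n)$ cannot exist: I will test convergence against functions of the form $f\otimes g$ with $f\in L^2\xbm$ and $g$ lying in a cyclic subspace $G(g)$ of $L^2_0\ycn$, and derive, via the spectral description from Lemma~\ref{zima3} together with the weighted-operator machinery of Section~\ref{weightedops}, that the limit projection $E(\cdot\,|\,\ca')$ would have to kill everything orthogonal to $L^2\xbm\otimes 1_Y$, forcing $\ca'$ to sit inside $\cb\otimes\{\emptyset,Y\}$ — and then mild mixing of $T$ itself finishes the job.

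Here is the mechanism in more detail. First, by passing to a subsequence I may assume $T^{q_n}\to\Phi$ in ${\cal J}(T)$ for some $\Phi=\Phi_\rho$; since $T$ is mildly mixing (in particular has no non-trivial rigid factor), I claim $(q_n)$ must actually be a mixing sequence for $T$, i.e.\ $\Phi=\Phi_{\mu\ot\mu}$ — this is because the restriction of $\tfs|_{\ca'}$ to $L^2(\cb\otimes\{\emptyset,Y\}\cap\ca')$, which is a factor of $T$, is rigid along $(q_n)$, and mild mixing of $T$ forbids any non-trivial such factor; what remains of $\ca'$ on the $\cb$-side is trivial, so the $\cb$-marginal behaviour of the $q_n$-th powers is asymptotically mixing. (Alternatively one argues as in the proof of Lemma~\ref{lmm1}: the maximal spectral type of $\tfs|_{\ca'}$ is Dirichlet, while no spectral measure of a function in $L^2_0\xbm$ is Dirichlet, so $L^2_0(\cb\otimes\{\emptyset,Y\})\perp L^2(\ca')$, hence $(q_n)$ is mixing for $T$.) Now fix any $g\in L^2_0\ycn$ with $g\in H_{G(g)}$ and $\sigma_{g,\cs}$ concentrated on a single cyclic piece. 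The restriction of $\tfs$ to $L^2\xbm\otimes G(g)$ is, via the spectral isomorphism (see \eqref{tens2}), unitarily equivalent to the operator $\widetilde F(x,\chi)\mapsto\chi(\va(x))\widetilde F(Tx,\chi)$ on $L^2(X\times\widehat G,\mu\otimes\sigma_{g,\cs})$ — a direct integral over $\chi$ of the weighted operators $\vhf$. For $\sigma_{g,\cs}$-a.e.\ $\chi$, since $\chi\notin\svf$ (because $\sigma_{\cs}(\svf)=0$ and $\sigma_{g,\cs}\ll\sigma_{\cs}$), the cocycle $\chi\circ\va$ is not a quasi-coboundary, so by Corollary~\ref{lm2} the mixing sequence $(q_n)$ for $T$ is also a mixing sequence for $\vhf$, i.e.\ $\vhf^{q_n}\to 0$ weakly. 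By dominated convergence in the direct integral, $(V_\va)^{q_n}\to 0$ weakly on $L^2\xbm\otimes G(g)$. Since $g\in L^2_0\ycn$ was arbitrary, $(\tfs)^{q_n}\to 0$ weakly on the orthocomplement of $L^2\xbm\otimes 1_Y$, while on $L^2\xbm\otimes 1_Y\cong L^2\xbm$ it acts as $T^{q_n}\to\Phi_{\mu\ot\mu}$. Hence $(\tfs)^{q_n}\to\Phi_{\mu\ot\nu}=\Phi_{(\mu\ot\nu)\ot(\mu\ot\nu)}$ in ${\cal J}(\tfs)$, so the limit idempotent $E(\cdot\,|\,\ca')$ is the projection onto constants, forcing $\ca'$ to be trivial — contradicting non-triviality. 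Therefore $\tfs$ has no non-trivial rigid factor, i.e.\ it is mildly mixing.

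The main obstacle I anticipate is the careful passage from "$(q_n)$ is a mixing sequence for each fibre operator $\vhf$" to "$(q_n)$ is a mixing sequence for $V_\va$ on the whole direct integral $L^2\xbm\otimes G(g)$": Corollary~\ref{lm2} is applied pointwise in $\chi$, but weak convergence of the integrated operators needs a uniform integrability / dominated-convergence argument (the matrix coefficients $\langle\vhf^{q_n}f_1,f_2\rangle$ are uniformly bounded by $\|f_1\|\,\|f_2\|$, so this is routine, but it has to be said). A secondary delicate point is the very first reduction: one must be sure that the rigid factor $\ca'$ produced by Proposition~\ref{idemp1} is genuinely a factor of $\tfs$ and that mild mixing of the base $T$ (together with Lemma~\ref{tens3}-style reasoning about $\tfs$-invariant, or here rigid, functions) really does force the $\cb$-component of $\ca'$ to be trivial; this is exactly the spot where the hypothesis that $T$ is mildly mixing — rather than merely weakly mixing — is used, and it is also where one needs $\svf$ (the quasi-coboundary group) and not just $\lf$, since rigidity along $(q_n)$ corresponds to the cocycle $\chi\circ\va$ being a quasi-coboundary with the relevant constant being a rigidity value, i.e.\ to $\chi\in\svf$.
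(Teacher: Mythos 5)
Your proof is correct, and while it rests on the same two pillars as the paper's argument --- the characterization of mild mixing via rigid factors / Dirichlet spectral measures, and Corollary~\ref{lm2} applied to the fibre operators $\vhf$ --- it is organized in the opposite order, so it is worth comparing. The paper first proves a purely \emph{local} claim: for a fixed $\chi$, any positive Dirichlet measure $\ll\sigma_{\vhf}$ forces $\chi\in\svf$; this is done by realizing such a measure as a spectral measure of the Anzai skew product $T_{\chi\circ\va}$, extracting via Lemma~\ref{lmm1} a rigidity sequence for that skew product which is simultaneously a mixing sequence for $T$, and then invoking Corollary~\ref{lm2}. The globalization is then purely spectral: by Lemma~\ref{l1}, a Dirichlet piece of $\stfs$ must charge the set of $\chi$ with $\sigma_{\vhf}(A)>0$, where $A=\{z:\:z^{n_j}\to1\}$ is fixed in advance, and that set lies in $\svf$. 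You instead globalize \emph{first}: a single application of Proposition~\ref{idemp1} to $\tfs$ produces one rigidity sequence $(q_n)$ with $(\tfs)^{q_n}\to E(\cdot|\ca')$; the Lemma~\ref{lmm1}-type argument (Dirichlet type on $L^2_0(\ca')$ versus the absence of Dirichlet measures $\ll\sigma_T$) upgrades $(q_n)$ to a mixing sequence for $T$; and Corollary~\ref{lm2} is then applied fibre-wise in the direct-integral picture $\tfs|_{L^2\xbm\ot G(g)}\cong V_\va$ of~(\ref{tens2}), with dominated convergence of the matrix coefficients giving $(\tfs)^{q_n}\to$ projection onto constants and hence the contradiction. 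This avoids the auxiliary skew products $T_{\chi\circ\va}$ (one for each $\chi$) at the price of the direct-integral bookkeeping, and it has the small aesthetic advantage of exhibiting a single mixing sequence that works simultaneously in almost every fibre. The two points you flag as delicate are indeed the right ones and are handled correctly; for the final passage from the individual cyclic spaces $G(g)$ to all of $L^2\xbm\ot L^2_0\ycn$ you should also record that $L^2_0\ycn$ splits into an orthogonal sum of $\cs$-cyclic subspaces and that each $L^2\xbm\ot G(g)$ is $\tfs$-invariant, so the cross terms in the matrix coefficients vanish identically.
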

\begin{proof}
First, we claim that, if some positive measure
$\sigma_1\ll\sigma_{\vhf}$ is a Dirichlet measure, then
$\chi\in\svf$.

Indeed, then there exists $f\in L^2\xbm$ such that
$\si_{f,\vhf}=\si_1$. If we consider $F=f\otimes z\in L^2(X\times
\T,\mu\ot\la)$ (here $z$ denotes the identity function from $\T$
to $\C$), we have $F\circ T_{\chi\circ\varphi}^n=\vhf^nf\ot z$ for
all $n\in \Z$ and it follows that
$\sigma_{F,T_{\chi\circ\varphi}}=\sigma_1$. Since $\si_1$ is a
Dirichlet measure, $F$ is measurable with respect to some rigid
factor $\ca$ of $T_{\chi\circ\varphi}$.

In view of Lemma~\ref{lmm1}, there exist a factor $\ca'$ of
$T_{\chi\circ\varphi}$ containing $\ca$ and a mixing sequence
$(q_n)$ for $T$ such that $T_{\chi\circ\varphi}^{q_n}\to
E(\cdot|\ca')$. In particular $F\circ
T_{\chi\circ\varphi}^{q_n}\to F$ and $\vhf^{q_n}f\to f$, so
$(q_n)$ is not a mixing sequence for $\vhf$. But
Corollary~\ref{lm2} implies then that the cocycle
$\chi\circ\varphi$ is a quasi-coboundary, in other words
$\chi\in\svf$, which proves our claim.

It remains to show that there is no positive measure
$\sigma\ll\stfs$ which is a Dirichlet measure. Suppose the
contrary: then, for some sequence $n_j\to\infty$, $z^{n_j}\to1$
$\sigma$-a.e. It follows that if we set
$A=\{z\in\T:\:z^{n_j}\to1\}$, then $\stfs(A)>0$. Since $T$ is
mildly mixing and thus $\sigma_T(A)=0$, it follows by
Lemma~\ref{l1} that $\sigma_{\cs}(\{\chi\in\widehat{G}:\:
\sigma_{V_{\chi\circ\varphi}}(A)>0\})>0$.

But clearly if $\sigma_{V_{\chi\circ\varphi}}(A)>0$, then the
positive measure $\sigma_{V_{\chi\circ\varphi}}|_A$ is Dirichlet,
hence $\chi$ must belong to $\svf$ by the first part of the proof.
Since $\sigma_{\cs}(\svf)=0$, we obtain a  contradiction.
\end{proof}

\begin{Remark}\label{remlmm3}\em
Supposing only that $T$ is mildly mixing, we get that each rigid
function of $\tfs$ belongs to $L^2\xbm\otimes H_{\svf}$. Indeed,
if $\si_{g,\cs}(\svf)=0$, in view of Lemma~\ref{zima3}, the same
proof gives that there is no Dirichlet measure
$\si\ll\si_{\tfs|_{L^2\xbm\otimes G(g)}}$.

Let us denote by $\ca_{\svf}$ is the factor of $\cs$ corresponding
to the saturated group $\svf$ according to Corollary~\ref{c1}, so
that $H_{\svf}=L^2(\ca_{\svf})$. In other words,
\beq\label{rigidfac} \mbox{\em each rigid factor of $\tfs$ is
contained in $\cb\otimes\ca_{\svf}$.} \eeq
\end{Remark}
\begin{Prop}\label{compl1}
Assume that $T$ is weakly mixing (and $\si_{\cal S}(\svf)>0$).
Then there exists an automorphism $U$ of
$(Y|_{\ca_{\svf}},\ca_{\svf},\nu|_{\ca_{\svf}})$ such that
$\tfs|_{\cb\otimes\ca_{\svf}}$ is isomorphic to $T\times U$.
\end{Prop}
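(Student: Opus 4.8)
The plan is to exhibit the structure of $\tfs|_{\cb\otimes\ca_{\svf}}$ by first identifying a complete set of eigenvalues/eigenfunctions supported on this piece, and then showing that the cocycle $x\mapsto S_{\va(x)}|_{\ca_{\svf}}$ is, after a suitable unitary change of coordinates, constant in $x$. The key observation is that $\ca_{\svf}$, being the factor attached by Corollary~\ref{c1} to the saturated group $\svf$, has the property that $\cs|_{\ca_{\svf}}$ has pure point spectrum with point spectrum contained in $\svf$: indeed for $\chi\in\svf$, by definition $\chi\circ\va=c_\chi\cdot\xi_\chi/\xi_\chi\circ T$ for some constant $c_\chi\in\T$ and measurable $\xi_\chi:X\to\T$. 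I would first check, exactly as in Lemma~\ref{l2} applied with $\Lambda=\lf$ inside the coset decomposition $\svf=\bigcup_j \chi_j\lf$ (here using that $T$ weakly mixing forces $\si_\cs(\lf)=0$, via Proposition~\ref{p1} and the remark after it, since $\tfs$ is ergodic when $T$ is weakly mixing and $\si_\cs(\lf)=0$ — and one reduces to that case), that $H_{\svf}$ is spanned by eigenfunctions of $\cs$, so $L^2(\ca_{\svf})=\ov{\rm span}\{g_i\}$ with $S_g g_i=\chi_i(g)g_i$ and each $\chi_i\in\svf$.

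Next I would write down the action. On $L^2\xbm\otimes G(g_i)$, just as in formula~(\ref{tens2}), the operator $\tfs$ acts by $(V_\va\widetilde F)(x,\chi)=\chi(\va(x))\widetilde F(Tx,\chi)$; restricting to the eigendirection $\chi=\chi_i$ this is $V_{\chi_i\circ\va}$ on the $f$-variable. Since $\chi_i\in\svf$, there is $c_i\in\T$ and a unit-modulus measurable $\xi_i$ with $\chi_i\circ\va=c_i\cdot\xi_i/\xi_i\circ T$. Following the mechanism of Proposition~\ref{tens6}, I would select these transfer functions measurably in $i$ (equivalently, $\si_{\cs}|_{\svf}$-a.e., using the measurable selector of transfer functions from \cite{Ho-Me-Pa}, noting that $\svf$ is itself of the form $\Lambda_\psi$ by Remark~\ref{sigmafsat}, so the selector result applies), and also select the constants $c_i$ measurably; this gives a measurable unit-modulus function $\widetilde F(x,\chi)$ on $X\times\svf$ and a measurable $c:\svf\to\T$ with $\chi(\va(x))=c(\chi)\cdot\widetilde F(x,\chi)/\widetilde F(Tx,\chi)$. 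As in Proposition~\ref{tens6}, $\widetilde F(x,\cdot)$ and $c(\cdot)$ define, through the canonical action of $L^\infty(\widehat G,\cb(\widehat G),\si_\cs|_{\svf})$ on $L^2(\ca_{\svf})$, unitary operators $W_x$ on $L^2(\ca_{\svf})$ and a single unitary $U_0$ (multiplication by $c$) such that $S_{\va(x)}|_{L^2(\ca_{\svf})}=W_x\,U_0\,W_{Tx}^{-1}$ for $\mu$-a.a.\ $x$. Thus the cocycle $x\mapsto S_{\va(x)}|_{\ca_{\svf}}$ is cohomologous (via the transfer function $x\mapsto W_x$) to the constant cocycle $U_0$, which shows $\tfs|_{\cb\otimes\ca_{\svf}}$ is isomorphic to $T\times U_0$ as unitary-valued skew products.

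Finally, as at the end of Proposition~\ref{tens6}, I would pass from the unitary group $\cu(L^2(\ca_{\svf}))$ back to ${\rm Aut}(\ca_{\svf})$: the multiplication operator $U_0$ by a $\T$-valued function on the spectral side corresponds to an actual measure-preserving automorphism $U$ of $(Y|_{\ca_{\svf}},\ca_{\svf},\nu|_{\ca_{\svf}})$ (it carries each eigenfunction $g_i$ to $c_i g_i$, which is again a unimodular eigenfunction spanning a rank-one subspace, hence $U_0$ preserves the order structure and maps $L^\infty$ to $L^\infty$, so it is induced by a point map), and likewise $x\mapsto W_x$ takes values in ${\rm Aut}(\ca_{\svf})$ because it maps each $G(g_i)$ to itself by multiplication by a unimodular function; since ${\rm Aut}(\ca_{\svf})$ is a closed subgroup of $\cu(L^2(\ca_{\svf}))$, the cohomology and the resulting isomorphism take place in ${\rm Aut}(\ca_{\svf})$, giving $\tfs|_{\cb\otimes\ca_{\svf}}\cong T\times U$. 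The main obstacle I anticipate is the measurable selection of transfer functions \emph{and} constants simultaneously over the coset decomposition of $\svf$, and verifying carefully that the constructed $U_0$ and $W_x$ genuinely land in ${\rm Aut}$ rather than merely in $\cu$; the rest is a direct transcription of the argument already carried out for $\lf$ in Proposition~\ref{tens6}, with $\lf$ replaced by $\svf$ and "coboundary" replaced by "quasi-coboundary".
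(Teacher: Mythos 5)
Your overall skeleton (a measurable selector of transfer functions over $\svf$, multiplication operators $W_x$ and a constant multiplication $U_0$ with $S_{\va(x)}|_{\svf}=U_0W_xW_{Tx}^{-1}$) matches the paper's proof up to the point where one must show that the constant is realized by an automorphism; but there are two genuine gaps. First, your opening reduction is not available: the proposition does not assume $\si_{\cs}(\lf)=0$ and is invoked later in situations where $\tfs$ need not be ergodic (e.g.\ Corollary~\ref{compl2}), and weak mixing of $T$ does not force $\si_{\cs}(\lf)=0$ --- if $\va$ is a coboundary then $\lf=\widehat{G}$. Hence Lemma~\ref{l2} need not apply and $\cs|_{\ca_{\svf}}$ need not have pure point spectrum (take $\cs$ a weakly mixing Gaussian action with $\si_{\cs}$ concentrated on $\lf$, as in the footnote of Section~\ref{tensorowa}). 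Everything in your argument that leans on the eigenfunction basis $\{g_i\}$ --- in particular the final claims that $U_0$ ``carries each eigenfunction to $c_ig_i$, hence is induced by a point map'' and that $W_x$ lies in ${\rm Aut}(\ca_{\svf})$ because it preserves each $G(g_i)$ --- therefore collapses in general; and even where eigenfunctions exist, ``preserves each cyclic subspace by a unimodular spectral multiplier'' does not imply membership in ${\rm Aut}$.

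Second, and this is the real missing idea: the passage from the unitary identity $S_{\va(x)}|_{\svf}=U_0W_xW_{Tx}^{-1}$ to the conclusion that the constant is an automorphism cannot be done by inspecting $U_0$ directly, because neither $U_0$ nor $W_x$ is a priori in ${\rm Aut}(\ca_{\svf})$; only the combination $S_{\va(x)}$ is. The paper resolves this by passing to the $(T\times T,\mu\ot\mu)$-cocycle $(x_1,x_2)\mapsto S_{\va(x_2)}S_{\va(x_1)}^{-1}|_{\svf}$, which genuinely takes values in ${\rm Aut}(\ca_{\svf})$ and in which the constant cancels, so it is a coboundary in ${\cal U}(L^2(\ca_{\svf}))$ with transfer map $(x_1,x_2)\mapsto W_{x_2}W_{x_1}^{-1}$ and hence, by closedness of ${\rm Aut}$ in the unitary group, a coboundary with some ${\rm Aut}$-valued transfer map $V_{x_1,x_2}$. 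Ergodicity of $T\times T$ (the second place where weak mixing of $T$ is essential, besides making the constant $u(\chi)$ well defined) forces $W_{x_1}W_{x_2}^{-1}V_{x_1,x_2}$ to be a constant unitary, and freezing $x_1$ then yields an ${\rm Aut}$-valued transfer function for the original equation, from which $U\in{\rm Aut}(\ca_{\svf})$ follows. You correctly flagged this as the main obstacle, but the justification you propose for it does not work; this doubling argument is the content you would need to supply.
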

\begin{proof}
By definition, if $\chi\in\svf$ the cocycle $\chi\circ\varphi$ is
cohomologous to a constant $e^{2\pi i t}$. However $T$ is weakly
mixing, so this constant is unique,
hence we can write $t=t(\chi)$ ($t(\chi)\in[0,1)$). Then, as in
the case of $\lf$, there exists a measurable selector of transfer
functions $\chi\mapsto \til{F}(\cdot,\chi)$ defined on
$\si_\cs|_{\svf}$ a.e., equivalently a measurable function
$\til{F}$ of modulus 1 on $X\times \svf$ such that \beq\label{27b}
\chi(\varphi(x))=e^{2\pi it(\chi)}\til{F}(x,\chi)/\til{F}(Tx,\chi)
\;\;\mbox{for}\;\mu\otimes\si_\cs|_{\svf}\mbox{-a.a.}\; (x,\chi).
\eeq In particular, $e^{2\pi it(\chi)}=u(\chi)$,
$\si_\cs|_{\svf}$-a.e., where $u$ is a measurable function of
modulus 1 on $\svf$.

Then, as in the proof of Proposition~\ref{tens6}, we deduce that
there exist unitary operators $U$ and $W_x$ ($x\in X$) of
$L^2(\ca_{\svf})$ corresponding to the multiplication by $u$ and
$\til{F}(x,\cdot)$ respectively, so that \beq\label{27}
S_{\va(x)}|_{\svf}=UW_xW_{Tx}^{-1}\quad\mu\mbox{-a.e.}~x. \eeq We
have to show that $U$ corresponds to an automorphism. Let us
consider the $(T\times T, \mu\ot\mu)$-cocycle $(x_1,x_2)\mapsto
S_{\va(x_2)}S_{\va(x_1)}^{-1}|_{\svf}$. In view of~(\ref{27}) (and
the fact that the operators under consideration commute), it is a
coboundary with the transfer operator map $(x_1,x_2)\mapsto
W_{x_2}W_{x_1}^{-1}$, whence, as in the proof of
Proposition~\ref{tens6}, it is also a coboundary as a cocycle with
values in  Aut$(\ca_{\svf})$. Thus there exists a measurable map
$(x_1,x_2)\mapsto V_{x_1,x_2}\in\hbox{Aut}(\ca_{\svf})$ with
$$
S_{\va(x_2)}S_{\va(x_1)}^{-1}|_{\svf}=V_{x_1,x_2}V_{Tx_1,Tx_2}^{-1}\quad
\mbox{for }\mu\otimes\mu\mbox{-a.a.}~(x_1,x_2).
$$
Since $T$ is weakly mixing, $T\times T$ is ergodic and therefore
the two transfer operator maps must coincide up to a constant.
More precisely, $W_{x_1}W_{x_2}^{-1}V_{x_1,x_2}$ is $T\times
T$-invariant, so there exists a unitary operator $V$ of
$L^2(\ca_{\svf})$ such that
$$W_{x_1}W_{x_2}^{-1}V_{x_1,x_2}=V\quad\mbox{for }
\mu\otimes\mu\mbox{-a.a.}~(x_1,x_2).$$ By selecting $x_1$ so that
the above equality is true for $\mu$-a.e.\ $x_2$, we obtain
$$V_{x_1,x}=W_{x}W_{x_1}^{-1}V\quad\mbox{for }\mu\mbox{-a.a.}~x.$$
Then the map $x\mapsto V_{x_1,x}\in\hbox{Aut}(\ca_{\svf})$ is also
a transfer operator map for the equation~(\ref{27}):
$$ UV_{x_1,x}V_{x_1,Tx}^{-1}=UW_xW_{Tx}^{-1}=S_{\va(x)}|_{\svf}
\quad\mu\mbox{-a.e.}~x.$$ Therefore $U\in\hbox{Aut}(\ca_{\svf})$,
$x\mapsto S_{\va(x)}$ is cohomologous to the constant $U$ in
Aut$(\ca_{\svf})$, and the result follows.
\end{proof}

\begin{Cor}\label{compl2} Assume that $T$ is mildly mixing. Then
$\tfs$ is not mildly mixing if and only if there exists a
non-trivial factor $\ca$ of $\cal S$ and an automorphism $U$ of
$(Y|_{\ca},\ca,\nu|_{\ca})$ which is not mildly mixing such that
$\tfs|_{\cb\otimes\ca}$ is isomorphic to $T\otimes U$.
\end{Cor}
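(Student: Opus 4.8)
The plan is to derive this as a straightforward consequence of the structural results already at hand, namely Remark~\ref{remlmm3} (equation~(\ref{rigidfac})) and Proposition~\ref{compl1}. First I would treat the ``if'' direction, which is essentially immediate: if $\tfs|_{\cb\otimes\ca}$ is isomorphic to $T\otimes U$ for some non-trivial factor $\ca$ of $\cs$ and some automorphism $U$ of $(Y|_{\ca},\ca,\nu|_{\ca})$ that is not mildly mixing, then $U$ has a non-trivial rigid factor $\cb'\subset\ca$, and hence $T\otimes U$ has the non-trivial rigid factor $\{\emptyset,X\}\otimes\cb'$ (a rigid factor of a product is rigid; more concretely, a rigidity sequence for $U|_{\cb'}$ is a rigidity sequence for $(T\otimes U)|_{\{\emptyset,X\}\otimes\cb'}$). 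Transporting this factor back through the isomorphism exhibits a non-trivial rigid factor of $\tfs$, so $\tfs$ is not mildly mixing.

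For the ``only if'' direction I would argue as follows. Assume $T$ is mildly mixing but $\tfs$ is not; then $\tfs$ has a non-trivial rigid factor. By Remark~\ref{remlmm3}, equation~(\ref{rigidfac}), every rigid factor of $\tfs$ is contained in $\cb\otimes\ca_{\svf}$, so $\ca_{\svf}$ must be a non-trivial factor of $\cs$ (if $\ca_{\svf}$ were trivial, $\cb\otimes\ca_{\svf}$ would carry no non-trivial rigid factor since $T$ is mildly mixing, contradicting the existence of a rigid factor of $\tfs$); equivalently $\sigma_{\cs}(\svf)>0$. Since $T$ is mildly mixing it is in particular weakly mixing, so Proposition~\ref{compl1} applies with $\ca=\ca_{\svf}$: there is an automorphism $U$ of $(Y|_{\ca_{\svf}},\ca_{\svf},\nu|_{\ca_{\svf}})$ such that $\tfs|_{\cb\otimes\ca_{\svf}}$ is isomorphic to $T\times U$. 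It remains to check that this $U$ is not mildly mixing. If $U$ were mildly mixing, then, as $T$ is mildly mixing and the mild mixing property is stable under products, $T\times U$ would be mildly mixing; hence $\tfs|_{\cb\otimes\ca_{\svf}}$ would be mildly mixing and would have no non-trivial rigid factor. But by~(\ref{rigidfac}) every rigid factor of $\tfs$ sits inside $\cb\otimes\ca_{\svf}$, so $\tfs$ would then have no non-trivial rigid factor either, i.e.\ $\tfs$ would be mildly mixing --- contradicting our assumption. Therefore $U$ is not mildly mixing, and taking $\ca=\ca_{\svf}$ completes the proof.

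The only point requiring a small amount of care --- and the step I would flag as the main (minor) obstacle --- is the invocation that the mild mixing property is preserved under direct products, i.e.\ that $T$ and $U$ mildly mixing implies $T\times U$ mildly mixing; this is classical (see \cite{Fu-We}) but I would state it explicitly, perhaps recalling that a non-trivial rigid factor of $T\times U$ would produce, via its maximal spectral type being a Dirichlet measure on $\widehat{\Z}$ for a common rigidity sequence, a Dirichlet spectral measure for $T$ or for $U$ unless it is trivial over both coordinates, which is impossible. Everything else is bookkeeping: the ``if'' direction is a one-line transport of a rigid factor, and the ``only if'' direction is just the combination of~(\ref{rigidfac}) with Proposition~\ref{compl1} plus this product-stability remark.
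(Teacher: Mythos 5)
Your proposal is correct and follows essentially the same route as the paper: the ``if'' direction by transporting a rigid factor, and the ``only if'' direction by combining~(\ref{rigidfac}) with Proposition~\ref{compl1} and the classical fact (from \cite{Fu-We}) that mild mixing is preserved under products, which the paper uses implicitly in the step ``as $T$ is mildly mixing, $U$ cannot be mildly mixing.'' Your version merely makes explicit two points the paper leaves tacit (the non-triviality of $\ca_{\svf}$ and the product-stability of mild mixing), so no further comparison is needed.
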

\begin{proof}
In view of (\ref{rigidfac}), if $\tfs$ is not mildly mixing,
neither is $\tfs|_{\cb\otimes\ca_{\svf}}$ and we apply
Proposition~\ref{compl1}. Then $T\times U$ is not mildly mixing
and, as $T$ is mildly mixing, $U$ cannot be mildly mixing. The
other direction  is clear.\end{proof}

\begin{Remark} \em It turns out that in
Corollary~\ref{compl2} we can replace $U$ non mildly mixing by $U$
rigid. Indeed,  in the proof of Proposition~\ref{compl1}, $U$
corresponds to the  multiplication by $u(\chi)$, and given a rigid
factor $\ca$ of $U$ defined by $U^{n_j}h\to h$ for some sequence
$(n_j)$, we have that $L^2(\ca)$ is the spectral subspace of ${\cal
S}|_{\ca_{\svf}}$ corresponding to $\{\chi\in\svf:\:
u(\chi)^{n'_j}\to1\}$ for some subsquence; in particular, $\ca$ is
also $\cal S$-invariant. Moreover, the cocycle
$S_{\va(x_2)}S_{\va(x_1)}^{-1}|_{\svf}$is still cohomologous to
the constant $U$ in the closed subgroup of ${\rm Aut}(\ca_{\svf})$
of all automorphisms corresponding to multiplications by
unit-modulus functions (in the spectral representation of
$\cs|_{\ca_{\svf}}$). So, the automorphisms $V_{x_1,x_2}$ can be
taken in this subgroup and hence preserving the invariant
subspaces of $\cal S$.  Then $\cb\ot \ca$ is preserved by the
conjugation automorphism and we have relative isomorphism of
$\tfs|_{\cb\otimes\ca}$ with $T\otimes U|_{\cb\otimes\ca}$.
\end{Remark}

We now show that under the recurrence property of $\varphi$ the
converse of Proposition~\ref{lmm3} holds.

Let $\si\in M^+(\widehat{G})$. Denote by ${\cal U}(\si)$ the group
of measurable functions of modulus~1 defined on $\widehat{G}$,
modulo equality $\si$-a.e. We endow ${\cal U}(\si)$ with the
$L^2(\si)$-topology, which makes it a Polish group. Given $g\in
G$, we still denote by $\tilde{g}$ the function $\chi\mapsto
\chi(g)$ taken as an element of ${\cal U}(\si)$. Then we define
$\va_\si$ from $X$ to ${\cal U}(\si)$ by setting
$$
\va_\si(x)(\chi)=\chi(\varphi(x))\;\;\mbox{for
each}\;\chi\in\widehat{G},
$$
{\em i.e.\ }$\va_\si$ is the composition of $\va$ and of the map
$g\mapsto \tilde{g}$. As the latter map is a continuous group
homomorphism, it is clear from the definition (\ref{rec1}) that
\beq\label{rec10} \mbox{\em if $\va$ is recurrent then so is
$\va_\si$.} \eeq
\begin{Prop}\label{compl3} If $T$ is mildly mixing, then
$\tfs$ is mildly mixing if and only if $\sigma(\svf)=0$ for each
positive measure $\sigma\ll\sigma_{\cal S}$ such that the cocycle
$\va_\si$ is recurrent.

In particular, if $\va$ is recurrent and  $\tfs$ is mildly mixing
then $\sigma_{\cal S}(\svf)=0$.\end{Prop}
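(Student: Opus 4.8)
The plan is to prove both implications in contrapositive form, after isolating one auxiliary observation that serves as a dictionary between the recurrence hypothesis and the spectral description of $\tfs$. Since $T$ is mildly mixing it is weakly mixing, so Proposition~\ref{compl1} applies and, for each $\chi\in\svf$, the constant $e^{2\pi it(\chi)}$ to which $\chi\circ\va$ is cohomologous is unique and depends measurably on $\chi$; write $c(\chi):=e^{2\pi it(\chi)}$. Here is the dictionary: if $\sigma\ll\sigma_{\cs}$ is concentrated on $\svf$, then the measurable selector $\til F$ of transfer functions produced in the proof of Proposition~\ref{compl1} (equation for $\chi\circ\va$ there) exhibits $\va_\sigma$ as cohomologous, \emph{inside the Polish group} $\cu(\sigma)$, to the constant cocycle with value $\til c:=(\chi\mapsto c(\chi))\in\cu(\sigma)$, via the transfer map $x\mapsto\til F(x,\cdot)\in\cu(\sigma)$. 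Hence, by~(\ref{rec2}) and~(\ref{rec3}), $\va_\sigma$ is recurrent if and only if $\til c$ is recurrent, if and only if $c(\chi)^{m_j}\to1$ in $L^2(\sigma)$ for some $m_j\to\infty$ (equivalently $\sigma$-a.e.\ along a subsequence, these functions having modulus $1$). I expect this dictionary to be the main point requiring care: one must keep track of what ``recurrent'' means in the non-locally-compact group $\cu(\sigma)$ and translate it faithfully into the $L^2(\sigma)$-statement $c^{m_j}\to1$.

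For the implication ``$\tfs$ mildly mixing $\Rightarrow\sigma(\svf)=0$ for every admissible $\sigma$'', I would argue by contradiction: suppose $\sigma\ll\sigma_{\cs}$ is positive, $\va_\sigma$ is recurrent and $\sigma(\svf)>0$. Replacing $\sigma$ by $\sigma|_{\svf}$ keeps it positive and keeps $\va_\sigma$ recurrent, since the restriction map $\cu(\sigma)\to\cu(\sigma|_{\svf})$ is a continuous group homomorphism through which $\va_{\sigma|_{\svf}}$ factors (cf.\ the argument for~(\ref{rec10})). By the dictionary there is $n_j\to\infty$ with $c(\chi)^{n_j}\to1$ $\sigma$-a.e.; put $E:=\{\chi\in\svf:\ c(\chi)^{n_j}\to1\}$, so $\sigma(\svf\setminus E)=0$ and hence $\sigma_{\cs}(E)>0$ (as $\sigma$ is positive and $\sigma\ll\sigma_{\cs}$). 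Since $\sigma_{\cs}(\svf)>0$, Proposition~\ref{compl1} gives $\tfs|_{\cb\ot\ca_{\svf}}\cong T\times U$, where $U$ acts as multiplication by $c(\cdot)$ in the spectral representation of $\cs|_{\ca_{\svf}}$. As in the Remark following Corollary~\ref{compl2}, the spectral subspace of $\cs|_{\ca_{\svf}}$ attached to $E$ is the $L^2$-space of a non-trivial $\cs$-factor, it is $U$-invariant, and $U^{n_j}\to\mathrm{Id}$ on it by dominated convergence; so $U$ is not mildly mixing. Hence $T\times U$, so its isomorphic copy $\tfs|_{\cb\ot\ca_{\svf}}$, so $\tfs$, is not mildly mixing — a contradiction.

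For the converse, suppose $\tfs$ is not mildly mixing. By Corollary~\ref{compl2} together with the Remark following it (both requiring only that $T$ is mildly mixing), there are a non-trivial $\cs$-factor $\ca\subset\ca_{\svf}$ and a sequence $n'_j\to\infty$ such that $L^2(\ca)$ is the spectral subspace of $\cs|_{\ca_{\svf}}$ attached to the Borel set $E':=\{\chi\in\svf:\ c(\chi)^{n'_j}\to1\}$; in particular $\sigma_{\cs}(E')>0$ (otherwise $\ca$ would be trivial). Put $\sigma:=\sigma_{\cs}|_{E'}$. Then $\sigma$ is positive, $\sigma\ll\sigma_{\cs}$, $\sigma$ is concentrated on $\svf$ so $\sigma(\svf)>0$, and $c(\chi)^{n'_j}\to1$ everywhere on $E'$, hence in $L^2(\sigma)$ by dominated convergence; so the dictionary makes $\va_\sigma$ recurrent. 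This $\sigma$ violates the stated condition, which therefore forces $\tfs$ to be mildly mixing. (Alternatively, this direction can be obtained by rerunning the proof of Proposition~\ref{lmm3}: from a Dirichlet measure below $\stfs$ one extracts, via Lemma~\ref{l1} and mild mixing of $T$, a set $B\subset\svf$ with $\sigma_{\cs}(B)>0$ on which $\sigma_{\vhf}$ is Dirichlet, which — using that the $L^2_0$-part of $\sigma_T$ carries no mass where the relevant powers converge — forces $c(\chi)^{n_j}\to1$ on $B$, and one concludes with the dictionary applied to $\sigma_{\cs}|_B$.)

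Finally, for the ``in particular'' clause: if $\va$ is recurrent then $\va_{\sigma_{\cs}}$ is recurrent by~(\ref{rec10}), so $\sigma:=\sigma_{\cs}$ is admissible; if in addition $\tfs$ is mildly mixing, the first implication yields $\sigma_{\cs}(\svf)=\sigma(\svf)=0$.
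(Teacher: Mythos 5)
Your proof is correct and follows essentially the same route as the paper: your ``dictionary'' is exactly the paper's reading of the cohomology equation~(\ref{27b}) inside ${\cal U}(\sigma)$ combined with~(\ref{rec2}) and~(\ref{rec3}), and both directions then rest on Proposition~\ref{compl1} together with~(\ref{rigidfac}). The only cosmetic difference is that in the converse direction you take $\sigma=\sigma_{\cs}|_{E'}$ where the paper takes $\sigma=\sigma_{h,{\cal S}}$ for a rigid function $h$ of $U$.
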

\begin{proof}
We keep the notation as in the proof of Proposition~\ref{compl1}:
$U$ is the automorphism of
$(Y|_{\ca_{\svf}},{\ca_{\svf}},\nu|_{\ca_{\svf}})$ corresponding
to the unit-modulus function $u$ on $\svf$, and the equation
(\ref{27b}) may now be written as
$$
\va_\si(\chi)=u(\chi)\til{F}(x,\chi)/\til{F}(Tx,\chi)
\;\;\mbox{for}\;\mu\otimes\si_\cs|_{\svf}\mbox{-a.a.}\; (x,\chi).
$$

Suppose that $\va_\si$ is recurrent for some positive measure $\sigma\ll\sigma_{\cal S}$ with $\sigma(\svf)>0$.
We can then assume that $0<\sigma\ll\sigma_{\cal S}|_{\svf}$. Then
the constant cocycle $u$ restricted to ${\cal U}(\si)$ is
cohomologous to $\va_\si$ and it is also recurrent by
(\ref{rec2}). Thus, in view of (\ref{rec3}), there is a sequence
$(n_j)$ with $u^{n_j}\to 1$ in ${\cal U}(\sigma)$, whence
$U^{n_j}h\to h$ for each function $h$ such that $\sigma_{h,{\cal
S}}\ll\sigma$. It follows that $U$ is not mildly mixing.

For the other direction, if $\tfs$ is not mildly mixing, then $U$
is not mildly mixing and we find conversely that $u$ restricted to
${\cal U}(\sigma_{h,{\cal S}})$ is recurrent, for some non-zero
$h\in L^2_0(\ca_{\svf})$. Then $\va_{\si_{h,{\cal S}}}$ is also recurrent and $\si_{h,{\cal S}}(\svf)>0$.

The second assertion follows then from (\ref{rec10}): if $\va$ is
recurrent, then the cocycle $\va_{\si_\cs}$ is also recurrent.
\end{proof}

\subsection{Lifting mixing and multiple mixing}
We give here two corollaries of results from \cite{Le-Pa}.

\begin{Prop}[\cite{Le-Pa}]\label{mix1}
Assume that $T$ is mixing. If $\sigma_{\cs}(\svf)=0$ then $\tfs$
is mixing. Conversely, if $\sigma_{\cs}(\svf)>0$ and $\va$ is
recurrent then $\tfs$ is not mixing. \bez\end{Prop}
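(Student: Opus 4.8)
The plan is to prove the two implications separately, using in both the spectral decomposition of $\tfs$ from Lemma~\ref{l1} and the factor $\ca_{\svf}$ of $\cs$ attached to the saturated group $\svf$. \emph{Sufficiency.} Suppose $T$ is mixing; then $T$ is weakly mixing and \emph{every} sequence $n_i\to\infty$ is a mixing sequence for $T$. By Corollary~\ref{lm2}, for every $\chi$ with $\chi\circ\va$ not a quasi-coboundary, i.e.\ for every $\chi\notin\svf$, every such $n_i$ is then a mixing sequence for $\vhf$, that is $\vhf^n\to0$ in the weak operator topology. Since $\sigma_{\cs}(\svf)=0$, this holds for $\sigma_{\cs}$-a.e.\ $\chi$, hence for $\sigma_{g,\cs}$-a.e.\ $\chi$, for every $g\in L^2_0\ycn$. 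To lift this to $\tfs$ I would invoke the identity $\widehat{\sigma}_{f\otimes g,\tfs}(n)=\int_{\widehat{G}}\widehat{\sigma}_{f,\vhf}(n)\,d\sigma_{g,\cs}(\chi)$ from the proof of Lemma~\ref{l1} (cf.~(\ref{sifotg})): the integrands are bounded in modulus by $\|f\|^2$ and tend to $0$ pointwise $\sigma_{g,\cs}$-a.e., so dominated convergence gives $\langle\tfs^n(f\otimes g),f\otimes g\rangle\to0$. Combined with the mixing of $T$ on $L^2(X,\mu)\otimes1_Y$ (the remaining scalar products of basis vectors $f_n\otimes g_m$ being zero by orthogonality), this yields mixing of $\tfs$.

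\emph{Necessity.} Assume $\sigma_{\cs}(\svf)>0$ and $\va$ recurrent. By Remark~\ref{sigmafsat} and Corollary~\ref{c1}, $\svf$ determines a non-trivial factor $\ca_{\svf}\subset\cc$ of $\cs$ with $L^2(\ca_{\svf})=H_{\svf}$. Since $T$ is mixing, it is weakly mixing, so Proposition~\ref{compl1} provides an automorphism $U$ of $(Y|_{\ca_{\svf}},\ca_{\svf},\nu|_{\ca_{\svf}})$, acting in the spectral picture of $\cs|_{\ca_{\svf}}$ as multiplication by the unit-modulus function $u=e^{2\pi it(\cdot)}$ on $\svf$, with $\tfs|_{\cb\otimes\ca_{\svf}}$ isomorphic to $T\times U$. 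As $\tfs|_{\cb\otimes\ca_{\svf}}$ is a factor of $\tfs$, it suffices to see that $U$ is not mixing. By~(\ref{rec10}) the cocycle $\va_{\sigma_{\cs}|_{\svf}}$ is recurrent, and then, as in the proof of Proposition~\ref{compl3} applied with $\sigma=\sigma_{\cs}|_{\svf}$, the constant cocycle equal to $u$ is recurrent in ${\cal U}(\sigma_{\cs}|_{\svf})$ by~(\ref{rec2}), so by~(\ref{rec3}) there is $n_j\to\infty$ with $u^{n_j}\to1$ in ${\cal U}(\sigma_{\cs}|_{\svf})$, i.e.\ $\int_{\svf}|u^{n_j}-1|^2\,d\sigma_{\cs}\to0$. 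Hence $U^{n_j}\to\mathrm{Id}$ in the strong operator topology, so $U$ is rigid; as $\ca_{\svf}$ is non-trivial, $U$ is not mixing, and neither is $\tfs$.

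\emph{Main obstacle.} The only quantitative step is the dominated-convergence argument in the first part, which hinges on the uniform bound $|\widehat{\sigma}_{f,\vhf}(n)|\le\|f\|^2$; everything else there is bookkeeping around Lemma~\ref{l1}. In the second part the delicate point is that the recurrence of $\va$ must be carried --- through the continuous homomorphism $g\mapsto\tilde g$ into ${\cal U}(\sigma_{\cs}|_{\svf})$ and the cohomology relation furnished by Proposition~\ref{compl1} (equation~(\ref{27b})) --- all the way to recurrence of the \emph{constant} cocycle $u$, which is precisely what unlocks the appeal to~(\ref{rec3}). Conceptually the statement just repackages, via $\ca_{\svf}$ and Lemma~\ref{l1}, facts already present in~\cite{Le-Pa}.
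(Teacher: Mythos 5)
Your argument is correct, but there is nothing in the paper to compare it with line by line: Proposition~\ref{mix1} is stated with the attribution to \cite{Le-Pa} and no proof is given here (the paper only remarks, in the proof of Corollary~\ref{compl4}, that the argument of \cite{Le-Pa}, Theorem~7.1, actually yields $\widehat{\si}_{f\otimes h,\tfs}(n)\to0$ whenever $\si_{h,\cs}(\svf)=0$). Your reconstruction is exactly in the spirit of the present paper's toolkit: for sufficiency you combine Corollary~\ref{lm2} (every $n\to\infty$ is a mixing sequence for $\vhf$ once $\chi\notin\svf$) with the disintegration $\widehat{\si}_{f\otimes g,\tfs}(n)=\int_{\widehat{G}}\widehat{\si}_{f,\vhf}(n)\,d\si_{g,\cs}(\chi)$ and dominated convergence, which is precisely the use of weighted operators the introduction announces for ``completing the picture of lifting mixing''; for necessity you run the $\ca_{\svf}$--plus--recurrence mechanism of Propositions~\ref{compl1} and~\ref{compl3} to exhibit a rigid, hence non-mixing, factor $T\times U$. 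Both halves are sound. One small imprecision: in the sufficiency step, the reduction from ``$\langle\tfs^n(f_k\otimes g_m),f_k\otimes g_m\rangle\to0$ for basis vectors'' to full mixing is not ``by orthogonality''; the correct (and standard) bridge is that these estimates, summed with the weights $2^{-(k+m)}$ of~(\ref{e1}), show that $\stfs$ is a Rajchman measure, and every spectral measure of a function in $L^2_0(X\times Y)$ is absolutely continuous with respect to $\stfs$, hence also Rajchman. With that wording fixed, the proof stands.
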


\begin{Cor}\label{compl4} Assume that $T$ is mixing. Then
$\tfs$ is not mixing if and only if there exists a non-trivial
factor $\ca$ of $\cal S$ and an automorphism $U$ of
$(Y|_{\ca},\ca,\nu|_{\ca})$ which is not mixing such that
$\tfs|_{\cb\otimes\ca}$ is isomorphic to $T\otimes U$.
\end{Cor}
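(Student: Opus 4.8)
The plan is to mirror the structure of Corollary~\ref{compl2}, replacing the mild mixing analysis by the mixing analysis already carried out in Proposition~\ref{mix1} and in the proof of Proposition~\ref{compl1}. The key observation is that Proposition~\ref{compl1} only used that $T$ is weakly mixing, so it applies verbatim when $T$ is mixing: there is an automorphism $U$ of $(Y|_{\ca_{\svf}},\ca_{\svf},\nu|_{\ca_{\svf}})$ such that $\tfs|_{\cb\otimes\ca_{\svf}}$ is isomorphic to $T\times U$. The only missing ingredient compared with the mild mixing case is an analogue of Remark~\ref{remlmm3}, namely that when $T$ is mixing, every ``non-mixing part'' of $\tfs$ already lives inside $\cb\otimes\ca_{\svf}$.

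First I would record this analogue. Suppose $T$ is mixing but $\tfs$ is not. Then there exist $f\in L^2_0(X\times Y,\mu\ot\nu)$ and a sequence $n_j\to\infty$ such that $\langle (\tfs)^{n_j}f,f\rangle\not\to0$. Decomposing $f$ along the $\tfs$-invariant subspaces $L^2\xbm\ot G(g)$ (using an orthonormal base $\{g_m\}$ of $L^2_0\ycn$), some component $L^2\xbm\ot G(g)$ must carry a non-mixing part; by Lemma~\ref{zima3} this forces a positive submeasure of $\si_{\vhf}$ (for $\si_{g,\cs}$-a.e.\ $\chi$ in a set of positive $\si_{g,\cs}$-measure) to be Dirichlet-type in the sense that $\widehat{\si}(g_{n_j})\not\to0$. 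As in the proof of Proposition~\ref{mix1} in \cite{Le-Pa}, the mixing of $T$ then rules out any contribution from $\chi\notin\svf$, so $\si_{g,\cs}(\svf)>0$. Since $g$ was arbitrary, the non-mixing part of $\tfs$ is contained in $L^2\xbm\ot H_{\svf}=L^2(X\times Y,\cb\ot\ca_{\svf},\mu\ot\nu)$; in other words $\tfs$ is mixing if and only if $\tfs|_{\cb\ot\ca_{\svf}}$ is.

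With this in hand the corollary follows quickly. If $\tfs$ is not mixing, then by the previous paragraph $\tfs|_{\cb\ot\ca_{\svf}}$ is not mixing, and in particular $\si_\cs(\svf)>0$, so $\ca_{\svf}$ is a non-trivial factor of $\cs$. Applying Proposition~\ref{compl1} (valid since mixing implies weak mixing) we get an automorphism $U$ of $(Y|_{\ca_{\svf}},\ca_{\svf},\nu|_{\ca_{\svf}})$ with $\tfs|_{\cb\ot\ca_{\svf}}$ isomorphic to $T\times U$. Since $T\times U$ is not mixing and $T$ is mixing, $U$ cannot be mixing, so $\ca=\ca_{\svf}$ is the required factor. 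The converse is immediate: if $\tfs|_{\cb\ot\ca}$ is isomorphic to $T\times U$ with $U$ not mixing, then $\tfs$ has a non-mixing factor, hence is not mixing.

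The step I expect to be the main obstacle is the first paragraph above, i.e.\ extracting cleanly the statement that the non-mixing part of $\tfs$ sits inside $\cb\ot\ca_{\svf}$. In the mild mixing case (Remark~\ref{remlmm3}) this came from the Dirichlet-measure characterisation of rigid factors together with Lemma~\ref{lmm1}; for mixing one instead needs to re-run the mixing-sequence argument of Proposition~\ref{lifttomix} and Corollary~\ref{lm2} fibrewise over $\chi$, exactly as done in \cite{Le-Pa} to prove Proposition~\ref{mix1}. One must be slightly careful that ``$\tfs$ not mixing'' really does localise to a single cyclic subspace $L^2\xbm\ot G(g)$ and that the resulting bad set of characters meets $\svf$ in positive $\si_\cs$-measure; both points are handled by the integral formula of Lemma~\ref{zima3} and the fact, used repeatedly above, that $\chi\in\svf$ exactly when $\chi\circ\va$ is a quasi-coboundary, equivalently when the weighted operator $\vhf$ has a non-mixing part whenever $T$ is mixing.
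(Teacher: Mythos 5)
Your proposal is correct and follows essentially the same route as the paper: the paper also localizes the non-mixing part of $\tfs$ to $\cb\otimes\ca_{\svf}$ by invoking the proof of Proposition~\ref{mix1} (Theorem~7.1 of \cite{Le-Pa}), which gives that $\widehat{\sigma}_{f\otimes h,\tfs}(n)\to0$ whenever $\sigma_{h,\cs}(\svf)=0$, and then applies Proposition~\ref{compl1} and concludes exactly as in Corollary~\ref{compl2}. Your ``analogue of Remark~\ref{remlmm3}'' is precisely this localization step, so the two arguments coincide.
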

\begin{proof}The proof of Proposition~\ref{mix1} in \cite{Le-Pa} (Theorem 7.1) shows
actually that if $T$ is mixing and $\sigma_{h,{\cal S}}(\svf)=0$,
then $\hat\sigma_{f\otimes h,\tfs}(n)\to 0$ for each $f\in
L^2\xbm$. Therefore if $\tfs$ is not mixing, we must have a
function $F\in L^2_0(X,\cb,\mu)\ot H_{\svf}$ whose spectral
measure does not vanish at infinity, whence
$\tfs|_{\cb\ot{\ca_{\svf}}}$ is not mixing. Then we can apply
Proposition~\ref{compl1} and the result follows exactly as for
Corollary~\ref{compl2}.
\end{proof}

\begin{Prop}[\cite{Le-Pa}]\label{mix2}
Assume that $T$ is $r$-fold mixing and that $\va$ is recurrent. If
$\tfs$ is mildly mixing then it is also $r$-fold mixing.
\bez\end{Prop}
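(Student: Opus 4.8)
The plan is to derive the multiple mixing lift from the single-variable result Proposition~\ref{mix1} via a disjointness argument, exactly in the spirit of the paper's use of saturated groups. Since $T$ is $r$-fold mixing and $\va$ is recurrent, and $\tfs$ is mildly mixing, Proposition~\ref{compl3} (second assertion) applies: recurrence of $\va$ forces $\si_\cs(\svf)=0$, and then by Proposition~\ref{mix1} the automorphism $\tfs$ is itself mixing. So the real content is to upgrade from mixing of $\tfs$ to $r$-fold mixing, knowing that $T$ is $r$-fold mixing and that the cocycle sits over an $r$-fold mixing base.

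First I would set up the multiple-mixing question in terms of the $r$-fold self-joinings along a mixing sequence. Fix functions $F_0,\dots,F_r\in L^2(X\times Y,\mu\ot\nu)$ and a sequence of integer gaps $n_1,\dots,n_r\to\infty$; one wants
$$\int F_0\cdot F_1\circ\tfs^{n_1}\cdots F_r\circ\tfs^{n_1+\cdots+n_r}\,d(\mu\ot\nu)\longrightarrow\prod_{i=0}^r\int F_i\,d(\mu\ot\nu).$$
By density it suffices to treat $F_i=f_i\ot g_i$ with $f_i\in L^2\xbm$, $g_i\in L^2\ycn$, and by splitting off the mean one may further assume each $g_i\in L^2_0\ycn$, even each $g_i\in G(g_i)$ lying in a single cyclic space, so that $g_i$ corresponds under the spectral isomorphism to $1_{\widehat G}\in L^2(\widehat G,\si_{g_i,\cs})$. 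Unwinding the $Y$-integral using the cocycle identity $\va^{(n)}$ turns the inner integral over $Y$ into an integral against the product measure $\si_{g_1,\cs}\ot\cdots\ot\si_{g_r,\cs}$ of characters evaluated at the partial cocycle sums, and the whole expression becomes
$$\int_{\widehat G^r}\Big(\int_X f_0\cdot(\chi_1\!\cdot\!\chi_2\cdots\chi_r)(\varphi^{(n_1)})\,f_1\circ T^{n_1}\cdots\Big)\,d\mu\; d\si_{g_1}(\chi_1)\cdots d\si_{g_r}(\chi_r),$$
i.e. an average over $(\chi_1,\dots,\chi_r)$ of the $r$-fold mixing integral for the weighted operators $V_{\chi\circ\va}$ along the base $T$. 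This is precisely the structure already used in \cite{Le-Pa} for $r=1$ (the proof of Proposition~\ref{mix1}/Theorem 7.1 cited in Corollary~\ref{compl4}).

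Next I would invoke the mechanism of \cite{Ru2}-type arguments that is implicit in \cite{Le-Pa}: for $\si_\cs^{\ot r}$-almost every tuple of characters the relevant product cocycle $(\chi_1\cdots\chi_r)\circ\va$ is not a quasi-coboundary — this is exactly where $\si_\cs(\svf)=0$ enters, since $\svf$ is a saturated group (Remark~\ref{sigmafsat}) and, arguing as in Lemma~\ref{l2} and in the weak-mixing computations, the set of tuples whose product lands in $\svf$ has $\si_\cs^{\ot r}$-measure zero once $\si_\cs(\svf)=0$ and $\cs$ has the requisite ergodicity. For such a tuple, Corollary~\ref{lm2} (and its $r$-fold extension, via the relatively-independent joining over $T$ and ergodicity of $T\times\cdots\times T$ which holds since $T$ is $r$-fold mixing hence weakly mixing) gives that the weighted $r$-fold correlations along a mixing sequence for $T$ tend to the product of the integrals — and since $g_i\in L^2_0$ these products vanish. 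Then a dominated-convergence argument over $\widehat G^r$ (the integrands are uniformly bounded by $\prod\|f_i\|_\infty$, say, after a further density reduction to bounded $f_i$) yields that the full $r$-fold correlation for $\tfs$ tends to $0$, which is what multiple mixing of $\tfs$ demands for $g_i\in L^2_0\ycn$.

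The main obstacle, and the place I would spend most care, is the passage from single weighted operators to the $r$-fold weighted correlations: one needs a genuine $r$-fold analogue of Proposition~\ref{lifttomix}/Corollary~\ref{lm2}, controlling $\int f_0\cdot\prod_{k=1}^r\xi_k^{(m_k)}\,f_k\circ T^{m_k}\,d\mu$ where the $\xi_k$ are successive products of $\chi_i\circ\va$. This is exactly the delicate step carried out in \cite{Le-Pa} for the mixing case; the proof there proceeds by lifting the base-$T$ $r$-fold mixing to the tower of Anzai skew products $T_{\xi}$ and reading off that the limit joining, restricted to the circle coordinates, forces a coboundary relation unless the correlation vanishes. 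I would reuse that argument essentially verbatim, the only new input being that $\tfs$ is now assumed mildly mixing (rather than needing a recurrence hypothesis directly), which via Proposition~\ref{compl3} supplies $\si_\cs(\svf)=0$ and hence the needed ``not a quasi-coboundary'' property for $\si_\cs^{\ot r}$-a.e. tuple of characters. Once that weighted $r$-fold mixing statement is in hand, the dominated convergence over $\widehat G^r$ and the reduction to elementary tensors are routine.
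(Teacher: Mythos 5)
Your opening reduction is correct and matches the logic of the surrounding text: mild mixing of $\tfs$ together with recurrence of $\va$ gives $\si_{\cs}(\svf)=0$ by the second assertion of Proposition~\ref{compl3}, and then Proposition~\ref{mix1} yields (2-fold) mixing of $\tfs$. The proof collapses, however, at the step where you ``unwind the $Y$-integral'': you assert that the inner correlation $\int_Y g_0\cdot\prod_{k=1}^r g_k\circ S_{h_k}\,d\nu$ can be written as an integral of $\prod_{k}\chi_k(h_k)$ against the product measure $\si_{g_1,\cs}\ot\cdots\ot\si_{g_r,\cs}$. That identity holds only for $r=1$ (and there against the joint spectral measure of the pair $g_0,g_1$, which is how Lemma~\ref{l1} and the proof of Proposition~\ref{mix1} work). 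For $r\geq2$ the multiple correlations of a measure-preserving $G$-action are \emph{not} determined by the spectral measures of the individual functions; this is precisely the obstruction behind Rokhlin's higher-order mixing problem, and if such a disintegration were available, 2-fold mixing would trivially imply $r$-fold mixing. So the entire reduction of $r$-fold mixing of $\tfs$ to an average over $\widehat{G}^r$ of weighted correlations of the operators $V_{\chi\circ\va}$ does not get off the ground.

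Even granting that reduction, the step you identify as the real work --- an $r$-fold analogue of Proposition~\ref{lifttomix}/Corollary~\ref{lm2} --- is neither stated precisely nor proved; you propose to ``reuse essentially verbatim'' the argument of \cite{Le-Pa}. Note that the paper itself offers no proof of Proposition~\ref{mix2} beyond citing \cite{Le-Pa}, so a blind proof has to actually supply that argument rather than point back to the same reference. The proof in \cite{Le-Pa} is of a different nature: one passes to a weak limit of the off-diagonal $(r+1)$-fold self-joinings of $\tfs$ along the given sequence, uses $r$-fold mixing of $T$ to identify the projection of that limit onto the base $X^{r+1}$ with product measure, and then uses mild mixing of $\tfs$ (absence of non-trivial rigid factors, cf.~(\ref{rigidfac})) to force the limit joining to be product measure globally --- no spectral disintegration of the $Y$-correlations is involved. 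As written, your argument has a genuine gap at its central computation and defers its only other substantive step to the source being reproved.
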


Now, the corollary below directly follows from
Proposition~\ref{mix2} and Proposition~\ref{compl3}.

\begin{Cor}\label{mix3}
Assume that $T$ is $r$-fold mixing and $\va$ is recurrent. Then
$\tfs$ is $r$-fold mixing if and only if $\sigma_{\cs}(\svf)=0$.
\bez\end{Cor}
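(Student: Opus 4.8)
Corollary \ref{mix3} follows by combining the two results cited just above it, with no genuinely new work. The plan is to verify that the hypotheses of Proposition~\ref{mix2} and Proposition~\ref{compl3} are met and to chain the implications, being careful about the role of the recurrence assumption on $\va$.

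First I would argue the direction ``$\sigma_{\cs}(\svf)=0\Rightarrow\tfs$ is $r$-fold mixing''. Since $T$ is $r$-fold mixing it is in particular mildly mixing. We are assuming $\sigma_{\cs}(\svf)=0$, so by Proposition~\ref{lmm3} (mild mixing lifts) $\tfs$ is mildly mixing. Now $T$ is $r$-fold mixing and $\va$ is recurrent by hypothesis, so Proposition~\ref{mix2} applies and gives that $\tfs$ is $r$-fold mixing. This uses recurrence of $\va$ only through Proposition~\ref{mix2}.

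For the converse, suppose $\tfs$ is $r$-fold mixing. Then in particular $\tfs$ is mixing, hence mildly mixing. Since $T$ is mildly mixing and $\va$ is recurrent, the ``in particular'' clause of Proposition~\ref{compl3} yields $\sigma_{\cs}(\svf)=0$. (Alternatively, one may note directly: $r$-fold mixing implies $2$-fold mixing, i.e.\ mixing, and then invoke the converse half of Proposition~\ref{mix1}, which states that if $\sigma_{\cs}(\svf)>0$ and $\va$ is recurrent then $\tfs$ is not mixing, contradicting mixing of $\tfs$; this gives $\sigma_{\cs}(\svf)=0$ as well.) Either route closes the equivalence.

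The only thing to watch is bookkeeping of where recurrence of $\va$ is actually needed: it enters in the forward direction via Proposition~\ref{mix2} and in the backward direction via Proposition~\ref{compl3} (or Proposition~\ref{mix1}); the implication $\sigma_{\cs}(\svf)=0\Rightarrow$ mild mixing of $\tfs$ in Proposition~\ref{lmm3} does not require it. There is no real obstacle here — the corollary is a clean packaging statement — so the ``proof'' is essentially the two-line citation already indicated in the text, and I would simply write it out as above.
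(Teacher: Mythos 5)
Your proof is correct and matches the paper's route: the paper gives no written proof beyond the remark that the corollary "directly follows from Proposition~\ref{mix2} and Proposition~\ref{compl3}," and your chaining (mild mixing lifts when $\sigma_{\cs}(\svf)=0$, then Proposition~\ref{mix2} upgrades to $r$-fold mixing; conversely $r$-fold mixing implies mild mixing and the recurrence of $\va$ forces $\sigma_{\cs}(\svf)=0$ via Proposition~\ref{compl3}) is exactly the intended argument. Your bookkeeping of where recurrence is used is also accurate.
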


\scriptsize

\vspace{5mm}

\noindent Mariusz Lema\'nczyk\\
Faculty of Mathematics and Computer Science\\
Nicolaus Copernicus University\\
12/18 Chopin street, 87--100 Toru\'n,
Poland\\
mlem@mat.uni.torun.pl

\vspace{2mm}

\noindent Fran\c{c}ois Parreau\\
Laboratoire d'Analyse, G\'eom\'etrie
et Applications, UMR 7539\\
Universit\'e Paris 13 et CNRS\\
99, av. J.-B.\ Cl\'ement, 93430 Villetaneuse, France\\
parreau@math.univ-paris13.fr

\end{document}